\theoremstyle{plain}
\newtheorem{theorem}{Theorem}[section]
\newtheorem{corollary}{Corollary}[section]
\newtheorem{lemma}{Lemma}[section]
\newtheorem{remark}{\bf Remark}[section]
\theoremstyle{definition}
\newcommand{\Tr}{\mathop\mathrm{Tr}}
\title
[Magnetic interpolation inequalities ]
 {One-dimensional interpolation inequalities,  Carlson--Landau
  inequalities and magnetic Schr\"odinger operators }
\author[A.Ilyin, A.Laptev, M.Loss, S.Zelik] {Alexei  Ilyin,
 Ari Laptev, Michael Loss, Sergey Zelik}
\subjclass[2000]{26D10, 35P15, 46E35}
\keywords{Sobolev inequality, interpolation inequalities,
 Carlson inequality, Lieb--Thirring inequalities.}
\address
{\noindent\newline  Keldysh Institute of Applied Mathematics and
Institute for Information Transmission Problems;\newline
Imperial College London and Institute Mittag--Leffler;
\newline School of Mathematics, Georgia Institute of
Technology;
\newline University of Surrey, Department of Mathematics and
Keldysh Institute of Applied Mathematics}
\email{ ilyin@keldysh.ru; a.laptev@imperial.ac.uk;
\newline loss@math.gatech.edu;
 s.zelik@surrey.ac.uk}
\begin{document}

\maketitle

\medskip

\bigskip
\begin{quote}
{\normalfont\fontsize{8}{10}
\selectfont{\bfseries  Abstract.}
In this paper we prove refined
 first-order
 interpolation inequalities for periodic functions
and give applications to
 various refinements  of the  Carlson--Landau-type inequalities
and  to magnetic Schr\"odinger operators. We also obtain Lieb-Thirring 
inequalities for magnetic Schr\"odinger operators on multi-dimensional cylinders.
}
\end{quote}

\setcounter{equation}{0}
\section{Introduction}
\label{sec1}

The Carlson inequality~\cite{Carl}
\begin{equation}\label{Carlorig}
\biggl(\,\sum_{k=1}^\infty a_k\biggr)^2\leqslant
\pi\biggl(\,\sum_{k=1}^\infty a_k^2\biggr)^{1/2}
\biggl(\,\sum_{k=1}^\infty k^2a_k^2\biggr)^{1/2}
\end{equation}
has been a source of many improvements, refinements and
generalizations (see~\cite{HLP},\cite{Carl_book} and the
references therein). The constant $\pi$ here is sharp and
the inequality is strict unless $\{a_k\}_{k=1}^\infty\equiv0$.

This inequality and its various generalizations are closely
connected with classical one-dimensional interpolation inequalities
for Sobolev spaces:
\begin{equation}\label{Sob}
\|u\|_\infty^2\le \mathrm{C}(m)\|u\|^{2\theta}\|u^{(m)}\|^{2(1-\theta)},
\quad\theta=1-\frac1{2m},\ m>\frac12.
\end{equation}

In the case when $x\in\mathbb{R}$ the sharp constant and the
corresponding extremals
 were found in \cite{Taikov}:
$$
 \mathrm{C}(m)=\frac1
{\theta^\theta(1-\theta)^{1-\theta}2m\sin\frac\pi{2m}}.
$$

In the periodic case $x\in(0,2\pi)$ with zero average condition the inequality holds with the same constant
(without extremal functions)~\cite{I98JLMS}. Furthermore, the first-order
inequality with $\mathrm{C}(1)=1$ is equivalent
(as was first observed  in~\cite{Hardy})
to~\eqref{Carlorig} by going over from $\{a_k\}_{k=1}^\infty$ to
$u(x)=\sum_{k\in\mathbb{Z}_0}a_{|k|}e^{ikx}$
and using Parseval's equality.
Here in what follows $\|\cdot\|_\infty:=\|\cdot\|_{L_\infty}$,
$\|\cdot\|:=\|\cdot\|_{L_2}$, and
 $\mathbb{Z}_0:=\mathbb{Z}\setminus\{0\}$.

For all $m>1/2$ inequality  admits a
negative correction term on the right-hand side~\cite{Zelik}, \cite{IZ},
in particular, in the first- and second-order cases
the correction term can be written in closed form
\begin{eqnarray}
\|u\|_{\infty}^2 &\le & \|u\|\,\|u'\|-\frac{1}{\pi}\|u\|^2, \label{rem}\\
\|u\|_{\infty}^2 &\le& \frac{\sqrt{2}}{\sqrt[4]{27}}|u\|\,\|u'\|
-\frac{2}{3\pi}\|u\|^2,\label{rem1}
\end{eqnarray}
where all constants are sharp and no extremals exist.
Again, for $u(x)=\sum_{k\in\mathbb{Z}_0}a_{|k|}e^{ikx}$
this gives the following two improved  Carlson inequalities
\begin{eqnarray}
\biggl(\,\sum_{k=1}^\infty a_k\biggr)^2 &\le & \pi\biggl(\,\sum_{k=1}^\infty a_k^2\biggr)^{1/2}
\biggl(\,\sum_{k=1}^\infty k^2a_k^2\biggr)^{1/2}
-\sum_{k=1}^\infty a_k^2, \label{Carl}\\
\biggl(\,\sum_{k=1}^\infty a_k\biggr)^2  &\le&
\frac{\sqrt{2}\pi}{\sqrt[4]{27}}\biggl(\,\sum_{k=1}^\infty a_k^2\biggr)^{3/4}
\biggl(\,\sum_{k=1}^\infty k^4a_k^2\biggr)^{1/4}
-\frac{2}{3}\sum_{k=1}^\infty a_k^2,\label{Carl3}
\end{eqnarray}
with sharp constants.
These inequalities are proved in \cite{Zelik},\cite{IZ}
in the framework of a rather general theory and we give
below in \S\,\ref{sec2}  a new direct self-contained proof
of~\eqref{rem} and \eqref{Carl}.

In \S\,\ref{sec3} we consider inequalities of the form
\begin{equation}\label{magn2}
\|u\|^2_\infty\le K(\alpha)\|A^{1/2}u\|\|u\|
\end{equation}
for $2\pi$-periodic functions (no zero average condition),
where
$$
\|A^{1/2}u\|^2=\int_0^{2\pi}
\left|i\,\frac{ du}{dx}-a(x) u\right|^2dx
$$
is the quadratic form corresponding to the Schr\"odinger
operator
$$
Au=\left(i\,\frac{ du}{dx}-a(x) u\right)^2
$$
with  magnetic potential $a\in L_1(0,2\pi)$.
The sharp constant $K(\alpha)$
depends only on the flux
$$
\alpha:=\frac1{2\pi}\int_0^{2\pi}a(x)dx,
$$
it is finite  if and only if $\alpha\notin\mathbb{Z}$.
In a  somewhat similar situation considered in \cite{Lap-Weid-Mag}
the introduction of a magnetic field has made it possible to
prove the Hardy inequality in $\mathbb{R}^2$. In our periodic case
a magnetic field with non-integral flux removes the condition
$\int_0^{2\pi}u(x)dx=0$.

The expression for $K(\alpha)$  is as follows
\begin{equation}\label{Kalpha1}
K(\alpha)=\left\{
            \begin{array}{ll}
            {|\sin(2\pi\alpha)|}^{-1}, & \hbox{$\alpha\operatorname{mod}(1)\in(0,1/4)\cup(3/4,1)$;} \\
              1, & \hbox{$\alpha\operatorname{mod}(1)\in[1/4,3/4]$.}
            \end{array}
          \right.
\end{equation}
In the first case there exists a unique extremal function
and for $\alpha\in[1/4,3/4]$ there are no extremals and a
negative correction term
may exist. We show in \S\,\ref{sec4}  that this is indeed the case and
\begin{equation}\label{magneticcor}
\aligned
\|u\|^2_\infty&\le\|A^{1/2}u\|\|u\|
\bigl(1-2e^{-4\pi\|A^{1/2}u\|/\|u\|}\bigr),\ \alpha=1/4,\
\alpha=3/4,\\
\|u\|^2_\infty&\le\|A^{1/2}u\|\|u\|
\bigl(1+2\cos(2\pi\alpha)e^{-2\pi\|A^{1/2}u\|/\|u\|}\bigr),\
\alpha\in(1/4,3/4).
\endaligned
\end{equation}

In \S\,\ref{sec5} we consider applications to  Carlson--Landau
inequalities.
The  Landau improvement
of~\eqref{Carlorig}
(see, for instance, ~\cite{HLP})
\begin{equation}
\label{Landau}
\biggl(\,\sum_{k=1}^\infty a_k\biggr)^2\leqslant
\pi\biggl(\,\sum_{k=1}^\infty a_k^2\biggr)^{1/2}
\biggl(\,\sum_{k=1}^\infty (k-1/2)^2a_k^2\biggr)^{1/2}
\end{equation}
 has a surprisingly short (and almost elementary)
proof in terms of our  interpolation inequalities. We recall
the elementary inequality (which is~\eqref{Sob} with $m=1$)
\begin{equation}\label{inter}
\|u\|_\infty^2\le \|u\|\|u'\|,
\qquad u\in H^1_0(0,L),
\end{equation}
following from
$$
2u(x)^2=\int_0^x(u(t)^2)'dt-\int_x^L(u(t)^2)'dt\le 2\|u\|\|u'\|.
$$
Given a (non-negative) sequence $\{a_k\}_{k=1}^\infty$ we
 set $L=1$ and consider the function
\begin{equation}\label{u(x)}
u(x)=\sqrt{2}\sum_{k=1}^\infty
(-1)^{k+1}a_k\sin(2k-1)\pi x,\quad x\in [0,1].
\end{equation}
We  have $\|u\|_\infty=u(\pi/2)=\sqrt{2}\sum_{k=1}^\infty
a_k$ and by orthonormality,
\begin{equation}\label{Parceval}
\|u\|^2=\sum_{k=1}^\infty a_k^2,
\qquad \|u'\|^2=\pi^2\sum_{k=1}^\infty(2k-1)^2 a_k^2=
4\pi^2\sum_{k=1}^\infty(k-1/2)^2 a_k^2.
\end{equation}
Substituting this into
\eqref{inter}  we obtain
inequality~\eqref{Landau}.

The refinement of~\eqref{inter} obtained in \cite{IZ}
\begin{equation}\label{inter1}
\|u\|_\infty^2\le \|u\|\|u'\|\bigl(1-2e^{-L\|u'\|/\|u\|}\bigr),
\qquad u\in H^1_0(0,L)
\end{equation}
or, equivalently, inequality~\eqref{magneticcor} in the symmetric case
 $\alpha=1/2$
give a sharp  correction term to the Carlson--Landau inequality
\eqref{Landau}
\begin{equation}\label{Expcorr}
\biggl(\,\sum_{k=1}^\infty a_k\biggr)^2\le
\pi\|a\|\|a\|_1\bigl(1-2e^{-2\pi\|a\|_1/\|a\|}\bigr).
\end{equation}
Next, using a second-order inequality in~\cite{IZ}
we obtain the following sharp inequality
$$
\biggl(\,\sum_{k=1}^\infty a_k\biggr)^2\le
\frac{\sqrt{2}\,\pi}{\sqrt[4]{27}}\coth(\pi/2)
\|a\|^{3/2}
\|a\|_2^{1/2}
$$
with unique extremal $a_k=1/(2k-1)^4+4)$.
Here we set for brevity

\begin{equation}\label{notation}
\|a\|^2=\sum_{k=1}^\infty a_k^2,\quad
\|a\|^2_1=\sum_{k=1}^\infty (k-1/2)^2a_k^2,\quad
\|a\|^2_2=\sum_{k=1}^\infty (k-1/2)^4a_k^2.
\end{equation}

The whole family of Carlson--Landau inequalities
\begin{equation}\label{interCLalpha}
\left(\sum_{k=1}^\infty a_k\right)^2\le k(\alpha)\left(\sum_{k=1}^\infty a_k^2\right)^{1/2}\left(\sum_{k=1}^\infty
(k-\alpha)^2a_k^2\right)^{1/2},
\end{equation}
is studied for $\alpha\in[0,1)$ in Theorem~\ref{Th:Intermediate}.
Obviously, $k(\alpha)=\pi$ for $\alpha\in[0,1/2]$ and, furthermore,
for $\alpha\in[0,1/2)$ we have a sharp $L_2$-type correction term here,
see~\eqref{interCL}.
In the symmetric case $\alpha=1/2$ the correction term is exponentially
small, see~\eqref{Expcorr}. For $\alpha\in(1/2,1)$ we show
that $k(\alpha)>\pi$, moreover, $k(\alpha)\sim(1-\alpha)^{-1}$
as $\alpha\to1^-$,
and there exists a unique extremal.

Finally, in \S\,\ref{sec6} we consider applications to the Lieb--Thirring
inequalities and first  give a new alternative proof
of the main result in~\cite{D-L-L} on the one-dimensional
Sobolev inequalities for orthonormal families of vector-functions
 along with generalizations to higher-order derivatives and
 1-D magnetic forms. This gives the Lieb--Thirring
estimate for the negative trace of a 1-D magnetic Schr\"odinger
operator with a matrix-valued potential.
Then we combine this result with the main ideas and results
in~\cite{AisLieb}, \cite{D-L-L}, \cite{H-L-W}, and \cite{Lap-Weid} to obtain in Theorem~\ref{Th gamma}
estimates for the $1/2$- and $1$- moments
of the negative eigenvalues of the Schr\"odinger-type
operator in $\mathbb{T}^{d_1}_x\times\mathbb{R}^{d_2}_y$.
For example, for $d_1=d_2=1$ and the operator
$$
\mathcal{H}\Psi=-\frac{d^2}{dy^2}\Psi+
\left(i\,\frac{ d}{dx}-a(x)\right)^2 \Psi-V(x,y)\Psi=-\lambda\Psi
$$
on the cylinder $\mathbb{R}_y\times\mathbb{S}^1_x$ we have the following estimates for its negative eigenvalues:
\begin{equation}\label{1/2-mom}
\aligned
\sum_{k}\lambda_k^{1/2}&\le
\frac1{3\sqrt{3}}K(\alpha)\int_{\mathbb{R}\times\mathbb{S}^1}
V^{3/2}(x,y)dydx,\\
\sum_{k}\lambda_k&\le
\frac1{8\sqrt{3}}K(\alpha)\int_{\mathbb{R}\times\mathbb{S}^1}
V^{2}(x,y)dydx.
\endaligned
\end{equation}

For $d_1=2$, $d_2=0$ and the operator
$$
\mathcal{H}\Psi=\left(i\,\frac{ d}{dx_1}-a_1(x_1)\right)^2\Psi+
\left(i\,\frac{ d}{dx_2}-a_2(x_2)\right)^2 \Psi-V(x_1,x_2)\Psi=-\lambda\Psi
$$
on the torus $\mathbb{T}^2$
with $\alpha_j=\frac1{2\pi}\int_0^{2\pi}a(x_j)dx\notin\mathbb{Z}$, $j=1,2$ we have
\begin{equation}\label{torus2}
\sum_{k}\lambda_k\le
\frac{\pi}{24}\,K(\alpha_1)K(\alpha_2)\int_{\mathbb{T}^2}
V^{2}(x_1,x_2)dx_1dx_2.
\end{equation}

Note  that in the region  where
$K(\alpha)=1$, the constants in \eqref{1/2-mom}
coincide with the best-known constants in the corresponding
Lieb--Thirring inequalities for the
Schrodinger operator in $\mathbb{R}^2$,
see \cite{D-L-L}, \cite{H-L-W}. However, the constant in
\eqref{torus2} contains an extra factor $\pi/\sqrt{3}$,
since when we apply ``the lifting argument with respect
 to dimensions'' \cite{Lap-Weid} in the direction $x$, we do not have semiclassical estimates for the $\gamma$-Riesz means with $\gamma\ge 3/2$ for the negative eigenvalues
in the periodic case. This factor along with $K(\alpha_j)$ accumulates with each
iteration of the lifting procedure with respect to the
$x$-variables, see Theorem~\ref{Th gamma}.

\section{Proof of first-order inequality}\label{sec2}

We consider the following maximization
problem for $2\pi$-periodic functions with zero average: for $D\ge1$  find $\mathbb{V}(D)$ -- the solution of
the following extremal problem
\begin{equation}
\label{V}
\mathbb V(D):=\sup\bigl\{|u(0)|^2, \
\|u\|^2=1, \ \|u'\|^2=D\bigr\}.
\end{equation}

The next lemma  gives an implicit formula for
the function $\mathbb V(D)$.
\begin{lemma}\label{L:impl} The following expression holds for $\mathbb V(D)$:
\begin{equation}
\label{Vexp}
\mathbb V(D)=\frac1{2\pi}\,
\frac{(\sum_{k\in\mathbb{Z}_0}\frac1{\lambda+k^2})^2}
{\sum_{k\in\mathbb{Z}_0}\frac1{(\lambda+k^2)^2}}\,,
\end{equation}
where $\lambda=\lambda(D)$ is a unique solution
of the functional equation
\begin{equation}\label{eqforlam}
\frac{\sum_{k\in\mathbb{Z}_0}\frac{k^2}{(\lambda+k^2)^2}}
{\sum_{k\in\mathbb{Z}_0}\frac1{(\lambda+k^2)^2}}=D\ (=:D(\lambda)).
\end{equation}
Furthermore, $\lambda(1)=-1$ and $\lambda(\infty)=\infty$.

\end{lemma}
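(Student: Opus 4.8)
The plan is to pass to Fourier series and reduce \eqref{V} to a constrained maximization over sequences, which can be solved exactly by a one‑parameter Cauchy--Schwarz argument. Writing a zero‑average $2\pi$‑periodic $u$ as $u(x)=\sum_{k\in\mathbb{Z}_0}c_ke^{ikx}$, Parseval gives $\|u\|^2=2\pi\sum_{k\in\mathbb{Z}_0}|c_k|^2$, $\|u'\|^2=2\pi\sum_{k\in\mathbb{Z}_0}k^2|c_k|^2$ and $u(0)=\sum_{k\in\mathbb{Z}_0}c_k$, so \eqref{V} becomes: maximize $\bigl|\sum_{k\in\mathbb{Z}_0}c_k\bigr|^2$ subject to $\sum_{k\in\mathbb{Z}_0}|c_k|^2=\tfrac1{2\pi}$ and $\sum_{k\in\mathbb{Z}_0}k^2|c_k|^2=\tfrac D{2\pi}$. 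For any $\lambda>-1$ (so that $\lambda+k^2>0$ for all $k\in\mathbb{Z}_0$) I would apply Cauchy--Schwarz in the form
\[
|u(0)|^2=\Bigl|\sum_{k\in\mathbb{Z}_0}\frac1{(\lambda+k^2)^{1/2}}\cdot(\lambda+k^2)^{1/2}c_k\Bigr|^2\le\Bigl(\sum_{k\in\mathbb{Z}_0}\frac1{\lambda+k^2}\Bigr)\sum_{k\in\mathbb{Z}_0}(\lambda+k^2)|c_k|^2=\frac{\lambda+D}{2\pi}\sum_{k\in\mathbb{Z}_0}\frac1{\lambda+k^2},
\]
with equality precisely when $c_k=\mu(\lambda+k^2)^{-1}$ for a constant $\mu$. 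Thus $\mathbb{V}(D)\le F(\lambda,D):=\tfrac1{2\pi}(\lambda+D)\sum_{k\in\mathbb{Z}_0}(\lambda+k^2)^{-1}$ for every $\lambda>-1$.

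Next I would produce a matching extremal. For $\lambda>-1$ take $c_k=\mu(\lambda+k^2)^{-1}$; the function $u_\lambda(x)=\mu\sum_{k\in\mathbb{Z}_0}(\lambda+k^2)^{-1}e^{ikx}$ is real, even, has zero average, and lies in $H^1$ since $\sum_{k\in\mathbb{Z}_0}k^2(\lambda+k^2)^{-2}<\infty$; moreover
\[
\frac{\|u_\lambda'\|^2}{\|u_\lambda\|^2}=\frac{\sum_{k\in\mathbb{Z}_0}k^2(\lambda+k^2)^{-2}}{\sum_{k\in\mathbb{Z}_0}(\lambda+k^2)^{-2}}=:D(\lambda).
\]
If $\lambda$ solves \eqref{eqforlam} for the prescribed $D$, then after normalizing $\|u_\lambda\|=1$ this $u_\lambda$ is admissible in \eqref{V} and turns the Cauchy--Schwarz step into an equality, so $\mathbb{V}(D)=F(\lambda(D),D)$. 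Writing $S_0=\sum_{k\in\mathbb{Z}_0}(\lambda+k^2)^{-1}$, $S_1=\sum_{k\in\mathbb{Z}_0}(\lambda+k^2)^{-2}$, $S_2=\sum_{k\in\mathbb{Z}_0}k^2(\lambda+k^2)^{-2}$ and using the identity $\lambda S_1+S_2=S_0$, any root of \eqref{eqforlam} has $\lambda+D=S_0/S_1$, whence $F(\lambda(D),D)=\tfrac1{2\pi}S_0^2/S_1$, which is exactly \eqref{Vexp}. (As a sanity check, $\partial_\lambda F=\tfrac1{2\pi}(S_0-(\lambda+D)S_1)$, so \eqref{eqforlam} just says $\lambda(D)$ is the critical point of $F(\cdot,D)$; but this need not be invoked, since $\mathbb{V}(D)\le F(\lambda,D)$ for all $\lambda$ together with attainment at $\lambda(D)$ already forces equality.)

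The real content is therefore the solvability and uniqueness of \eqref{eqforlam}, i.e.\ that $D(\lambda)=S_2/S_1$ maps $(-1,\infty)$ strictly increasingly onto $(1,\infty)$. I would differentiate: with $S_0'=-S_1$, $S_1'=-2T$ where $T=\sum_{k\in\mathbb{Z}_0}(\lambda+k^2)^{-3}$, and the identities $\sum_{k\in\mathbb{Z}_0}k^2(\lambda+k^2)^{-3}=S_1-\lambda T$ and $\lambda S_1+S_2=S_0$, a short computation gives
\[
D'(\lambda)=\frac{2\bigl(S_0T-S_1^2\bigr)}{S_1^2}.
\]
Cauchy--Schwarz applied to $(\lambda+k^2)^{-1/2}$ and $(\lambda+k^2)^{-3/2}$ gives $S_1^2\le S_0T$, strictly because $\lambda+k^2$ is nonconstant in $k$, so $D'(\lambda)>0$. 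For the endpoints: as $\lambda\to-1^+$ the terms $k=\pm1$ dominate both $S_1$ and $S_2$, each contributing $(\lambda+1)^{-2}$, so $D(\lambda)\to1$ (consistently, $\mathbb V(1)=1/\pi$ is the Wirtinger equality case, and \eqref{Vexp} has the same limit); as $\lambda\to+\infty$, comparing the sums with $\int_{\mathbb R}(\lambda+t^2)^{-2}dt=\tfrac\pi2\lambda^{-3/2}$ and $\int_{\mathbb R}t^2(\lambda+t^2)^{-2}dt=\tfrac\pi2\lambda^{-1/2}$ gives $D(\lambda)\to\infty$. Hence for every $D>1$ there is a unique $\lambda(D)\in(-1,\infty)$ solving \eqref{eqforlam}, with $\lambda(1)=-1$ and $\lambda(\infty)=\infty$ as limits, which together with the two previous steps establishes \eqref{Vexp}. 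The only places requiring care are spotting the Cauchy--Schwarz bound $S_1^2\le S_0T$ that fixes the sign of $D'$, and the cancellation of the divergent $k=\pm1$ contributions in the ratio $S_2/S_1$ as $\lambda\to-1^+$; everything else is bookkeeping.
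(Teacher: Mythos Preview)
Your proof is correct and follows essentially the same route as the paper: Fourier series, the one-parameter Cauchy--Schwarz bound $|u(0)|^2\le\frac{\lambda+D}{2\pi}\sum_{k\in\mathbb{Z}_0}(\lambda+k^2)^{-1}$, identification of the extremal $c_k=\mu(\lambda+k^2)^{-1}$, and then the monotonicity of $D(\lambda)$ together with the endpoint limits. The one genuine difference is in how you establish $D'(\lambda)>0$: the paper expands $D'(\lambda)$ as a double sum and symmetrizes in $k,l$ to get $\sum_{k>l}(k^2-l^2)^2(\lambda+k^2)^{-3}(\lambda+l^2)^{-3}>0$, whereas you reduce it to $D'(\lambda)=2(S_0T-S_1^2)/S_1^2$ and invoke Cauchy--Schwarz on the pair $(\lambda+k^2)^{-1/2},(\lambda+k^2)^{-3/2}$ to obtain $S_1^2<S_0T$; your argument is a bit slicker and makes the strictness transparent, but the two are equivalent in spirit.
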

\begin{proof}
Using the Fourier series
$u(x)=\sum_{k\in\mathbb{Z}_0}u_k e^{ikx}$
and the Parseval equalities
$\|u\|^2=2\pi\sum_{k\in\mathbb{Z}_0}|u_k|^2$,
$\|u'\|^2=2\pi\sum_{k\in\mathbb{Z}_0}k^2|u_k|^2$,
for every $\lambda>-1$ we have by the Cauchy--Schwartz
inequality
\begin{equation}\label{freelambda}
\aligned
|u(0)|^2=\biggl|\sum_{k\in\mathbb{Z}_0}u_k\biggr|^2\le
\left(\sum_{k\in\mathbb{Z}_0}|u_k|\right)^2=\\=
\left(\sum_{k\in\mathbb{Z}_0}|u_k|(\lambda+k^2)^{1/2}(\lambda+k^2)^{-1/2}\right)^2\le\\\le
\sum_{k\in\mathbb{Z}_0}|u_k|^2(\lambda+k^2)
\sum_{k\in\mathbb{Z}_0}\frac1{\lambda+k^2}=\\=
\frac1{2\pi}(\lambda\|u\|^2+\|u'\|^2)
\sum_{k\in\mathbb{Z}_0}\frac1{\lambda+k^2}=\\=
\frac1{2\pi}\|u\|^2(\lambda+\|u'\|^2/\|u\|^2)
\sum_{k\in\mathbb{Z}_0}\frac1{\lambda+k^2}\,.
\endaligned
\end{equation}
Moreover, for (and only for)
$$
u_k=u_k^{(\lambda)}:=\mathrm{const}\frac1{\lambda+k^2}
$$
the above inequalities turn into equalities.
Fixing for definiteness $\mathrm{const}:=\frac1{2\pi}$ we consider the function
$
G_\lambda(x)=\frac1{2\pi}
\sum_{k\in\mathbb{Z}_0}\frac{e^{ikx}}{\lambda+k^2}\,,
$
for which
$$
\frac{\|(G_\lambda)'_x\|^2}{\|G_\lambda\|^2}=
\frac{\sum_{k\in\mathbb{Z}_0}\frac{k^2}{(\lambda+k^2)^2}}
{\sum_{k\in\mathbb{Z}_0}\frac1{(\lambda+k^2)^2}}
=:D(\lambda).
$$
(We also observe that $G_\lambda$ solves the
equation
$
-\frac{d^2}{dx^2}G_\lambda+\lambda G_\lambda=\delta
$.)

Then we see that for every fixed $\lambda>-1$
the normalized function
$
{G_\lambda(x)}/{\|G_\lambda\|}
$
is the extremal function in problem \eqref{V} with
$D=D(\lambda)$, and the value of the solution
function $\mathbb{V}(D)$ is
$$
\aligned
\mathbb{V}(D)=\frac1{2\pi}(\lambda+D(\lambda))
\sum_{k\in\mathbb{Z}_0}\frac1{\lambda+k^2}=\\=
\frac1{2\pi}\left(\lambda+\frac{\sum_{k\in\mathbb{Z}_0}\frac{k^2}{(\lambda+k^2)^2}}
{\sum_{k\in\mathbb{Z}_0}\frac1{(\lambda+k^2)^2}}\right)
\sum_{k\in\mathbb{Z}_0}\frac1{\lambda+k^2}=
\frac1{2\pi}\,
\frac{(\sum_{k\in\mathbb{Z}_0}\frac1{\lambda+k^2})^2}
{\sum_{k\in\mathbb{Z}_0}\frac1{(\lambda+k^2)^2}}\,.
\endaligned
$$

We now have  to show that there exists a unique
$\lambda=\lambda(D)$ solving~\eqref{eqforlam} for
every fixed $D\ge1$. We first observe that
$D(\lambda)\to1$ as $\lambda\to-1$ and $D(\lambda)\to\infty$
as $\lambda\to\infty$.
 It remains to show that $D(\lambda)$ is strictly monotone
increasing.
We have
$$
\aligned
\frac{d}{d\lambda}D(\lambda)=\frac2
{(\sum_{k\in\mathbb{Z}_0}\frac1{(\lambda+k^2)^2})^2}\cdot
\sum_{k,l\in\mathbb{Z}_0}
\frac{k^2(k^2-l^2)}{(\lambda+k^2)^3(\lambda+l^2)^3}=\\=
\frac2
{(\sum_{k\in\mathbb{Z}_0}\frac1{(\lambda+k^2)^2})^2}\cdot
\sum_{k,l\in\mathbb{Z}_0,k>l}
\frac{k^2(k^2-l^2)+l^2(l^2-k^2)}{(\lambda+k^2)^3(\lambda+l^2)^3}=\\=
\frac2
{(\sum_{k\in\mathbb{Z}_0}\frac1{(\lambda+k^2)^2})^2}\cdot
\sum_{k,l\in\mathbb{Z}_0,k>l}
\frac{(k^2-l^2)^2}{(\lambda+k^2)^3(\lambda+l^2)^3}\,>\,0.
\endaligned
$$
The proof is complete.
\end{proof}

We set
\begin{equation}\label{fgh}
G(\lambda):=G_\lambda(0)=\frac1{2\pi}
\sum_{k\in\mathbb{Z}_0}\frac1{\lambda+k^2}\,.
\end{equation}

The following variational characterization of
$\mathbb{V}(D)$ is important.
\begin{theorem}\label{T:Var}
For a fixed $D\ge1$
\begin{equation}\label{Var}
\mathbb{V}(D)=\min_{\lambda\in[-1,\infty)}(\lambda+D)G(\lambda).
\end{equation}
\end{theorem}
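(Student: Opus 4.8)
The plan is to establish the two inequalities $\mathbb V(D)\le(\lambda+D)G(\lambda)$ for every admissible $\lambda$, and $\mathbb V(D)\ge(\lambda(D)+D)G(\lambda(D))$, so that the minimum is attained exactly at $\lambda=\lambda(D)$ from Lemma~\ref{L:impl}. The first inequality is already contained in the chain~\eqref{freelambda}: for any fixed $\lambda>-1$ and any admissible $u$ (i.e.\ $\|u\|^2=1$, $\|u'\|^2=D$) we showed
$$
|u(0)|^2\le\frac1{2\pi}\|u\|^2\bigl(\lambda+\|u'\|^2/\|u\|^2\bigr)\sum_{k\in\mathbb Z_0}\frac1{\lambda+k^2}=(\lambda+D)G(\lambda).
$$
Taking the supremum over admissible $u$ gives $\mathbb V(D)\le(\lambda+D)G(\lambda)$, hence $\mathbb V(D)\le\inf_{\lambda>-1}(\lambda+D)G(\lambda)$. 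One then has to check the endpoint $\lambda=-1$ is harmless: as $\lambda\to-1^+$, $G(\lambda)=\frac1{2\pi}\sum_{k\in\mathbb Z_0}\frac1{\lambda+k^2}\to+\infty$ and $(\lambda+D)\to D-1\ge0$, so the product tends to $+\infty$ (or, when $D=1$, one argues directly); thus including $\lambda=-1$ in the range does not change the infimum, and one may write $\min_{\lambda\in[-1,\infty)}$ with the minimum genuinely attained in the interior for $D>1$.

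For the reverse inequality I would simply plug in the extremal. By Lemma~\ref{L:impl} there is a unique $\lambda=\lambda(D)$ solving~\eqref{eqforlam}, i.e.\ $D(\lambda(D))=D$, and the normalized function $G_{\lambda(D)}/\|G_{\lambda(D)}\|$ is admissible for problem~\eqref{V} with this value of $D$. Since that function turns all inequalities in~\eqref{freelambda} (written with $\lambda=\lambda(D)$) into equalities, we get
$$
\mathbb V(D)\ge\Bigl|\tfrac{G_{\lambda(D)}(0)}{\|G_{\lambda(D)}\|}\Bigr|^2=(\lambda(D)+D)G(\lambda(D)).
$$
Combining the two bounds yields $\mathbb V(D)=(\lambda(D)+D)G(\lambda(D))=\min_{\lambda\in[-1,\infty)}(\lambda+D)G(\lambda)$, which is~\eqref{Var}.

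The only real subtlety — the part I expect to need a little care — is the behaviour at the left endpoint $\lambda=-1$ and the assertion that the infimum over $(-1,\infty)$ is actually a minimum. For $D>1$ this follows because $(\lambda+D)G(\lambda)\to+\infty$ as $\lambda\to-1^+$ and as $\lambda\to+\infty$ (here $G(\lambda)\sim\frac1{2\pi}\cdot\frac{c}{\sqrt\lambda}$ while $\lambda+D\to\infty$, so one should double-check the product actually grows — in fact $\sum_{k\in\mathbb Z_0}\frac1{\lambda+k^2}=\frac{\pi\coth(\pi\sqrt\lambda)}{\sqrt\lambda}-\frac1\lambda$, so $(\lambda+D)G(\lambda)\sim\frac{\sqrt\lambda}{2}\to\infty$), so a minimizer exists in the interior; by strict convexity-type considerations or by the stationarity equation $\frac{d}{d\lambda}[(\lambda+D)G(\lambda)]=0$, which unwinds precisely to~\eqref{eqforlam} using $G'(\lambda)=-\frac1{2\pi}\sum\frac1{(\lambda+k^2)^2}$ and $G(\lambda)+( \lambda+D)G'(\lambda)=0\iff \lambda+D=-G(\lambda)/G'(\lambda)=D(\lambda)$, this critical point is the unique $\lambda(D)$. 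For $D=1$ one has $\lambda(1)=-1$ and the minimum sits at the endpoint, consistent with the statement. Thus the theorem reduces to the two one-line estimates above plus this endpoint bookkeeping.
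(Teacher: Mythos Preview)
Your proof is correct and follows essentially the same approach as the paper: both combine the Cauchy--Schwarz upper bound~\eqref{freelambda}, the extremal furnished by Lemma~\ref{L:impl}, and the first-order stationarity condition $D=-G(\lambda)/G'(\lambda)-\lambda$, which is exactly~\eqref{eqforlam}. The paper organizes the argument slightly differently---it first establishes existence of a minimizer from the endpoint asymptotics, then identifies the critical point with $\lambda(D)$ and reads off the value $-G^2/G'=\mathbb V(D)$ from~\eqref{Vexp}---while you frame it as a two-sided inequality; but the ingredients and the logic are the same.
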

\begin{proof}
Since $G(\lambda)\to+\infty$ as $\lambda\to-1$ and
$G(\lambda)=O(\lambda^{-1/2})$ as $\lambda\to+\infty$,
it follows that the minimum is attained for each fixed $D\ge1$ at some point $\lambda_*=\lambda_*(D)$.
Then
$$
\frac{d}{d\lambda}\bigl((\lambda+D)G(\lambda)\bigr)\vert_{\lambda=\lambda_*}=0,
$$
which gives
$$
D=-\frac{G(\lambda)}{G'(\lambda)}-\lambda.
$$
In view of~\eqref{fgh} this equation coincides with \eqref{eqforlam} and
therefore $\lambda_*(D)$ coincides with the unique
inverse function $\lambda(D)$ constructed in
Lemma~\ref{L:impl}.
This gives
$$
(\lambda(D)+D)G(\lambda(D))=-\frac{G(\lambda(D))^2}
{G'(\lambda(D))}=\mathbb{V}(D).
$$
The proof is complete.
\end{proof}

Of course, it is impossible to find an explicit formula for the inverse function $\lambda=\lambda(D)$, therefore
it is impossible to find an explicit formula for
$\mathbb{V}(D)$. However, it is possible to find the
asymptotic expansion of $\mathbb{V}(D)$ as $D\to \infty$.
All that we need to know for this purpose is the asymptotic
expansion of the function $G(\lambda)$ as $\lambda\to \infty$. This  expansion, in turn, is
found by the Poisson summation formula
(or by means of the explicit formula \eqref{fexpl}):
\begin{equation}\label{asympf}
G(\lambda)=\frac1{2\pi}
\sum_{k\in\mathbb{Z}_0}\frac1{\lambda+k^2}=
\frac12\lambda^{-1/2}-\frac1{2\pi}\lambda^{-1}
+O(e^{-\pi\lambda^{1/2}})
\quad\text{as}\ \lambda\to\infty.
\end{equation}
\begin{lemma}\label{L:AsymV}
It holds as $D\to\infty$
\begin{equation}\label{asympV}
\mathbb{V}(D)=D^{1/2}-\frac1{\pi}-\frac1{2\pi^2}D^{-1/2}+
O(D^{-1}).
\end{equation}
\end{lemma}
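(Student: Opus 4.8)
The plan is to feed the variational characterisation of Theorem~\ref{T:Var} into the asymptotics \eqref{asympf} of $G$. For $D\ge1$ the minimum in \eqref{Var} is attained at $\lambda_*=\lambda(D)$, the unique root of \eqref{eqforlam}, and
\[
\mathbb V(D)=(\lambda_*+D)\,G(\lambda_*),
\]
while $\lambda(D)\to\infty$ as $D\to\infty$ by Lemma~\ref{L:impl}. Hence everything reduces to (i) locating $\lambda_*(D)$ for large $D$ and (ii) substituting it back. As in the proof of Theorem~\ref{T:Var}, the stationarity condition $\tfrac{d}{d\lambda}\bigl((\lambda+D)G(\lambda)\bigr)=0$ reads
\[
D\;=\;-\frac{G(\lambda)}{G'(\lambda)}-\lambda\;=:\;D(\lambda),
\]
and by Lemma~\ref{L:impl} this $D(\lambda)$ is strictly increasing, so $\lambda_*(D)$ is obtained by inverting it for large argument.

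First I would make \eqref{asympf} differentiable. Summing the Poisson formula in closed form, $\sum_{k\in\mathbb Z}(\lambda+k^2)^{-1}=\pi\lambda^{-1/2}\coth(\pi\lambda^{1/2})$, so by \eqref{fgh} one has the exact expression $G(\lambda)=\tfrac1{2}\lambda^{-1/2}\coth(\pi\lambda^{1/2})-\tfrac1{2\pi}\lambda^{-1}$; the remainder in \eqref{asympf} is $\tfrac1{2}\lambda^{-1/2}\bigl(\coth(\pi\lambda^{1/2})-1\bigr)=O(e^{-2\pi\lambda^{1/2}})$, smooth, and super-exponentially small together with all its derivatives. Thus
\[
G(\lambda)=\tfrac12\lambda^{-1/2}-\tfrac1{2\pi}\lambda^{-1}+O(e^{-2\pi\lambda^{1/2}}),\qquad
G'(\lambda)=-\tfrac14\lambda^{-3/2}+\tfrac1{2\pi}\lambda^{-2}+O(\lambda^{-1/2}e^{-2\pi\lambda^{1/2}}).
\]
Dividing these two expansions gives $-G(\lambda)/G'(\lambda)=2\lambda+\tfrac2\pi\lambda^{1/2}+\tfrac4{\pi^2}+O(\lambda^{-1/2})$, hence
\[
D(\lambda)=\lambda+\tfrac2\pi\lambda^{1/2}+\tfrac4{\pi^2}+O(\lambda^{-1/2}),\qquad\lambda\to\infty.
\]

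Finally I would invert and substitute. Writing $\mu:=\lambda^{1/2}$ and completing the square, $D=(\mu+\tfrac1\pi)^2+\tfrac3{\pi^2}+O(\mu^{-1})$, and since $\mu\sim D^{1/2}$ this yields
\[
\mu=\lambda_*^{1/2}=D^{1/2}-\tfrac1\pi-\tfrac3{2\pi^2}D^{-1/2}+O(D^{-1}).
\]
Now $\lambda_*+D=\mu^2+D=2\mu^2+\tfrac2\pi\mu+\tfrac4{\pi^2}+O(\mu^{-1})$ and $G(\lambda_*)=\tfrac1{2}\mu^{-1}-\tfrac1{2\pi}\mu^{-2}+O(e^{-2\pi\mu})$; multiplying these out, the first three cross-terms telescope and one is left with
\[
\mathbb V(D)=(\lambda_*+D)G(\lambda_*)=\mu+\tfrac1{\pi^2}\mu^{-1}+O(\mu^{-2}).
\]
Re-expanding $\mu=D^{1/2}-\tfrac1\pi-\tfrac3{2\pi^2}D^{-1/2}+O(D^{-1})$ and $\tfrac1{\pi^2}\mu^{-1}=\tfrac1{\pi^2}D^{-1/2}+O(D^{-1})$, and using $-\tfrac3{2\pi^2}+\tfrac1{\pi^2}=-\tfrac1{2\pi^2}$, produces exactly \eqref{asympV}.

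The only delicate part is the bookkeeping of the error terms: one must carry three terms of \eqref{asympf} and of its derivative throughout, check that every exponentially small remainder, even after multiplication by factors of size $O(\lambda)=O(D)$, stays $o(D^{-1})$, and verify that the $O(\cdot)$ terms propagate consistently through the inversion $D\mapsto\lambda_*(D)$ so that the stated $O(D^{-1})$ accuracy is legitimate. This is routine asymptotic analysis and there is no hidden obstacle; if one prefers to avoid $G'$, an equivalent route is to first show $\lambda_*(D)\in[D/2,2D]$ for large $D$ from the monotonicity of $D(\lambda)$ and then minimise the explicit ``rational part'' $(\lambda+D)\bigl(\tfrac12\lambda^{-1/2}-\tfrac1{2\pi}\lambda^{-1}\bigr)$ of $(\lambda+D)G(\lambda)$ over that interval, the exponential tail contributing only $O(e^{-c\sqrt D})$.
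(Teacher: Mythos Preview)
Your proof is correct and follows essentially the same route as the paper: expand $G$ and $G'$ from the closed form \eqref{fexpl}, obtain $D(\lambda)=\lambda+\tfrac2\pi\lambda^{1/2}+\tfrac4{\pi^2}+O(\lambda^{-1/2})$, invert, and substitute back. The only cosmetic differences are that you work with $\mu=\lambda^{1/2}$ and plug into the product form $(\lambda_*+D)G(\lambda_*)$, whereas the paper inverts for $\lambda(D)$ directly and plugs into the equivalent quotient $-G^2/G'$; your slightly more explicit error bookkeeping (invoking the exact $\coth$ formula to justify termwise differentiation) is a nice touch but not a different idea.
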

\begin{proof}
This is a particular case of the general result
of Proposition~2.1 in \cite{IZ}. In addition to~\eqref{asympf}
 we have
\begin{equation}\label{asympgh}
G'(\lambda)=-\frac14\lambda^{-3/2}+\frac1{2\pi}\lambda^{-2}
+O(\lambda^{-5/2}),
\end{equation}
and, hence,
\begin{equation}\label{asympD(lambda)}
D(\lambda)=-\frac{G(\lambda)}{G'(\lambda)}-\lambda=\lambda+\frac2\pi\lambda^{1/2}+
\frac4{\pi^2}+O(\lambda^{-1/2}).
\end{equation}
The well-defined inverse function $\lambda(D)$ (see~\eqref{eqforlam})
has the asymptotic behaviour
\begin{equation}\label{asymplambda(D)}
\lambda(D)=D-\frac2\pi D^{1/2}-\frac2{\pi^2}+O(D^{-1/2})
\quad\text{as}\ D\to\infty.
\end{equation}
Substituting this into~\eqref{asympf}, \eqref{asympgh},
we obtain for $\mathbb{V}(D)=-\frac{G^2(\lambda(D))}{G'(\lambda(D))}$
the asymptotic expansion~\eqref{asympV}.
The proof is complete.
\end{proof}

The third term in \eqref{asympV} is negative,
hence,
\begin{equation}\label{main}
\mathbb{V}(D)<D^{1/2}-\frac1{\pi}
\end{equation}
for all sufficiently large $D\ge D_0$.
Therefore we shall have proved inequality
\eqref{rem} once we have shown that \eqref{main}
holds for \textit{all} $D\ge1$.
Moreover, Lemma~\ref{L:AsymV} implies that both
constants in \eqref{rem} are sharp.
\begin{theorem}\label{T:est}
Inequality~\eqref{main} holds for all $D\ge1$.
\end{theorem}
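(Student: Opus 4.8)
The plan is to use the dual description provided by Theorem~\ref{T:Var}. Since $\mathbb V(D)=\min_{\lambda\in[-1,\infty)}(\lambda+D)G(\lambda)$, it is enough to exhibit, for each fixed $D\ge1$, \emph{one} admissible value $\lambda=\lambda(D)\ge-1$ for which the \emph{strict} inequality
$$(\lambda+D)\,G(\lambda)<D^{1/2}-\frac1\pi$$
holds; then automatically $\mathbb V(D)\le(\lambda+D)G(\lambda)<D^{1/2}-1/\pi$. Thus \eqref{main} reduces to a one-variable estimate, with no compactness argument and no splitting into small/large $D$.

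The decisive point is the choice of the test parameter. Put $R:=D^{1/2}\ge1$ and take
$$\lambda:=\Bigl(R-\frac1\pi\Bigr)^2>0,\qquad\text{so that}\qquad \sqrt\lambda=w:=R-\frac1\pi=D^{1/2}-\frac1\pi,\qquad \lambda+D=R^2+w^2 ,$$
which is admissible since $\lambda>0>-1$. I would then insert the closed form of $G$: by the classical identity $\sum_{k\in\mathbb Z}(k^2+\lambda)^{-1}=\pi\lambda^{-1/2}\coth(\pi\lambda^{1/2})$ (equivalently the explicit/Poisson expression underlying \eqref{asympf}),
$$G(\lambda)=\frac1{2\pi}\sum_{k\in\mathbb Z_0}\frac1{\lambda+k^2}=\frac{\coth(\pi\sqrt\lambda)}{2\sqrt\lambda}-\frac1{2\pi\lambda}=\frac{\coth(\pi w)}{2w}-\frac1{2\pi w^2}.$$
Multiplying the target inequality $(R^2+w^2)G(\lambda)<w$ by $2\pi w^2>0$, using $R=w+\frac1\pi$ to rewrite $R^2+w^2=2w^2+\frac{2w}\pi+\frac1{\pi^2}$, and substituting $\coth(\pi w)=1+\frac2{e^{2\pi w}-1}$, the leading and all the non-exponential terms cancel and the inequality collapses to
$$\Bigl(2\pi w^3+2w^2+\frac w\pi\Bigr)\frac2{e^{2\pi w}-1}<\frac w\pi+\frac1{\pi^2}.$$
Setting $v:=\pi w$ this is exactly
$$2v(2v^2+2v+1)<(v+1)\bigl(e^{2v}-1\bigr),$$
to be verified for $v\ge\pi-1$, the range forced by $R\ge1$ (i.e. $w\ge1-\tfrac1\pi$).

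It remains to dispatch this elementary inequality on $v\ge\pi-1$. Using $e^{2v}-1\ge 2v+2v^2+\frac43v^3+\frac23v^4$ (a truncation of the exponential series, the remainder being nonnegative for $v>0$), a short expansion gives
$$(v+1)\bigl(e^{2v}-1\bigr)-2v(2v^2+2v+1)\ \ge\ \frac23\,v^3\bigl(v^2+3v-1\bigr),$$
and $v^2+3v-1>0$ for $v>\tfrac12(-3+\sqrt{13})\approx0.30$, hence in particular for all $v\ge\pi-1$. This proves the displayed inequality and therefore \eqref{main}; since the inequality is strict for this choice of $\lambda$, $\mathbb V$ never attains the value $D^{1/2}-1/\pi$, and combined with Lemma~\ref{L:AsymV} this also yields the sharpness of both constants in \eqref{rem}.

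The only genuinely nonroutine step is spotting the substitution $\lambda=(D^{1/2}-1/\pi)^2$: it makes $\sqrt\lambda$ equal to the very quantity $D^{1/2}-1/\pi$ that must be dominated, so that the $\coth$-free part $\frac1{2\sqrt\lambda}-\frac1{2\pi\lambda}$ of $G$ already contributes to the left-hand side the value $(D^{1/2}-1/\pi)$ minus a strictly positive $O(D^{-1/2})$ term, into which the exponentially small $\coth$-tail is comfortably absorbed. This is why the resulting polynomial-versus-exponential inequality holds with a wide margin on the relevant range (at the endpoint $v=\pi-1$ the two sides are roughly $62$ and $224$); everything after the choice of $\lambda$ is bookkeeping.
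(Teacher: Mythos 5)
Your proof is correct, and it follows the same overall strategy as the paper: invoke the dual formula $\mathbb V(D)=\min_{\lambda}(\lambda+D)G(\lambda)$ from Theorem~\ref{T:Var}, insert an explicit trial value $\lambda=\lambda(D)$, and reduce \eqref{main} to a one-variable elementary inequality. The only real divergence is in the choice of trial parameter: the paper takes $\lambda=(D^{1/2}-\tfrac12)^2$ (which matches only the leading term of the exact asymptotic $\lambda(D)=D-\tfrac2\pi D^{1/2}-\tfrac2{\pi^2}+O(D^{-1/2})$, with ``experimental'' lower-order constants), then bounds $G$ above by $G_0$ using $x<\sinh x$, and checks that a resulting function $W(y)$ is decreasing with $W(1/2)<0$. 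You instead take $\lambda=(D^{1/2}-\tfrac1\pi)^2$, which reproduces the first \emph{two} terms of the asymptotic expansion; as a consequence the non-exponential parts cancel identically once $\coth(\pi w)=1+\tfrac2{e^{2\pi w}-1}$ is inserted, and you are left with the clean polynomial-versus-exponential inequality $2v(2v^2+2v+1)<(v+1)(e^{2v}-1)$, $v=\pi(D^{1/2}-1/\pi)\ge\pi-1$, which you dispatch by a four-term Taylor truncation of $e^{2v}$ (I checked the algebra: the difference is $\ge\tfrac23v^3(v^2+3v-1)>0$ on the relevant range). Both approaches buy the same thing and are comparable in length; yours replaces the paper's $\sinh$-bound and monotonicity argument with a more mechanical Taylor estimate, while the paper's simpler-looking substitution produces a slightly messier final check. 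In either case the strictness of the inequality, combined with Lemma~\ref{L:AsymV}, yields sharpness of both constants.
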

\begin{corollary}\label{Cor:2}
Inequalities \eqref{rem} and \eqref{Carl}
hold and all the constants there are sharp.
\end{corollary}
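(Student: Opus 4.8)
The plan is to derive everything from Theorem~\ref{T:est} — taken as given, since its proof is the substantive part of the section — by three routine steps: normalization, the Fourier substitution of the Introduction, and reading off sharpness from Lemma~\ref{L:AsymV}. First, to obtain \eqref{rem}: let $u\ne0$ be $2\pi$-periodic with $\int_0^{2\pi}u\,dx=0$, put $v:=u/\|u\|$ and $D:=\|v'\|^2=\|u'\|^2/\|u\|^2$. Since $v$ has zero average its spectrum starts at $|k|=1$, so $D=\|v'\|^2\ge\|v\|^2=1$ and Theorem~\ref{T:est} applies to this $D$. For every $x_0$ the translate $v(\cdot+x_0)$ is admissible in \eqref{V} with the same $D$, hence $|v(x_0)|^2\le\mathbb V(D)$; taking the supremum over $x_0$ and invoking Theorem~\ref{T:est},
\[
\|v\|_\infty^2\le\mathbb V(D)<D^{1/2}-\tfrac1\pi=\frac{\|u'\|}{\|u\|}-\tfrac1\pi,
\]
and multiplication by $\|u\|^2$ gives $\|u\|_\infty^2<\|u\|\,\|u'\|-\tfrac1\pi\|u\|^2$, which is \eqref{rem}; the inequality is strict, so no extremal exists.

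Second, to pass from \eqref{rem} to \eqref{Carl}: for a sequence $\{a_k\}_{k\ge1}$, replacing each $a_k$ by $|a_k|$ only increases the left side of \eqref{Carl} and leaves the right side unchanged, so we may assume $a_k\ge0$ and set $u(x)=\sum_{k\in\mathbb Z_0}a_{|k|}e^{ikx}$, a zero-average $2\pi$-periodic function. Then $\|u\|_\infty=u(0)=2\sum_{k\ge1}a_k$, and Parseval gives $\|u\|^2=4\pi\sum_{k\ge1}a_k^2$ and $\|u'\|^2=4\pi\sum_{k\ge1}k^2a_k^2$; substituting these into \eqref{rem} and dividing by $4$ yields exactly \eqref{Carl} (when the sequence vanishes both sides are $0$).

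Third, for sharpness I would test both inequalities on the near-extremal family $u_\lambda:=G_\lambda/\|G_\lambda\|$ of Lemma~\ref{L:impl}, for which $\|u_\lambda\|=1$, $\|u_\lambda\|_\infty^2=\mathbb V(D(\lambda))$, $\|u_\lambda\|\,\|u_\lambda'\|=D(\lambda)^{1/2}$, and $D(\lambda)\to\infty$. By Lemma~\ref{L:AsymV}, along this family $\|u_\lambda\|_\infty^2=D^{1/2}-\tfrac1\pi-\tfrac1{2\pi^2}D^{-1/2}+O(D^{-1})$, so any inequality $\|u\|_\infty^2\le a\|u\|\,\|u'\|+b\|u\|^2$ valid for all zero-average $u$ forces $D^{1/2}-\tfrac1\pi+o(1)\le aD^{1/2}+b$ for all large $D$; letting $D\to\infty$ gives $a\ge1$, and then with $a=1$ it gives $b\ge-\tfrac1\pi$. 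Hence $(1,-\tfrac1\pi)$ is the optimal pair in \eqref{rem}, and pushing $u_\lambda$ through the substitution above (its Fourier coefficients are $a_k\propto(\lambda+k^2)^{-1}$) shows the constants $\pi$ and $1$ in \eqref{Carl} are sharp as well.

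The main obstacle lies not in the corollary but in Theorem~\ref{T:est} behind it: the expansion of Lemma~\ref{L:AsymV} gives $\mathbb V(D)<D^{1/2}-\tfrac1\pi$ only for large $D$, so a \emph{uniform} bound down to $D=1$ is required — presumably via the variational formula of Theorem~\ref{T:Var} with a trial value such as $\lambda=(D^{1/2}-\tfrac1\pi)^2$ (suggested by \eqref{asymplambda(D)}) together with the closed form of $G(\lambda)$, reducing matters to an elementary exponential inequality that has to be checked uniformly, in particular near $D=1$ where the correction is not yet negligible. Within the corollary itself the only delicate point is that sharpness of the \emph{correction} constant $\tfrac1\pi$, as opposed to the leading constant, must be read off from the second term of the expansion in Lemma~\ref{L:AsymV}.
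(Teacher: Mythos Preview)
Your proof is correct and follows the same line as the paper: the paper's argument is the terse one-liner $u(0)^2\le\|u\|^2\,\mathbb V(\|u'\|^2/\|u\|^2)<\|u\|\,\|u'\|-\tfrac1\pi\|u\|^2$ by homogeneity and Theorem~\ref{T:est}, with sharpness read off from Lemma~\ref{L:AsymV} (and the passage to \eqref{Carl} via the Fourier substitution already explained in the Introduction); you have simply unpacked each of these steps in full detail. One incidental remark: in your final paragraph the trial value the paper actually uses in the proof of Theorem~\ref{T:est} is $\lambda=(D^{1/2}-\tfrac12)^2$, not $(D^{1/2}-\tfrac1\pi)^2$, but this is outside the scope of the corollary itself.
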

\begin{proof}
By homogeneity and \eqref{main}, for a $u\in\dot H^1$
$$
u(0)^2\le \|u\|^2\mathbb{V}\left(\frac{\|u'\|^2}{\|u\|^2}\right)<
\|u\|^2\cdot\frac{\|u'\|}{\|u\|}-\frac 1\pi \|u\|^2.
$$

\end{proof}
\begin{proof}[Proof of the theorem]
The proof is based on the variational representation
\eqref{Var} and the
explicit formula for $G(\lambda)$:
\begin{equation}\label{fexpl}
G(\lambda)=\frac1{2\pi}
\sum_{k\in\mathbb{Z}_0}\frac1{\lambda+k^2}=
\frac{1}{2\pi}\,
\frac{\pi\sqrt\lambda\,\coth(\pi\sqrt\lambda\,)-1}{\lambda}.
\end{equation}
We estimate $G(\lambda)$ by a more convenient expression
\begin{equation}\label{convi}
G(\lambda)<\frac{\pi\sqrt{\lambda}-1+e^{-\pi\sqrt{\lambda}}}
{2\pi\lambda}=:G_0(\lambda),
\end{equation}
where the above  inequality  by equivalent transformations reduces to $x<\sinh(x)$, $x>0$.

 Thus, in view of~\eqref{Var} and  \eqref{convi}, for $D\ge1$
$$
\aligned
\mathbb V(D)
\le
(\lambda+D)G_0(\lambda)\big|_{\lambda=(D^{1/2}-1/2)^2}=:
\mathbb V_0(y(D)),
\endaligned
$$
where $y=y(D):=D^{1/2}-1/2$, $y\ge 1/2$ and
$$
\mathbb V_0(y)=\frac1{2\pi y^2}\bigl(\pi y-1+e^{-\pi y}\bigr)
\bigl(y^2+(y+1/2)^2\bigr).
$$
Now
\begin{equation}\label{1}
\aligned
\mathbb{V}(D)-D^{1/2}+\frac1{\pi}<
\mathbb{V}_0(y)-\left(y+\frac12\right)+\frac1{\pi}=\\=
\frac1{8\pi y^2}\left((8y^2+4y+1)e^{-\pi y}-(4-\pi)y-1\right)
=:\frac1{8\pi y^2}W(y).
\endaligned
\end{equation}
Next,
$$
W'(y)=\bigl(-8\pi y^2+(16-4\pi)y+4-\pi\bigr)e^{-\pi y}-4+\pi
$$
and the coefficient of $e^{-\pi y}$ is negative for $y\ge1/2$.
Therefore $W'(y)<0$ and  $W(y)$ is decreasing for $y\ge1/2$ and
\begin{equation}\label{2}
W(y)\le W(1/2)=5e^{-\pi/2}-3+\pi/2=-0.3898<0,
\end{equation}
which completes the proof of~\eqref{main}.
\end{proof}
\begin{remark}
{\rm
 The proof of inequality~\eqref{main} in the last theorem
is in the spirit of Hardy's first proof~\cite{Hardy} of
the original Carlson inequality~\eqref{Carlorig} and is, in fact,
self-contained and formally independent of the previous
argument. It follows from~\eqref{freelambda} that
$$
\mathbb{V}(D)\le (\lambda+D)G(\lambda),
$$
where $\lambda\ge-1$ is an arbitrary free parameter.
Therefore inequality~\eqref{main} will be proved if we succeed
in finding such a substitution $\lambda=\lambda(D)$ for
which
$$
(\lambda(D)+D)G(\lambda(D))<D^{1/2}-\frac1\pi\,
\quad\text{for all $D\ge1$}.
$$
Now estimates \eqref{1} and \eqref{2} in the proof of
Theorem~\ref{T:est} are saying that the
substitution $\lambda(D)=D-D^{1/2}+1/4$ will do the job.
This substitution  agrees in the leading term
with~\eqref{asymplambda(D)}. The lower order terms are
`experimental'. Also, without knowing~\eqref{asymplambda(D)}
finding this substitution becomes much more difficult.

On the other hand, the proof of sharpness is contained in Lemma~\ref{L:AsymV}.
 Alternatively, we can verify sharpness of \eqref{rem}
(and~\eqref{Carl}) at the test
 function $\sum_{k\in\mathbb{Z}_0}\frac{e^{ikx}}{\lambda+k^2}$
 by letting $\lambda\to\infty$.
}

\end{remark}

\setcounter{equation}{0}
\section{Magnetic inequality}
\label{sec3}

We are interested in the inequality
\begin{equation}\label{magnetic}
\|u\|_\infty^2\le K(\alpha)\left(\int_0^{2\pi}
\left|i\,\frac{ du}{dx}-a(x) u\right|^2dx\right)^{1/2}
\left(\int_0^{2\pi}
|u(x)|^2dx\right)^{1/2},
\end{equation}
where $u$ is a $2\pi$-periodic function
(which may be a constant so no zero-mean
condition is assumed), and $a\in L^1(0,2\pi)$. Here $K(\alpha)$ denotes a sharp constant and we show below that it depends only  on the flux
\begin{equation}\label{flux}
\alpha:=\frac1{2\pi}\int_0^{2\pi}a(x)dx,
\end{equation}
and $K(\alpha)<\infty$ if and only if $\alpha\notin\mathbb{Z}$.

\subsection*{Constant magnetic potential.}
We first consider the case when
$a(x)\equiv \alpha\in(0,1)$.
Setting
$$
A=\left(i\,\frac d{dx}-\alpha\right)^2
$$
we consider the positive-definite self-adjoint operator
$$
\mathbb{A}(\lambda):=A+\lambda I,
\qquad\lambda\ge -\min(\alpha^2,(1-\alpha)^2)
$$
and its Green's function $G_\lambda(x,\xi)$:
$$
\mathbb{A}(\lambda)G_\lambda(x,\xi)=\delta(x-\xi),
$$
 which is found in terms of the Fourier series
$$
G_\lambda(x,\xi)=\frac1{2\pi}\sum_{n\in\mathbb{Z}}
\frac{e^{in(x-\xi)}}{(n+\alpha)^2+\lambda}\,,
$$
so that
\begin{equation}\label{G}
G_\lambda(\xi,\xi)=\frac1{2\pi}\sum_{n\in\mathbb{Z}}
\frac{1}{(n+\alpha)^2+\lambda}=:G(\lambda)\,,
\end{equation}

The series can be summed explicitly
(for instance, by the Poisson summation formula)
\begin{equation}\label{Green}
G(\lambda)=\frac1{2\sqrt{\lambda}}\cdot
\frac{\sinh(2\pi\sqrt{\lambda})}
{\cosh(2\pi\sqrt{\lambda})-\cos(2\pi\alpha)}\,.
\end{equation}

By Theorem~2.2 in~\cite{IZ} with $\theta=1/2$
(see also Remark~\ref{R:direct})
\begin{equation}\label{Kalpha}
\aligned
K(\alpha)=
\frac{1}{\theta^\theta(1-\theta)^{1-\theta}}\cdot
\sup_{\lambda>0}\lambda^{\theta}G(\lambda)=
2\sup_{\lambda>0}\sqrt{\lambda} G(\lambda)=
\sup_{\varphi>0}F(\varphi),
\endaligned
\end{equation}
where
$$
F(\varphi):=\frac{\sinh\varphi}
{\cosh\varphi-\cos(2\pi\alpha)}
$$
 and $\varphi= 2\pi\sqrt{\lambda}$. Next, the derivative
$$
\frac d{d\varphi}F(\varphi)=
\frac{1-\cosh\varphi\cos(2\pi\alpha)}
{(\cosh\varphi-\cos(2\pi\alpha))^2}>0
$$
if $\cos(2\pi\alpha)\le0$, that is, if $\alpha\in [1/4,3/4]$,
so that in this case $F$ is increasing and  the supremum is  `attained' at infinity,
which gives
$$
K(\alpha)=1\qquad\text{for}\quad \frac14\le\alpha\le\frac34.
$$
Otherwise, for $\alpha\in(0,1/4)\cup(3/4,1)$ the function $F(\varphi)$
attains a global maximum  at
$$
\varphi_*(\alpha)=\operatorname{arccosh}
\left(\frac1{\cos(2\pi\alpha)}\right)\,,
$$
which gives
$$
K(\alpha)=F(\varphi_*(\alpha))=\frac1{|\sin(2\pi\alpha)|}\,.
$$
Finally, it is clear from the argument as well as from the result that it is $\alpha\operatorname{mod}(1)$ that really matters.

\subsection*{Non-constant magnetic potential.} Now
\begin{equation}\label{A}
A=\left(i\frac d{dx}-a(x)\right)^2,
\end{equation}
 and for the flux $\alpha$ defined in~\eqref{flux} let
$$
\varphi_n(x)=\frac1{\sqrt{2\pi}}
e^{i\bigl(n+\alpha)x-\int_0^xa(y)dy\bigr)}\,.
$$
Then $\{\varphi_n\}_{n=-\infty}^\infty$ is an orthonormal system in $L_2(0,2\pi)$.
Note that since $n\in\mathbb{Z}$,
these functions are periodic and also satisfy the equation
$$
\left(i\frac d{dx}-a(x)\right)\varphi_n=-(n+\alpha)\varphi_n
$$
and therefore we also have
$$
A\varphi_n=(n+\alpha)^2\varphi_n.
$$
In addition,  the system $\{\varphi_n\}_{n=-\infty}^\infty$
is complete (since  $\varphi_n(x)=c(x)e^{-inx}$ with
 $|c(x)|=1/\sqrt{2\pi}$). Then the Green's function for the operator
$A+\lambda I$ equals
$$
G_\lambda(x,\xi)=\sum_{n\in\mathbb{Z}}
\frac{\varphi_n(x-\xi)}{(n+\alpha)^2+\lambda}=
\frac1{2\pi}\sum_{n\in\mathbb{Z}}
\frac{e^{i\bigl(n+\alpha)(x-\xi)-\int_\xi^xa(y)dy\bigr)}}{(n+\alpha)^2+\lambda}\,
$$
and the expression for $G_\lambda(\xi,\xi)$ is exactly the
same as in~\eqref{G} and therefore everything after~\eqref{G}
is the same as in the case of a constant magnetic potential.

Thus, we have proved the following result.

\begin{theorem}\label{Th:magnetic}
Inequality~\eqref{magnetic}
holds for $\alpha\notin\mathbb{Z}$ and
the sharp constant $K(\alpha)$ is given by~\eqref{Kalpha1}.
Furthermore,  for $\alpha\operatorname{mod}(1)\in(0,1/4)\cup(3/4,1)$
there exists a unique extremal function
\begin{equation}\label{uextrem}
u_\lambda(x)=
\sum_{n\in\mathbb{Z}}
\frac{\varphi_n(x)}{(n+\alpha)^2+\lambda}=\frac1{\sqrt{2\pi}}
\sum_{n\in\mathbb{Z}}
\frac{e^{i\bigl(n+\alpha)x-\int_0^xa(y)dy\bigr)}}{(n+\alpha)^2+\lambda}\,,
\end{equation}
where
$$
\lambda=\lambda(\alpha):=\left[\frac1{2\pi}
\operatorname{arccosh}\left(\frac1{\cos(2\pi\alpha)}\right)\right]^2.
$$
There are no extremals for $\alpha\operatorname{mod}(1)\in[1/4,3/4]$.
\end{theorem}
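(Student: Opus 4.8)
The plan is to derive everything from the scalar function $G(\lambda)$ of \eqref{Green}. The key pointwise estimate --- the one already behind \eqref{Kalpha} --- is as follows: writing $\mathbb{A}(\lambda)=A+\lambda I$ for $\lambda>-\alpha_0^2$, $\alpha_0:=\operatorname{dist}(\alpha,\mathbb{Z})$, and using $\mathbb{A}(\lambda)G_\lambda(\cdot,\xi)=\delta_\xi$ together with self-adjointness of $\mathbb{A}(\lambda)^{1/2}$ and the Cauchy--Schwarz inequality,
\[
|u(\xi)|^2=|\langle \mathbb{A}(\lambda)^{1/2}u,\ \mathbb{A}(\lambda)^{1/2}G_\lambda(\cdot,\xi)\rangle|^2\le\bigl(\|A^{1/2}u\|^2+\lambda\|u\|^2\bigr)G_\lambda(\xi,\xi)=\bigl(\|A^{1/2}u\|^2+\lambda\|u\|^2\bigr)G(\lambda),
\]
with equality if and only if $u$ is proportional to $G_\lambda(\cdot,\xi)$; here $\|\mathbb{A}(\lambda)^{1/2}G_\lambda(\cdot,\xi)\|^2=\langle G_\lambda(\cdot,\xi),\delta_\xi\rangle=G(\lambda)$. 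Since the reduction of a non-constant potential to a constant one has already been carried out before the statement (the system $\{\varphi_n\}$ is orthonormal and complete and $G_\lambda(\xi,\xi)$ does not depend on $a$), it suffices to prove the theorem with $G(\lambda)$ given by \eqref{Green}. Choosing the free parameter to be $\lambda=s^2$, $s:=\|A^{1/2}u\|/\|u\|\ (\ge\alpha_0>0)$, the displayed bound becomes $\|u\|_\infty^2\le\|u\|\,\|A^{1/2}u\|\cdot 2sG(s^2)\le\|u\|\,\|A^{1/2}u\|\cdot 2\sup_{\lambda>0}\sqrt\lambda\,G(\lambda)$, so $K(\alpha)\le 2\sup_{\lambda>0}\sqrt\lambda\,G(\lambda)=\sup_{\varphi>0}F(\varphi)$, which is \eqref{Kalpha} (the general statement of this reduction is Theorem~2.2 of \cite{IZ} with $\theta=1/2$, and it can also be done by hand as in \S\ref{sec2}).

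Then I would carry out the elementary calculus on $F(\varphi)=\sinh\varphi/(\cosh\varphi-\cos(2\pi\alpha))$ begun in the text, using $F'(\varphi)=(1-\cosh\varphi\cos(2\pi\alpha))(\cosh\varphi-\cos(2\pi\alpha))^{-2}$. If $\cos(2\pi\alpha)\le0$, i.e.\ $\alpha\operatorname{mod}(1)\in[1/4,3/4]$, then $F$ is strictly increasing with $F(\varphi)\uparrow1$, so $K(\alpha)=1$. If $\cos(2\pi\alpha)>0$, i.e.\ $\alpha\operatorname{mod}(1)\in(0,1/4)\cup(3/4,1)$, then $F$ has a unique critical point $\varphi_*=\operatorname{arccosh}(1/\cos(2\pi\alpha))$, a global maximum, and substituting $\cosh\varphi_*=1/\cos(2\pi\alpha)$, $\sinh\varphi_*=|\sin(2\pi\alpha)|/\cos(2\pi\alpha)$ gives $F(\varphi_*)=1/|\sin(2\pi\alpha)|$. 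This is \eqref{Kalpha1}. Finiteness of $K(\alpha)$ for $\alpha\notin\mathbb{Z}$ is automatic: $1/|\sin(2\pi\alpha)|$ is finite except when $\alpha\in\tfrac12\mathbb{Z}$, and a half-integer value of $\alpha$ lands in the region where $K(\alpha)=1$; conversely, for $\alpha\in\mathbb{Z}$ the gauged constant $\varphi_{-\alpha}$ has $A\varphi_{-\alpha}=0$, so the right-hand side of \eqref{magnetic} vanishes while the left-hand side does not, forcing $K(\alpha)=\infty$.

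The extremal statements I would obtain by tracking equality cases. Let $\alpha\operatorname{mod}(1)\in(0,1/4)\cup(3/4,1)$ and $\lambda_*:=\lambda(\alpha)=(\varphi_*/2\pi)^2>0$. For $u=G_{\lambda_*}(\cdot,\xi)$ one checks $\|u\|_\infty=|u(\xi)|=G(\lambda_*)$, the maximum being attained only for $x\equiv\xi$ since $|G_{\lambda_*}(x,\xi)|=\tfrac1{2\pi}|\sum_n w_n\,e^{in(x-\xi)}|$ with $w_n=((n+\alpha)^2+\lambda_*)^{-1}>0$; computing $\|u\|^2=-G'(\lambda_*)$ and $\|A^{1/2}u\|^2=G(\lambda_*)+\lambda_*G'(\lambda_*)$ and using the criticality relation $G(\lambda_*)+2\lambda_*G'(\lambda_*)=0$ (which is just $\frac d{d\lambda}[\sqrt\lambda\,G(\lambda)]=0$ at $\lambda_*$) one finds that the quotient at this $u$ equals $2\sqrt{\lambda_*}\,G(\lambda_*)=K(\alpha)$; and $G_{\lambda_*}(\cdot,0)$ is precisely the function $u_\lambda$ of \eqref{uextrem}, a general $\xi$ giving a translate. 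For uniqueness, if $u\ne0$ is any extremal with $|u(\xi_0)|=\|u\|_\infty$, then $K(\alpha)\le(s+\lambda/s)G(\lambda)$ for all admissible $\lambda$ combined with $\min_\lambda(s+\lambda/s)G(\lambda)\le 2sG(s^2)=F(2\pi s)\le K(\alpha)$ forces $F(2\pi s)=\sup F$, hence $s=\sqrt{\lambda_*}$, and then $\lambda=\lambda_*$ is a minimiser, so the pointwise bound holds with equality at $\lambda=\lambda_*$, whence $u\propto G_{\lambda_*}(\cdot,\xi_0)$; thus the extremal is unique modulo a scalar and a translation. Finally, for $\alpha\operatorname{mod}(1)\in[1/4,3/4]$, strict monotonicity of $F$ gives $F(2\pi s)<1$ for every finite $s>0$, so $\|u\|_\infty^2<\|u\|\,\|A^{1/2}u\|$ for every $u\ne0$ --- no extremal exists --- while the constant $1$ is sharp because the quotient at $u_\lambda$ tends to $1$ as $\lambda\to\infty$.

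The routine part is the calculus on $F$; the real point requiring care is the uniqueness argument --- matching the parameter $\lambda$ forced by the Cauchy--Schwarz equality case with the maximiser $\varphi_*$ of $F$, which is the magnetic analogue of Theorem~\ref{T:Var} --- together with checking that no freedom beyond scaling and translation survives. One should also keep in mind that the admissible range is $\lambda>-\alpha_0^2=-\min(\alpha^2,(1-\alpha)^2)$, but both the optimisation point $\lambda=s^2$ and $\lambda_*$ lie in $(0,\infty)$, so this restriction never interferes.
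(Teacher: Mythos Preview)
Your proof is correct and follows essentially the same route as the paper: the pointwise Cauchy--Schwarz bound $|u(\xi)|^2\le(\|A^{1/2}u\|^2+\lambda\|u\|^2)G(\lambda)$ (which the paper obtains via Fourier series in Remark~\ref{R:direct}, while you phrase it operator-theoretically via the Green's function), followed by the identical calculus on $F(\varphi)$. Your treatment of the extremal and uniqueness questions is in fact more explicit than the paper's---the paper largely asserts uniqueness, whereas you trace the equality cases through the chain $K(\alpha)\le\min_\lambda(s+\lambda/s)G(\lambda)\le F(2\pi s)\le K(\alpha)$ to pin down $s=\sqrt{\lambda_*}$ and then $u\propto G_{\lambda_*}(\cdot,\xi_0)$; this is a clean way to do it and matches the spirit of Theorem~\ref{T:Var}.
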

\begin{remark}\label{R:direct}
{\rm In our one-dimensional case and operators
with explicitly known spectrum and eigenfunctions
 it makes sense to give a
direct proof of~\eqref{Kalpha}. In fact, using the Fourier
series $u(x)=\sum_{k\in\mathbb{Z}}u_k\varphi_k(x)$ and without loss of generality assuming that
$u(x)$ attains its maximum at $x=0$ we have
for an arbitrary $\lambda>0$ the following inequality
$$
\aligned
|u(0)|^2=\frac1{2\pi}\biggl|\sum_{k\in\mathbb{Z}}u_k\biggr|^2=\\=
\frac1{2\pi}\biggl|\sum_{k\in\mathbb{Z}}u_k((k+\alpha)^2+\lambda)^{1/2}
((k+\alpha)^2+\lambda)^{-1/2}\biggr|^2\le\\\le
\frac1{2\pi}\sum_{k\in\mathbb{Z}}\frac1{(k+\alpha)^2+\lambda)}
\sum_{k\in\mathbb{Z}}(|u_k|^2((k+\alpha)^2+\lambda))=\\=
G(\lambda)(\|A^{1/2}u\|^2+\lambda\|u\|^2),
\endaligned
$$
which turns into equality for $u(x)$ as in
\eqref{uextrem}.
For $\lambda_*=\|A^{1/2}u\|^2/\|u\|^2$
we see that
$$\|A^{1/2}u\|^2+\lambda_*\|u\|^2=2\lambda_*^{1/2}\
\|A^{1/2}u\|\|u\|$$ and therefore
$$
\|u\|^2_\infty\le2\sup_{\lambda>0}\lambda^{1/2}G(\lambda)
\|A^{1/2}u\|\|u\|,
$$
which shows that $K(\alpha)\le2\sup_{\lambda>0}\lambda^{1/2}G(\lambda)$.
To see that we have equality here,
we first assume that the supremum is attained at a finite point $\lambda_*<\infty$. Then
$$
\aligned
\sum_{k\in\mathbb{Z}}\frac1{(k+\alpha)^2+\lambda_*}&=
2\lambda_*\sum_{k\in\mathbb{Z}}\frac1{\bigl((k+\alpha)^2+\lambda_*\bigr)^2}
\ \left[=2\lambda_*\|u_{\lambda_*}\|^2\right],\\
\sum_{k\in\mathbb{Z}}\frac1{(k+\alpha)^2+\lambda_*}&=
2\sum_{k\in\mathbb{Z}}\frac{(k+\alpha)^2}{\bigl((k+\alpha)^2+\lambda_*\bigr)^2}
\ \left[=2\|A^{1/2}u_{\lambda_*}\|^2\right],
\endaligned
$$
where the first equality is
$(\lambda^{1/2}G(\lambda))'_{\lambda=\lambda_*}=0$, and
the validity of the second follows from the fact that
 the sum of the two equalities is a valid identity.
Since the left-hand side is equal to ${\sqrt{2\pi}}\|u_{\lambda_*}\|_\infty$
and $\lambda_*=\|A^{1/2}u_{\lambda_*}\|^2/\|u_{\lambda_*}\|^2$, recalling~\eqref{G}
we obtain
$$
\aligned
&\|u_{\lambda_*}\|^2_\infty=
\frac{1}{2\pi}\left(\sum_{k\in\mathbb{Z}}
\frac1{(k+\alpha)^2+\lambda_*}\right)^2=
{2\lambda_*}\|u_{\lambda_*}\|^2 G(\lambda_*)=
\\=
&2\lambda_*^{1/2} G(\lambda_*)
\lambda_*^{1/2}\|u_{\lambda_*}\|^2=
2\left(\lambda_*^{1/2}G(\lambda_*)\right)
\|A^{1/2}u_{\lambda_*}\|\|u_{\lambda_*}\|.
\endaligned
$$
This proves that $K(\alpha)=2\sup_{\lambda>0}\lambda^{1/2}G(\lambda)$
if $\lambda_*<\infty$.
Now we look at the case when $\lambda_*=\infty$.
Let $2\lim_{\lambda\to\infty}\lambda^{1/2}G(\lambda)=K'\ge K(\alpha)$. Setting
$H_N(\lambda)=2\lambda^{1/2}\sum_{|n|\le N}\frac1{(n+\alpha)^2+\lambda}$ we see that there
exists a sequence $N(j)\to\infty$ and a sequence $\lambda(j)\to\infty$ such that
$H_{N(j)}(\lambda(j))\to K'$. Since
$H_{N}(0)=H_{N}(\infty)=0$, it follows that
$H_{N(j)}(\lambda)$ attains a maximum at a $\lambda_*(j)<\infty$. The previous argument shows that
$H_{N(j)}(\lambda_*(j))$ is the sharp constant in our
inequality restricted to
$\mathrm{Span}\left\{\varphi_n\right\}_{n=-N(j)}^{N(j)}$.
 Therefore
$$
K(\alpha)\ge\limsup_{j\to\infty}H_{N(j)}(\lambda_*(j))\ge
\lim_{j\to\infty}H_{N(j)}(\lambda(j))=K'.
$$

As we have seen both cases are possible depending on whether
$\alpha\in[1/4,3/4]$  or $\alpha\in(0,1/4)\cup(3/4,1)$.
}
\end{remark}

\setcounter{equation}{0}
\section{Correction term}
\label{sec4} In the region $\alpha\in[1/4,3/4]$ no extremals exist
and therefore the might be a correction term in~\eqref{magn2}. By
symmetry the cases $\alpha$ and $1-\alpha$ are identical, therefore
we can and shall assume that $\alpha\in(0,1/2]$.  We now show that
the correction term indeed exists. We consider the maximization
problem
\begin{equation}\label{vD14}
\mathbb V(D):=\sup\{|u(0)|^2\colon
\ \| u\|^2=1, \
\|A^{1/2}\|^2=D\}, \ D\ge\alpha^2.
\end{equation}
Similarly to Theorem~\ref{T:Var} (see also the
general result in Theorem~2.3 in \cite{IZ})
we have
\begin{equation}\label{Vdmin}
\mathbb V(D)=\min_{\lambda\ge-\alpha^2}G(\lambda)(\lambda+D).
\end{equation}
We first consider  the cases $\alpha=1/2$
and $\alpha=1/4$.
We recall the elementary inequality~\eqref{inter}
and its refinement~\eqref{inter1} obtained in~\cite{IZ}
 and show that the case $\alpha =1/2$ or $1/4$
essentially reduces to the proof  of \eqref{inter1}
in~\cite{IZ}. In fact, for $\alpha=1/4$
 the key function \eqref{Green} becomes
$$
G(\lambda)=\frac1{2\sqrt{\lambda}}\tanh(2\pi\sqrt{\lambda}),
\quad\alpha=\frac14,
$$
and therefore
\begin{equation*}\label{vD14444}
\mathbb V(D)=\min_{\lambda\ge-1/16}G(\lambda)(\lambda+D)\le
\min_{\lambda\ge0}\frac1{2\sqrt{\lambda}}\tanh(2\pi\sqrt{\lambda})
(\lambda+D).
\end{equation*}
Up to a constant factor in the argument of $\tanh$
the minimum on the right-hand side was estimated
in \cite{IZ} (see (3.103), (3.104) there),  where it was shown that
\begin{equation}\label{prev}
\min_{\lambda\ge0}\frac{1}{2\sqrt{\lambda}}
\tanh\frac{\lambda^{1/2}}2(\lambda+D)<
\sqrt{D}\,\bigl(1-2e^{-\sqrt{D}}\bigr)
\end{equation}
Setting here $\mu=16\pi^2\lambda$ we obtain
for $\mathbb V(D)$ in \eqref{vD14} with $\alpha=1/4$
\begin{equation}\label{corr}
\mathbb V(D)<\sqrt{D}(1-2e^{-4\pi\sqrt{D}}).
\end{equation}
The case $\alpha=1/2$ is similar. Now in~\eqref{Green}
we have
$$
G(\lambda)=\frac1{2\sqrt{\lambda}}\tanh(\pi\sqrt{\lambda}),
\quad\alpha=\frac12,
$$
and in a totally similar way we find
$$
\aligned
\mathbb V(D)\le\min_{\lambda\ge0}
\frac1{2\sqrt{\lambda}}\tanh(\pi\sqrt{\lambda})(\lambda+D)
<
\sqrt{D}\left(1-2e^{-2\pi\sqrt{D}}\right).
\endaligned
$$

Thus, we have proved the following inequalities
\begin{eqnarray}
  \|u\|^2_\infty &\le& \|A^{1/2}u\|\|u\|
(1-2e^{-4\pi\|A^{1/2}u\|/\|u\|}),\quad\alpha=1/4,3/4, \label{magn33} \\
\|u\|^2_\infty &\le & \|A^{1/2}u\|\|u\|
(1-2e^{-2\pi\|A^{1/2}u\|/\|u\|}),\quad\alpha=1/2.\label{magn3}
\end{eqnarray}

The  case $\alpha\in(1/4,3/4)$ can be treated using
the general method of~\cite{IZ}. Our goal is to prove the
inequality
\begin{equation}\label{magnalpha}
\|u\|^2_\infty\le \|A^{1/2}u\|\|u\|
(1+2\cos (2\pi\alpha) e^{-2\pi\|A^{1/2}u\|/\|u\|}),\quad
1/4<\alpha<3/4,
\end{equation}
which is equivalent to
\begin{equation}\label{magnalphaV}
\mathbb{V}(D)\le\sqrt{D}(1+2\cos(2\pi\alpha)e^{-2\pi\sqrt{D}}),
\quad D\ge\alpha^2.
\end{equation}
In view of \eqref{Vdmin},  to prove this inequality it suffices to find such a substitutiuon
$\lambda=\lambda_*(D)$ for which
\begin{equation}
\label{subs}
 G(\lambda_*(D))
(\lambda_*(D)+D)\le\sqrt{D}(1+2\cos(2\pi\alpha)e^{-2\pi\sqrt{D}}),
\quad D\ge\alpha^2.
\end{equation}
The exact solution $\lambda=\lambda(D)$ for the minimizer, that is,
\begin{equation}\label{argmin}
\lambda(D)=\mathrm{argmin}\{G(\lambda)(\lambda+D)\}
\end{equation}
is the inverse function to the function $D=D(\lambda)$
$$
D(\lambda)=-\frac{G(\lambda)}{G'(\lambda)}-\lambda.
$$
It is impossible to find $\lambda(D)$ explicitly.
However, using~\eqref{Green} we can find the asymptotic expansion
$$
D(\lambda)=\lambda-4\pi a\lambda^{3/2}e^{-2\pi\sqrt{\lambda}}+
O(e^{-4\pi\sqrt{\lambda}})\quad\text{as}\ \lambda\to\infty,
$$
where
$$
a:=2\cos(2\pi\alpha).
$$
Therefore the inverse function $\lambda(D)$ (see~\eqref{argmin})
has the asymptotic behavior
$$
\lambda(D)=D+
4\pi a D^{3/2}e^{-2\pi\sqrt{D}}+
O(e^{-4\pi\sqrt{D}})\quad\text{as}\ D
\to\infty,
$$
truncating which we set
$$
\lambda_*(D)=D(1+
4\pi a \sqrt{D}e^{-2\pi\sqrt{D}}).
$$
Returning to~\eqref{magnalphaV}
 we find that
$$
G(\lambda_*(D))(\lambda_*(D)+D)=\sqrt{D}\,\Phi(y),
$$
where $y:=e^{-2\pi\sqrt{D}}$ and
$$
\Phi(y):=
\frac{1-ay\log y }{\sqrt{1-2ay\log y}}\cdot
\frac{1-y^{2\sqrt{1-2ay\log y}}}{1+y^{2\sqrt{1-2ay\log y}}
-ay^{\sqrt{1-2ay\log y}}}
\, .
$$
Therefore inequality~\eqref{subs} is equivalent to
\begin{equation}\label{subs1}
\Phi(y)<1+ay
\end{equation}
 for $y\in[0,e^{-2\pi\alpha}]$. The function $\Phi(y)$ has the asymptotic expansion
$$
\Phi(y)=1+ay+(-a^2(\log y)^2/2-2+a^2)y^2+O(y^3)
\quad \text{as}\ y\to 0^+,
$$
in which the coefficient of the quadratic term is negative for
all sufficiently small $y$. Therefore inequality~\eqref{subs1} holds for
all sufficiently small $y\in[0,y_0]$. The graphs of the
function
$\Phi(y)-(1+ay)$  on the whole intervals
$y\in[0,e^{-2\pi\alpha}]$ for $\alpha=1/3$, $3/8$, and
$1/2$
are shown in Fig.~\ref{fig:comp}.
\begin{figure}[htb]
\centerline{\psfig{file=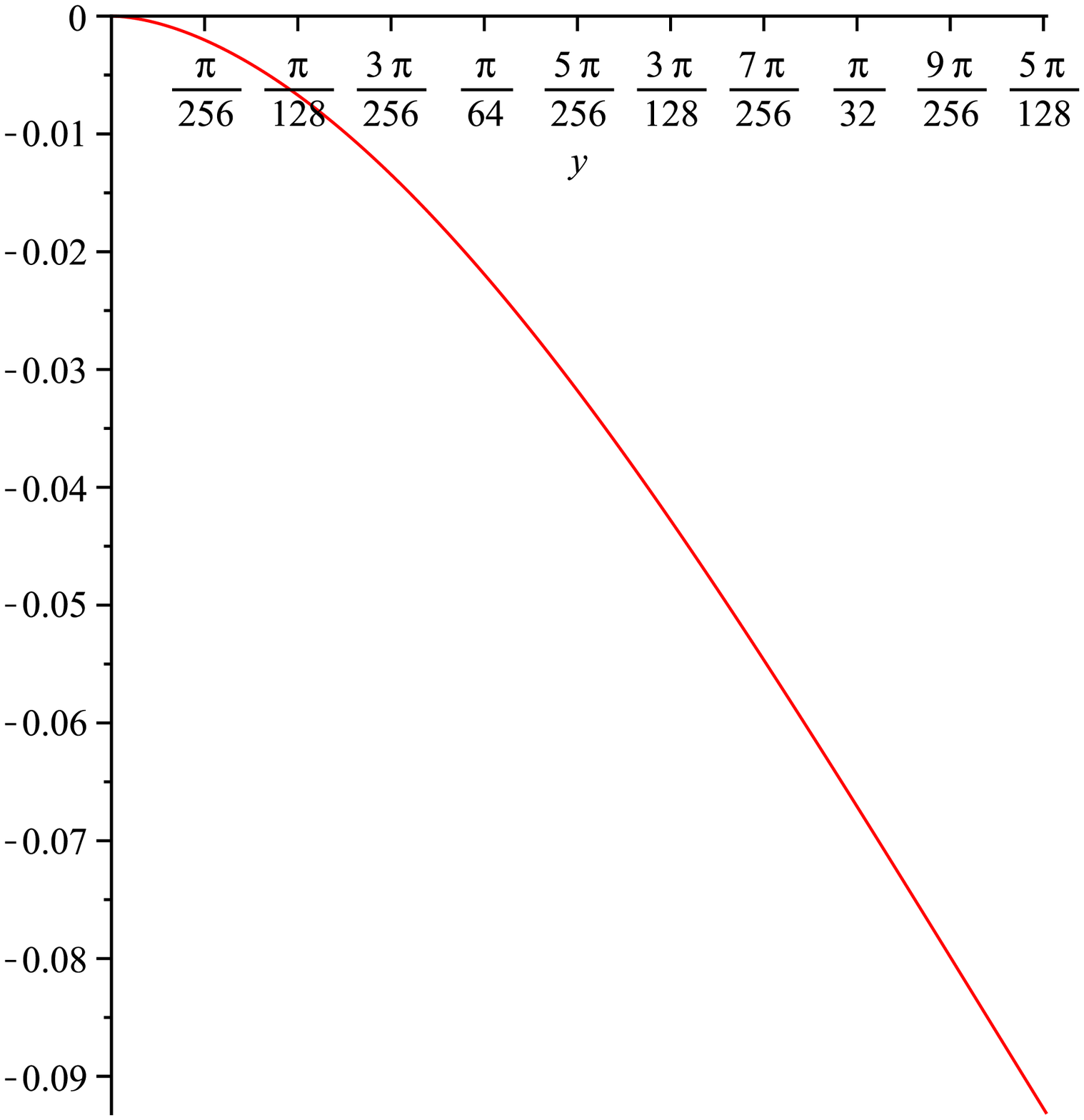,width=4cm,height=5cm,angle=0}
\psfig{file=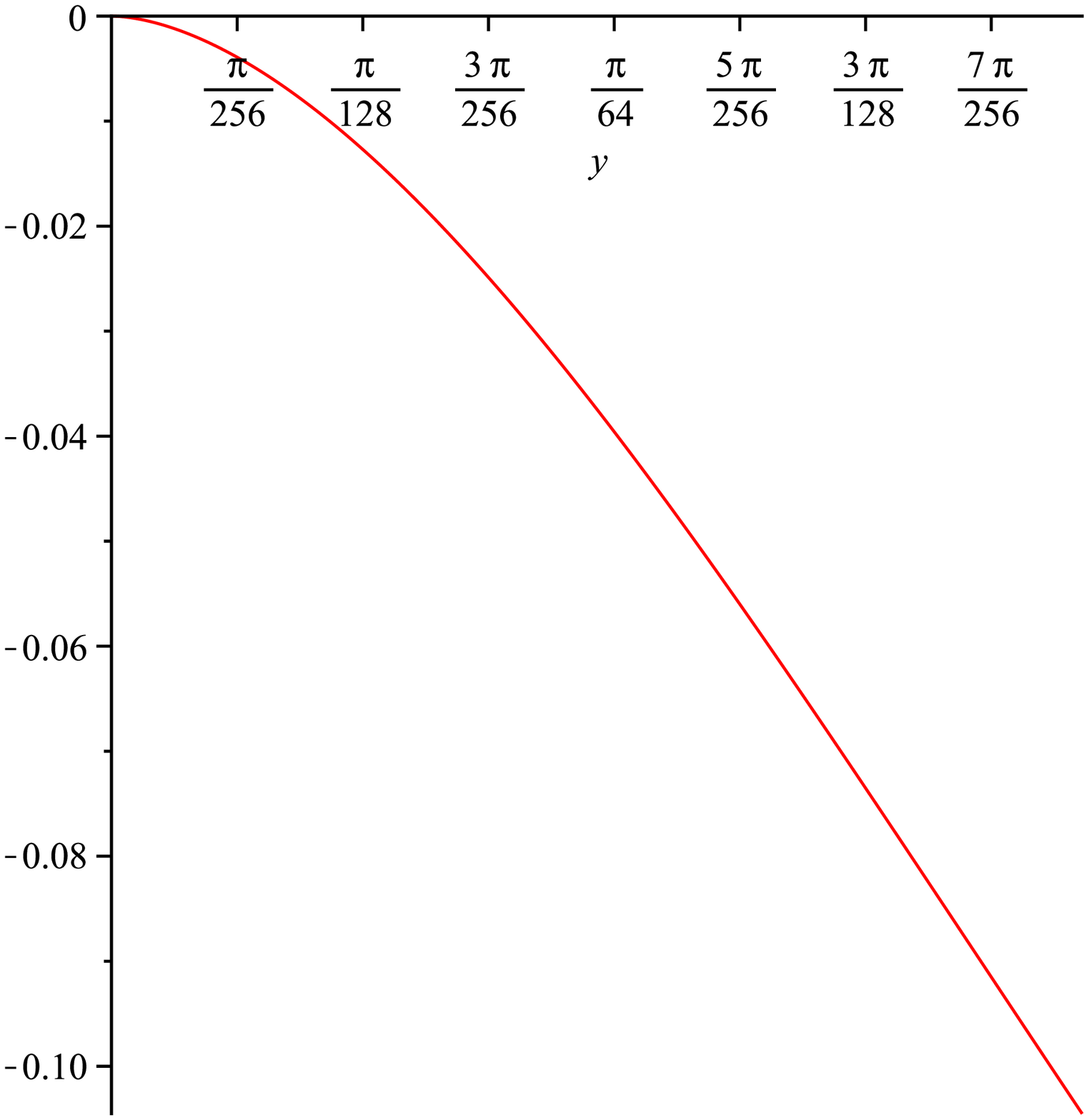,width=4cm,height=5cm,angle=0}
\psfig{file=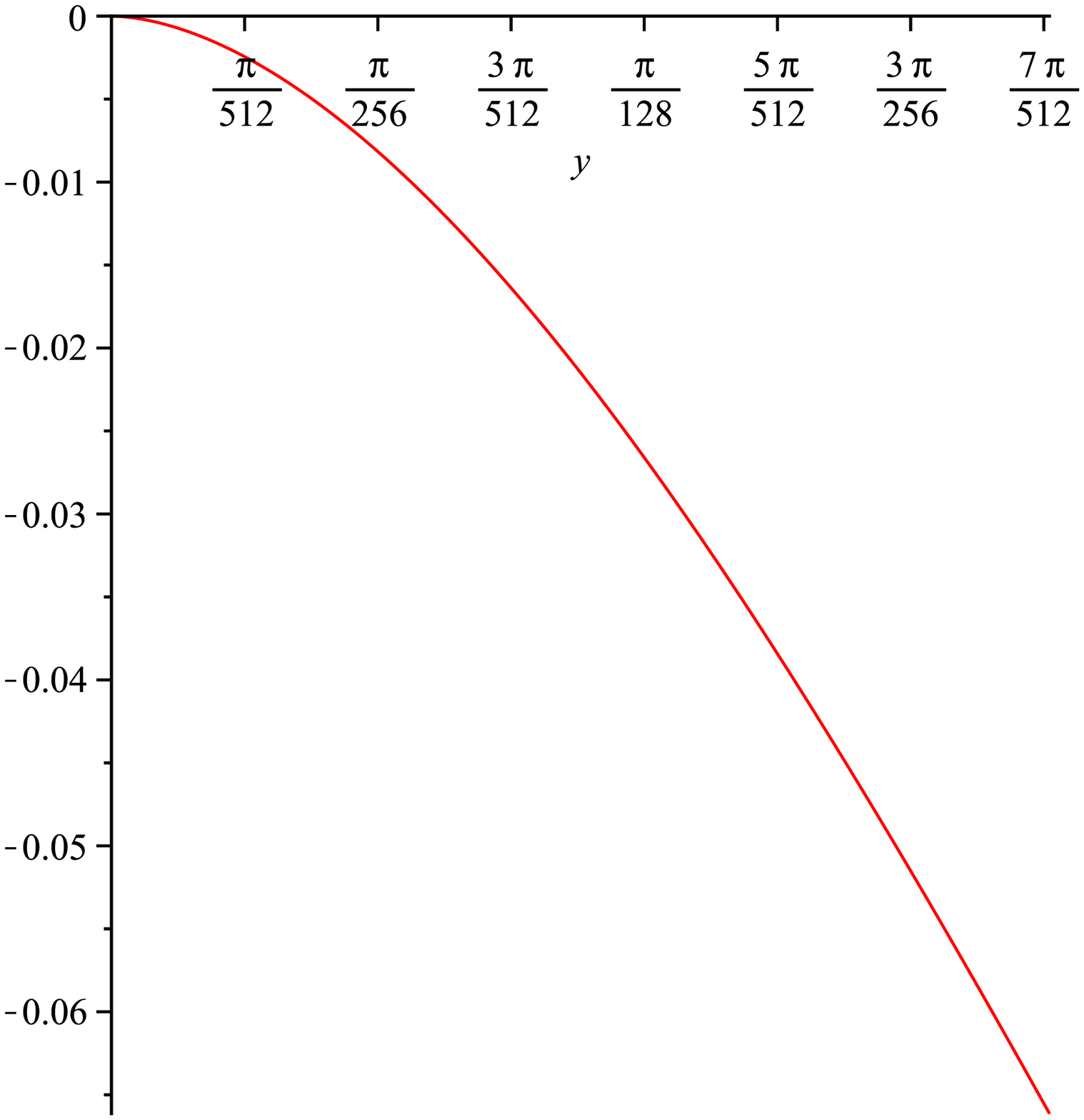,width=4cm,height=5cm,angle=0}
}
\caption{Graphs of
$\Phi(y)-(1+ay)$ on $y\in[0,e^{-2\pi\alpha}]$
\ for $\alpha=1/3$, $\alpha=3/8$, and
 $\alpha=1/2$; $a=2\cos(2\pi\alpha)$}
\label{fig:comp}
\end{figure}
This `proves' that inequality~\eqref{subs1} holds for all
$y\in[0,e^{-2\pi\alpha}]$ and we obtain, as a result, that
 the following theorem holds.
\begin{theorem}
\label{T:2}
For $\alpha=1/4$ and $\alpha=3/4$
$$
\|u\|^2_\infty\le\|A^{1/2}u\|\|u\|
\bigl(1-2e^{-4\pi\|A^{1/2}u\|/\|u\|}\bigr),
$$
while for $\alpha\in(1/4,3/4)$
$$
\|u\|^2_\infty\le\|A^{1/2}u\|\|u\|
\bigl(1+2\cos(2\pi\alpha)e^{-2\pi\|A^{1/2}u\|/\|u\|}\bigr).
$$
All constants are sharp.
\end{theorem}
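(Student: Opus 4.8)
The plan is to reduce the whole statement to two one-variable inequalities for the extremal function
\begin{equation*}
\mathbb V(D)=\sup\{|u(0)|^2\colon \|u\|^2=1,\ \|A^{1/2}u\|^2=D\},\qquad D\ge\alpha^2,
\end{equation*}
of \eqref{vD14}. By the variational formula \eqref{Vdmin}, which is obtained exactly as in Theorem~\ref{T:Var} but with $G$ now given explicitly by \eqref{Green}, one has $\mathbb V(D)=\min_{\lambda\ge-\alpha^2}G(\lambda)(\lambda+D)$. Since both right-hand sides in the theorem are $1$-homogeneous under $(\|u\|,\|A^{1/2}u\|)\mapsto(t\|u\|,t\|A^{1/2}u\|)$, the claimed bounds on $\|u\|_\infty^2$ are equivalent to the scalar estimates
\begin{equation*}
\mathbb V(D)\le\sqrt D\bigl(1-2e^{-4\pi\sqrt D}\bigr)\ \ (\alpha=\tfrac14,\tfrac34),\qquad
\mathbb V(D)\le\sqrt D\bigl(1+2\cos(2\pi\alpha)e^{-2\pi\sqrt D}\bigr)\ \ (\tfrac14<\alpha<\tfrac34),
\end{equation*}
for all $D\ge\alpha^2$, so the whole proof reduces to establishing these.

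For $\alpha=1/4$ (and $\alpha=3/4$, which is identical since $G$ depends only on $\cos(2\pi\alpha)$) I would specialize \eqref{Green} to $G(\lambda)=\frac1{2\sqrt\lambda}\tanh(2\pi\sqrt\lambda)$, enlarge the admissible set from $\lambda\ge-1/16$ to $\lambda\ge0$ (which only lowers the minimum), and rescale by $\mu=16\pi^2\lambda$ so as to land on the estimate $\min_{\lambda\ge0}\frac1{2\sqrt\lambda}\tanh\frac{\sqrt\lambda}2(\lambda+D)<\sqrt D(1-2e^{-\sqrt D})$ proved in \cite{IZ}; undoing the rescaling gives $\mathbb V(D)<\sqrt D(1-2e^{-4\pi\sqrt D})$. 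The case $\alpha=1/2$, where $G(\lambda)=\frac1{2\sqrt\lambda}\tanh(\pi\sqrt\lambda)$, is handled in exactly the same way and is automatically consistent with the $(1/4,3/4)$-formula since $\cos(2\pi\cdot\tfrac12)=-1$. I expect these steps to be routine.

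For $\alpha\in(1/4,3/4)$ the strategy mirrors the proof of \eqref{main} in Theorem~\ref{T:est}: rather than use the inaccessible exact minimizer of \eqref{Vdmin}, I would substitute a carefully truncated trial value. Writing $D(\lambda)=-G(\lambda)/G'(\lambda)-\lambda$ and expanding via \eqref{Green} gives $D(\lambda)=\lambda-4\pi a\lambda^{3/2}e^{-2\pi\sqrt\lambda}+O(e^{-4\pi\sqrt\lambda})$ with $a:=2\cos(2\pi\alpha)$; inverting and truncating produces the explicit candidate $\lambda_*(D)=D(1+4\pi a\sqrt D\,e^{-2\pi\sqrt D})$. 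Plugging $\lambda_*(D)$ into $G(\lambda)(\lambda+D)$ and setting $y:=e^{-2\pi\sqrt D}$ collapses the target to the single inequality $\Phi(y)<1+ay$ on $y\in[0,e^{-2\pi\alpha}]$, with $\Phi$ the displayed function built from nested powers $y^{2\sqrt{1-2ay\log y}}$. Near $y=0$ this is clear from $\Phi(y)=1+ay+\bigl(a^2-2-\tfrac12a^2(\log y)^2\bigr)y^2+O(y^3)$, whose $y^2$-coefficient is negative for $y$ small.

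The main obstacle is precisely this last step: upgrading $\Phi(y)<1+ay$ from a neighbourhood of $0$ to the entire interval $[0,e^{-2\pi\alpha}]$. Unlike the $\alpha=1/4,\,1/2$ cases, $\Phi$ carries no clean monotonicity or convexity structure because of its $\lambda$-dependent exponents, so a purely analytic argument does not seem available; I would verify the inequality on this short, $\alpha$-dependent interval by a computer-assisted check — the plots of $\Phi(y)-(1+ay)$ for representative $\alpha$ in Figure~\ref{fig:comp}, or, if full rigor is required, an interval-arithmetic estimate — which is the sense in which the argument is completed. Finally, sharpness of every constant follows by testing at the trial functions $u_\lambda$ of \eqref{uextrem} and letting $\lambda\to\infty$ (equivalently, by comparison with the $D\to\infty$ expansion of $\mathbb V(D)$, in the spirit of \eqref{asympV}), which shows the exponential correction terms cannot be enlarged.
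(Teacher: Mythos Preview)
Your proposal is correct and follows essentially the same approach as the paper's own proof: reduction to the scalar inequality for $\mathbb V(D)$ via \eqref{Vdmin}, handling $\alpha=1/4,1/2$ by rescaling to the $\tanh$-inequality \eqref{prev} of \cite{IZ}, and for general $\alpha\in(1/4,3/4)$ substituting the truncated trial $\lambda_*(D)=D(1+4\pi a\sqrt D\,e^{-2\pi\sqrt D})$, reducing to $\Phi(y)<1+ay$ on $[0,e^{-2\pi\alpha}]$, and completing the verification on the full interval by computer-assisted graphical evidence (exactly the ``proof'' via Figure~\ref{fig:comp} that the paper gives). One minor slip: passing from $\lambda\ge-1/16$ to $\lambda\ge0$ is a \emph{restriction} of the admissible set, which can only \emph{raise} the minimum and hence yields the desired upper bound on $\mathbb V(D)$; your parenthetical has the direction reversed.
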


\setcounter{equation}{0}
\section{Carlson--Landau inequalities}
\label{sec5}

One-dimensional inequalities  of $L_\infty$-$L_2$-$L_2$-type
with various boundary conditions are closely connected with
Carlson--Landau
inequalities and their various improvements.


\subsection*{Carlson--Landau inequality with
correction term}\label{ss1}

In the next theorem we
show that both inequality~\eqref{magn2} in the symmetric case
$\alpha=1/2$,  and inequality~\eqref{inter} are equivalent to \eqref{Landau},
while their refined forms \eqref{inter1} and  \eqref{magn3}
 are, in fact, equivalent and
provide  a sharp exponential correction term
 to Landau's improvement of Carlson's inequality.
 A sharp second-order Carlson-type inequality
 in the flavor of \eqref{Landau} is also given.
 The notation introduced in~\eqref{notation} is used in the
 following theorem.
\begin{theorem}\label{T:Landau}
The following inequality holds
\begin{equation}
\label{Landau1}
\biggl(\,\sum_{k=1}^\infty a_k\biggr)^2\le\pi\|a\|\|a\|_1
\left(1-2e^{-2\pi\|a\|_1/\|a\|}\right),
\end{equation}
where all constants are sharp and no extremals exist.

In the second-order case it holds
\begin{equation}\label{Landau2}
\biggl(\,\sum_{k=1}^\infty a_k\biggr)^2\le
\frac{\sqrt{2}\,\pi}{\sqrt[4]{27}}\coth(\pi/2)
\|a\|^{3/2}
\|a\|_2^{1/2}.
\end{equation}
Inequality~\eqref{Landau2} saturates at a unique
extremal
\begin{equation}\label{extremal4}
a_k=\frac1{(2k-1)^4+4}\,,
\end{equation}
\end{theorem}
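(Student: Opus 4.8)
The plan is to deduce both \eqref{Landau1} and \eqref{Landau2} from one‑dimensional interpolation inequalities on $(0,1)$, using exactly the substitution of \S\ref{sec1}. Since $\|a\|,\|a\|_1,\|a\|_2$ depend only on $|a_k|$ and $(\sum a_k)^2\le(\sum|a_k|)^2$, I may assume $a_k\ge0$; I may also assume $\|a\|_1<\infty$ in the first case and $\|a\|_2<\infty$ in the second (otherwise the right‑hand side is infinite, because $(k-1/2)^2\ge1/4$, and then the Cauchy--Schwarz bound $\sum_k a_k\le\|a\|_1\bigl(\sum_k(k-1/2)^{-2}\bigr)^{1/2}$ shows that all quantities in sight are finite). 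For such a sequence set, as in \eqref{u(x)},
$$u(x)=\sqrt2\sum_{k\ge1}(-1)^{k+1}a_k\sin((2k-1)\pi x),\qquad x\in[0,1].$$
Then $u\in H^1_0(0,1)$ (indeed $u(0)=u(1)=0$ since $\sin((2k-1)\pi)=0$), and because $a_k\ge0$ one has $|u(x)|\le\sqrt2\sum_k a_k$ with equality at $x=1/2$, so $\|u\|_\infty^2=2(\sum_k a_k)^2$; by orthonormality (cf.\ \eqref{Parceval}) $\|u\|^2=\|a\|^2$, $\|u'\|^2=4\pi^2\|a\|_1^2$, and likewise $\|u''\|^2=\pi^4\sum_k(2k-1)^4a_k^2=16\pi^4\|a\|_2^2$.

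For \eqref{Landau1} I would insert these identities into the refined first‑order inequality \eqref{inter1} with $L=1$: this gives $2(\sum_k a_k)^2\le\|a\|\cdot2\pi\|a\|_1\,(1-2e^{-2\pi\|a\|_1/\|a\|})$, which is \eqref{Landau1}. For \eqref{Landau2} I would instead use the sharp second‑order interpolation inequality of \cite{IZ} for functions with $u(0)=u(1)=u''(0)=u''(1)=0$ (which $u$ satisfies, being moreover symmetric about $1/2$), in the form $\|u\|_\infty^2\le\frac{\sqrt2}{\sqrt[4]{27}}\coth(\pi/2)\,\|u\|^{3/2}\|u''\|^{1/2}$; substituting $\|u\|_\infty^2=2(\sum_k a_k)^2$, $\|u\|=\|a\|$, $\|u''\|=4\pi^2\|a\|_2$ and dividing by $2$ yields \eqref{Landau2}.

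It remains to handle sharpness and extremals. For \eqref{inter1} the extremals of the underlying constrained maximization problem, with the maximum forced to the worst point $x=1/2$, are the normalized Green's functions $G_\lambda(\cdot,1/2)$ of $-d^2/dx^2+\lambda$ on $(0,1)$, whose $\sin((2k-1)\pi x)$‑coefficients are positive multiples of $((2k-1)^2\pi^2+\lambda)^{-1}$; thus they lie in the range of the substitution, and testing \eqref{Landau1} at $a_k=((2k-1)^2\pi^2+\lambda)^{-1}$ and letting $\lambda\to\infty$ drives the ratio of the two sides to $1$ (this is exactly the sharpness, with absence of extremals, of \eqref{inter1}). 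Any equality in \eqref{Landau1} would make the associated $u$ an extremal of \eqref{inter1}, which has none, so \eqref{Landau1} has no extremals. For \eqref{Landau2} the supremum $\sup_{\lambda>0}\lambda^{3/4}G(\lambda)$, with $G(\lambda)=2\sum_k((2k-1)^4\pi^4+\lambda)^{-1}$ the diagonal Green's function of $d^4/dx^4+\lambda$, is attained at a finite $\lambda_*$, and one computes $\lambda_*=4\pi^4$; the corresponding extremal has $\sin((2k-1)\pi x)$‑coefficients proportional to $((2k-1)^4\pi^4+4\pi^4)^{-1}\propto((2k-1)^4+4)^{-1}$, which gives the unique extremal $a_k=1/((2k-1)^4+4)$.

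The Parseval identities and the matching of constants are routine; the real work — which is supplied by \cite{IZ} — is the closed‑form evaluation of $G(\lambda)=2\sum_k((2k-1)^4\pi^4+\lambda)^{-1}$ (by Poisson summation or residues, producing $\coth$ and $\cot$ terms) together with the verification that $\lambda\mapsto\lambda^{3/4}G(\lambda)$ is maximized precisely at $\lambda_*=4\pi^4$, where the $\cot$ term vanishes and the value $\frac{\sqrt2}{\sqrt[4]{27}}\coth(\pi/2)$ emerges; equivalently, the identity $\sum_{k\ge1}((2k-1)^4+4)^{-1}=\frac{\pi}{16}\coth(\pi/2)$. I expect this to be the only genuinely non‑trivial point.
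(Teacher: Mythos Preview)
Your proof is correct and follows essentially the same route as the paper: for \eqref{Landau1} you substitute the sine series \eqref{u(x)} into the refined first-order inequality \eqref{inter1}, which is exactly the ``alternative and simpler proof'' the paper mentions (the paper's primary derivation goes instead through the magnetic inequality \eqref{magn3} with $\alpha=1/2$, but these are equivalent); for \eqref{Landau2} both you and the paper invoke the sharp second-order inequality \eqref{secondorder} from \cite{IZ} and read off the extremal. Your discussion of sharpness and of the value $\lambda_*=4\pi^4$ is more explicit than the paper's, which simply defers to \cite{IZ}.
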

\begin{proof}
Given a non-negative sequence $\{a_k\}_{k=1}^\infty$
we construct the sequence $\{b_k\}_{k=-\infty}^\infty$
by setting for $k=0,1,\dots$,
$$
b_0=b_{-1}:=a_1,\ b_1=b_{-2}:=a_2,\dots,
b_k=b_{-(k+1)}:=a_{k+1},\dots\,.
$$
Then for a periodic function
$$
u(x)=\sum_{k\in\mathbb{Z}}b_ke^{ikx}
$$
we have
$$
\|u\|_\infty=\sum_{k\in\mathbb{Z}}b_k=2\sum_{k=1}^\infty a_k,\quad
\|u\|^2=2\pi\sum_{k\in\mathbb{Z}}b_k^2=
4\pi\sum_{k=1}^\infty a_k^2,\\
$$
and
$$
\aligned
\|A^{1/2}u\|^2=2\pi\sum_{k=0}^\infty(k+1/2)^2b_k^2+
2\pi\sum_{k=-1}^{-\infty}(k+1/2)^2b_k^2=\\=
2\pi\sum_{k=1}^\infty(k-1/2)^2a_k^2+
2\pi\sum_{k=1}^{\infty}(k-1/2)^2b_{-k}^2=
4\pi\sum_{k=1}^\infty (k-1/2)^2a_k^2.
\endaligned
$$

Substituting this into the second inequality in \eqref{magn3}
gives~\eqref{Landau1}.

An alternative and a simpler proof was given in \S~\ref{sec1}
by using  \eqref{inter}
and its refinement~\eqref{inter1}.

As for the second-order inequality, for $u\in H^2(0,L)\cap H^1_0(0,L)$
we have the sharp inequality~\cite[Theorem 3.9]{IZ}
\begin{equation}\label{secondorder}
\|u\|_{\infty}^2\le\frac{\sqrt2}{\sqrt[4]{27}}\coth(\pi/2)
\|u\|^{3/2}\|u''\|^{1/2},
\end{equation}
saturating at a unique extremal function
(for $L=1$)
\begin{equation}\label{extremal}
u_*(x)=\sum_{k=1}^\infty\frac{(-1)^{k+1}\sin(2k-1)\pi x}
{(2k-1)^4+4}\,.
\end{equation}
Setting $L=1$, substituting $u(x)$ from~\eqref{u(x)}
into~\eqref{secondorder} and taking into account that $\|u''\|^2=16\pi^4
\sum_{k=1}^\infty(k-1/2)^4 a_k^4$ we obtain inequality
\eqref{Landau1}, while  the  unique extremal~\eqref{extremal4} is
produced by~\eqref{extremal}.

We finally observe that unlike all the previous Carlson-type inequalities
(namely, \eqref{Carlorig}, \eqref{Carl}, \eqref{Carl3},
\eqref{Landau}, \eqref{Landau1}) inequality ~\eqref{Landau2} has
a unique extremal~\eqref{extremal4}.
\end{proof}

\subsection*{Intermediate Carlson--Landau inequalities}\label{sss2}
 In conclusion we consider the  family
of intermediate  Carlson--Landau-type inequalities~\eqref{interCLalpha}
in the whole range $\alpha\in[0,1)$. In the case $\alpha=1/2$
the Carlson--Landau inequality was supplemented with an
exponentially small remainder term in Theorem~\ref{T:Landau}.

We now consider the region $\alpha\in[0,1/2)$. Obviously,
$k(\alpha)=\pi$ and we show below that there exists a (sharp) correction term:
\begin{equation}\label{interCL}
\left(\sum_{k=1}^\infty a_k\right)^2\le\pi\left(\sum_{k=1}^\infty a_k^2\right)^{1/2}\left(\sum_{k=1}^\infty
(k-\alpha)^2a_k^2\right)^{1/2}-
(1-2\alpha)\sum_{k=1}^\infty a_k^2.
\end{equation}
For $\alpha=0$ it is the classical Carlson inequality
supplemented with a lower order term in~\eqref{Carl}.

To prove~\eqref{interCL} we apply our method directly to sequences without going over to functions. We consider
the variational problem: for $D\ge (1-\alpha)^2$ find
\begin{equation}\label{VDseq}
\mathbb V(D,\alpha):=\sup\left\{\left(\sum_{k=1}^\infty a_k\right)^2\colon
\ \sum_{k=1}^\infty a_k^2=1, \
 \sum_{k=1}^\infty (k-\alpha)^2a_k^2=D\right\}.
\end{equation}
In complete analogy with~\eqref{freelambda} and
Theorem~\ref{T:Var} we find that
\begin{equation}\label{VDalpha}
\mathbb V(D,\alpha)=\min_{\alpha\ge-(1-\alpha)^2}
(\lambda+D)G(\lambda),
\end{equation}
where
\begin{equation}\label{Galpha}
G(\lambda)=\sum_{k=1}^\infty\frac1{(k-\alpha)^2+\lambda}.
\end{equation}
Using the Euler $\psi$-function $\psi(z)=\frac d{dz}\log \Gamma(z)$ and its representation
$$
\psi(z)=-\gamma+\sum_{n=0}^\infty\left(\frac1{n+1}-\frac1{n+z}\right)
$$
we factorize the denominator in~\eqref{Galpha} and find
\begin{equation}\label{Fal}
G(\lambda)=\frac{i(\psi(1-\alpha-i\sqrt{\lambda})-
\psi(1-\alpha+i\sqrt{\lambda}))}{2\sqrt{\lambda}}=:
\frac1{2\sqrt{\lambda}}F(\alpha,\lambda)\,.
\end{equation}
Using the Stirling expansion for the $\psi$-function
$$
\psi(z)=\ln z-\frac1{2z}-\frac1{12z^2}+O(z^{-3}),
$$
we get as $\lambda\to\infty$
$$
G(\lambda)=\frac{\pi}2\lambda^{-1/2}-\frac12(1-2\alpha)\lambda^{-1}+O(\lambda^{-2}).
$$
For the unique point of a minimum $\lambda(D)$ in~\eqref{VDalpha} we have the equation
$$
D=-\frac{G(\lambda)}{G'(\lambda)}-\lambda=
\lambda+\frac{2(1-2\alpha)}\pi\lambda^{1/2}+
\frac{4(1-2\alpha)^2}{\pi^2}+O(\lambda^{-1/2}),
$$
giving
\begin{equation}\label{expan-lam}
\lambda(D)=D-\frac{2(1-2\alpha)}\pi D^{1/2}-2(1-2\alpha)^2+O(D^{-1/2}).
\end{equation}
Substituting this into $\mathbb V(D,\alpha)=-\frac{G(\lambda(D))^2}{G'(\lambda(D))}$
we obtain the expansion
$$
 \mathbb{V}(D,a)=\pi D^{1/2}-(1-2a)-
 \frac{(2a-1)^2}{2\pi}D^{-1/2}+O(D^{-1}).
$$
The third term here is negative, hence
\begin{equation}\label{negative}
 \mathbb{V}(D,a)<\pi D^{1/2}-(1-2a)
\end{equation}
for all sufficiently large $D$. To see that this inequality holds for all $D$
we truncate the expansion~\eqref{expan-lam} by setting
$$
\lambda_*(D):=D-\frac{2(1-2\alpha)}\pi D^{1/2},
$$
and consider the explicitly given function
$$
 \mathbb{V}_*(D,\alpha):=(\lambda_*(D)+D)G(\lambda_*(D)).
 $$
Since by definition
$ \mathbb{V}(D,\alpha)\le\mathbb{V}_*(D,\alpha)$,  to establish~\eqref{negative} for all $D$
it suffices to show that the following function is negative
$$
R(D,\alpha):= \mathbb{V}_*(D,\alpha)-\pi D^{1/2}+(1-2a)
$$
for all $D\ge(1-\alpha)^2$.
We have the asymptotic expansion
$$
R(D,\alpha)=-(1-2\alpha)^2D^{-1/2}+O(D^{-1})
$$
giving that $R(D,\alpha)<0$ for all sufficiently large $D$.
The graphs of $R(D,\alpha)$ for different $\alpha$ are shown
in Fig.~\ref{fig:comp2}, where one can see a very rapid convergence to $0$
for $\alpha=1/2$.
\begin{figure}[htb]
\centerline{\psfig{file=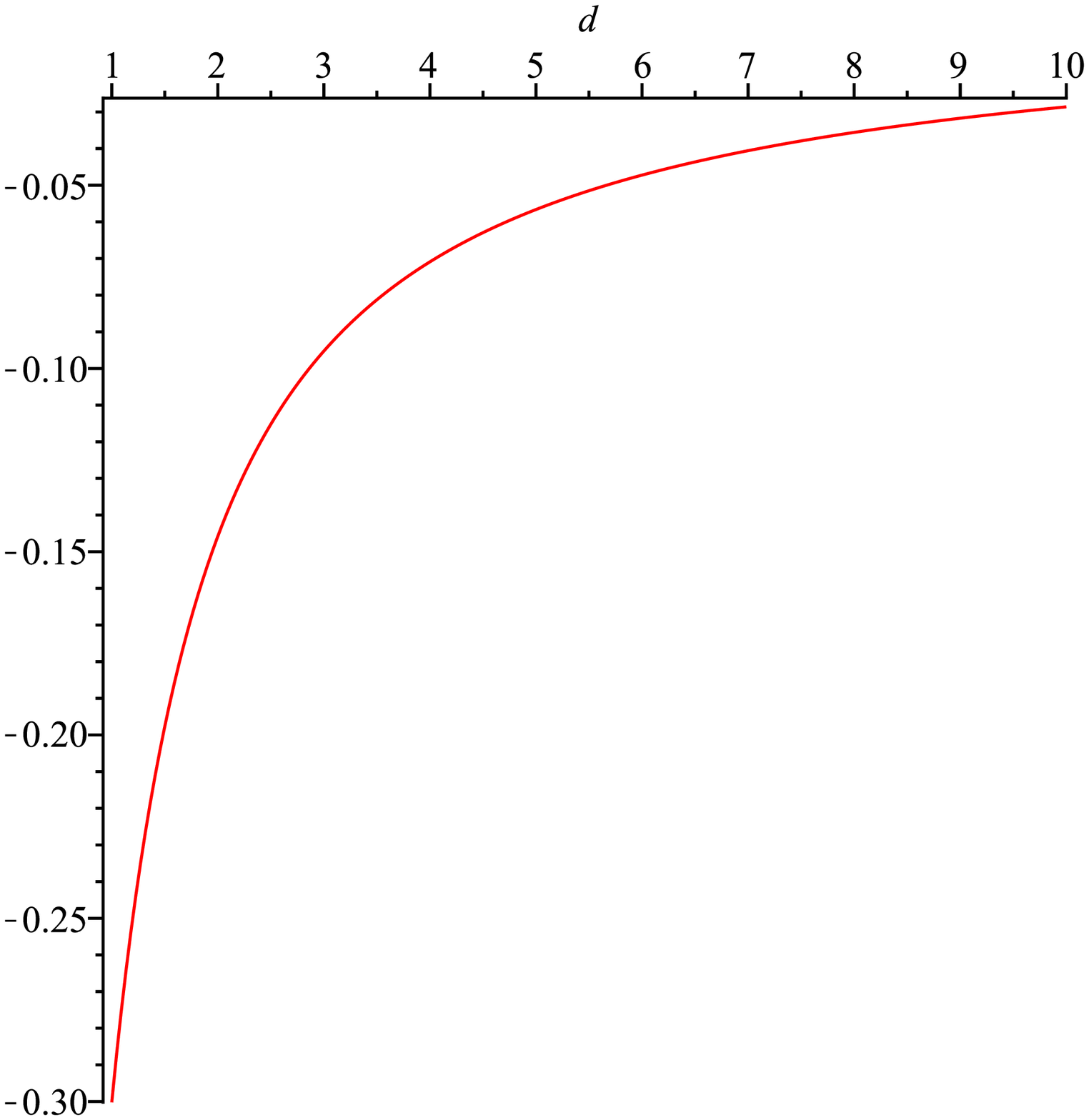,width=3cm,height=5cm,angle=0} 
\psfig{file=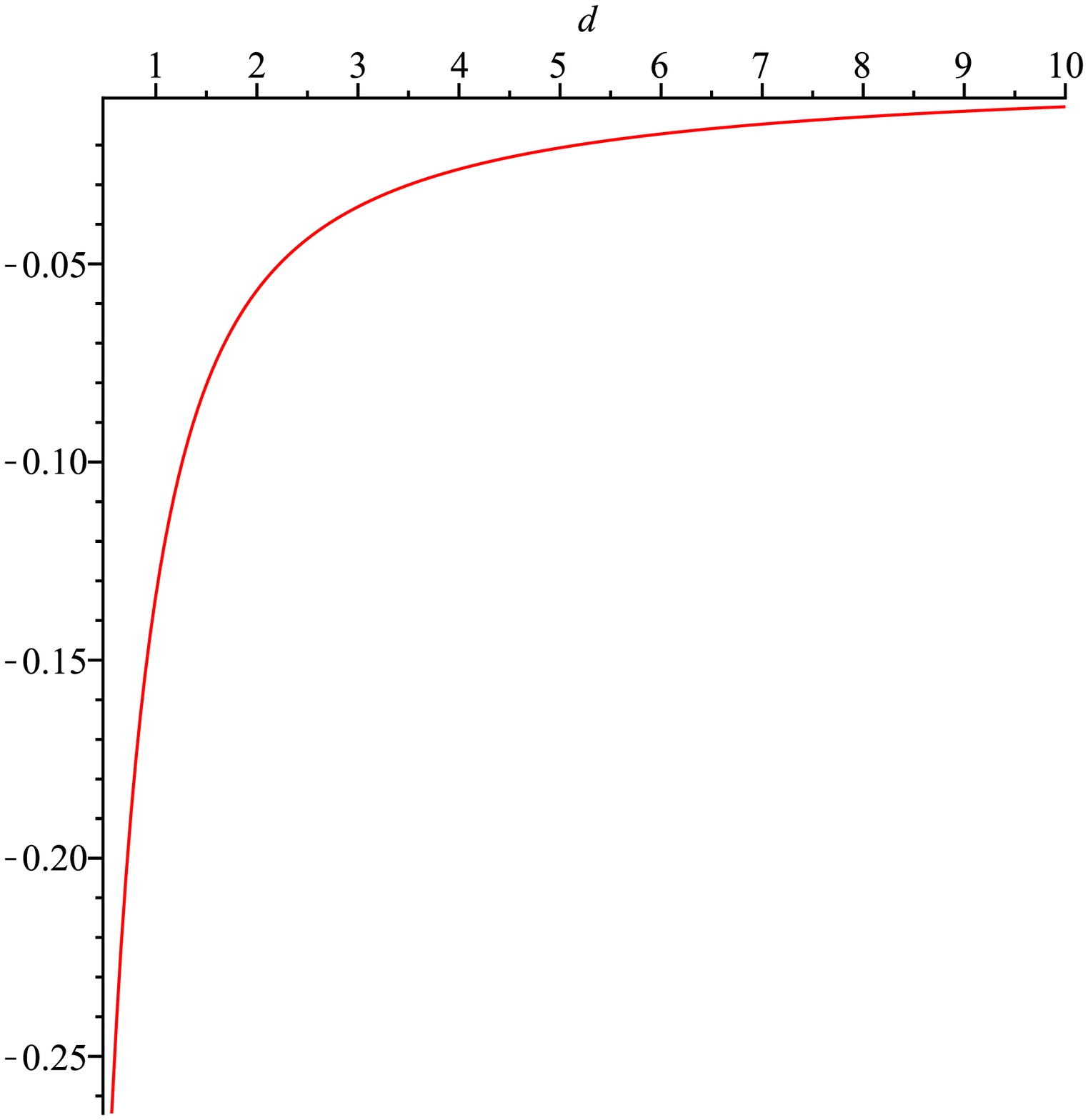,width=3cm,height=5cm,angle=0}
\psfig{file=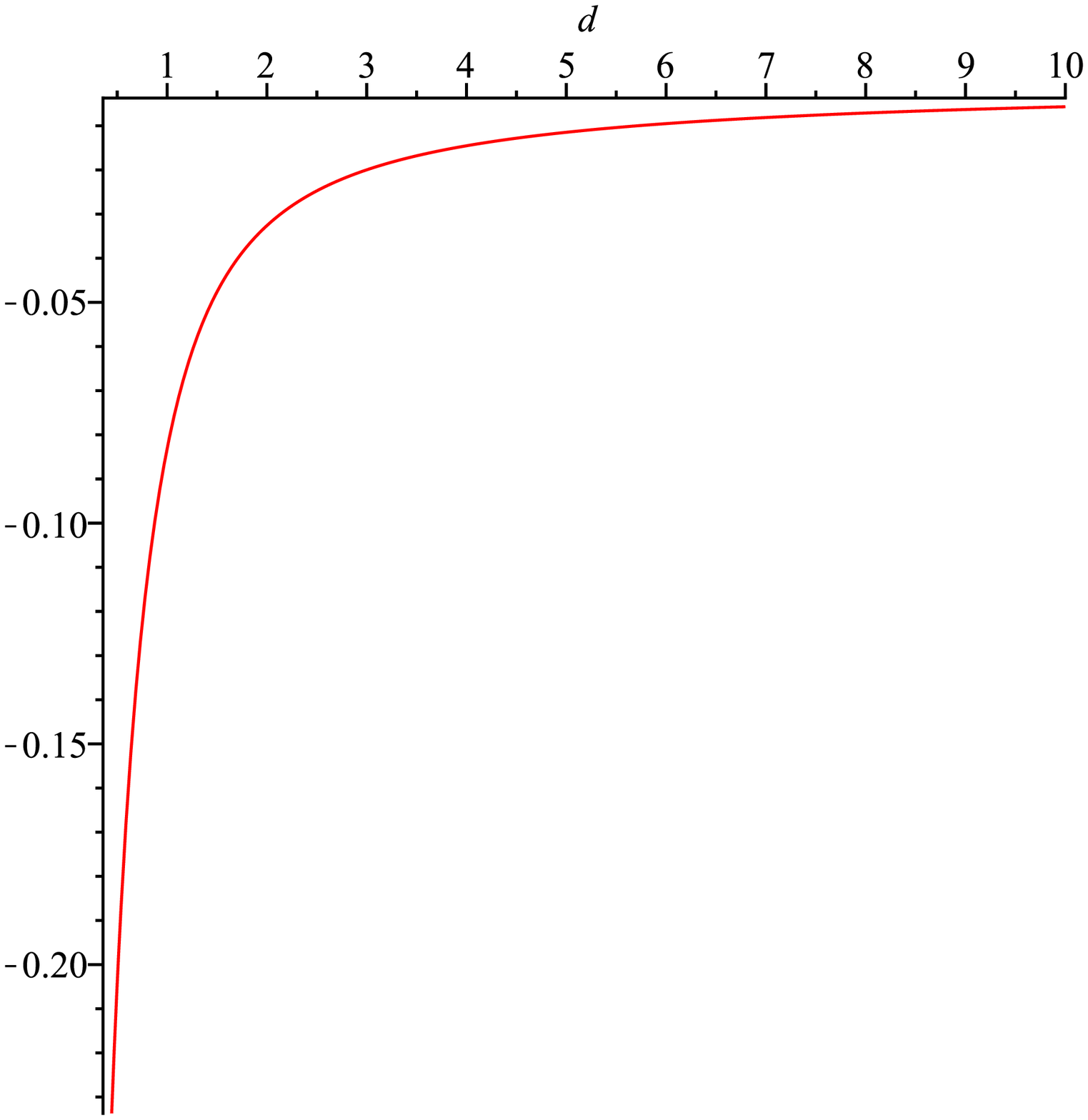,width=3cm,height=5cm,angle=0}
\psfig{file=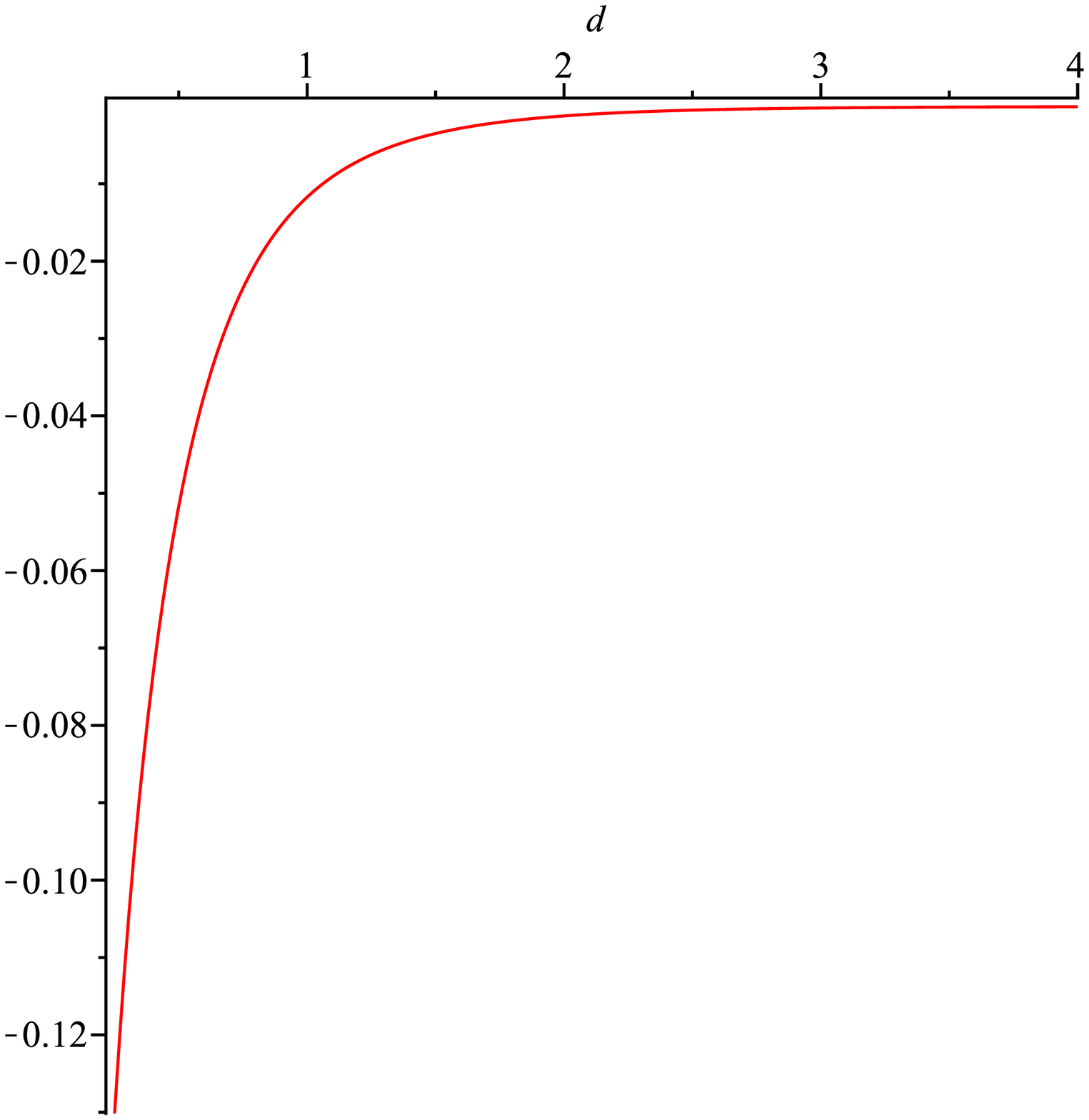,width=3cm,height=5cm,angle=0}
}
\caption{Graphs of
$R(D,\alpha)$
\ for $\alpha=0$, $\alpha=1/4$,
$\alpha=1/3$, and $\alpha=1/2$.}
\label{fig:comp2}
\end{figure}

The case when $\alpha\in(1/2,1)$ is qualitatively different and
very similar to the `magnetic' inequalities in
 Theorem~\ref{Th:magnetic}.
Namely, $k(\alpha)>\pi$ and there exists a (unique) extremal
in~\eqref{interCLalpha}. In fact, repeating word for word the argument in
Remark~\ref{R:direct} (replacing $\sum_{k\in\mathbb{Z}}$ by
$\sum_{k\in\mathbb{N}}$) we obtain that
$$
k(\alpha)=2\sup_{\lambda>0}\sqrt{\lambda}G(\lambda)=
\sup_{\lambda>0}F(\alpha,\lambda),
$$
where $F$ is defined in~\eqref{Fal}.
Since $\lim_{\lambda\to\infty}F(\alpha,\lambda)=\pi$,
it follows that $k(\alpha)\ge\pi$. The supremum is, in fact,
a maximum, that is attained at a (unique) point
$\lambda_*(\alpha)$, for which $\lambda_*(\alpha)\sim (1-\alpha)^2$
and $k(\alpha)\sim 1/(1-\alpha)$ as $\alpha\to 1^-$
(this easily follows from the asymptotic behavior of
$\psi(z)$ near $0$: $\psi(z)=-\gamma-1/z+O(z^2)$), see
Fig.~\ref{fig:comp3}.
\begin{figure}[htb]
\centerline{\psfig{file=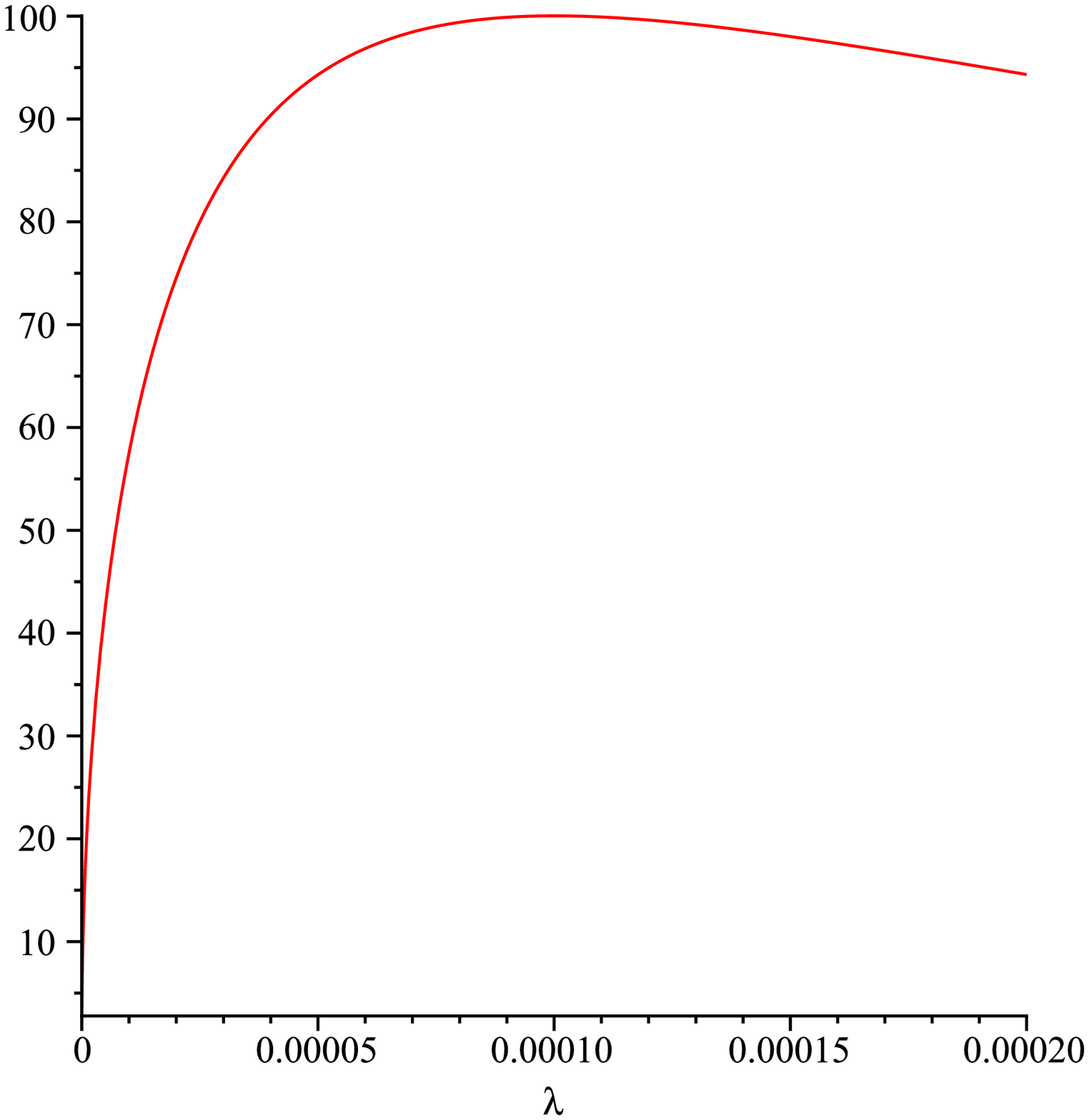,width=4.2cm,height=5cm,angle=0}
\psfig{file=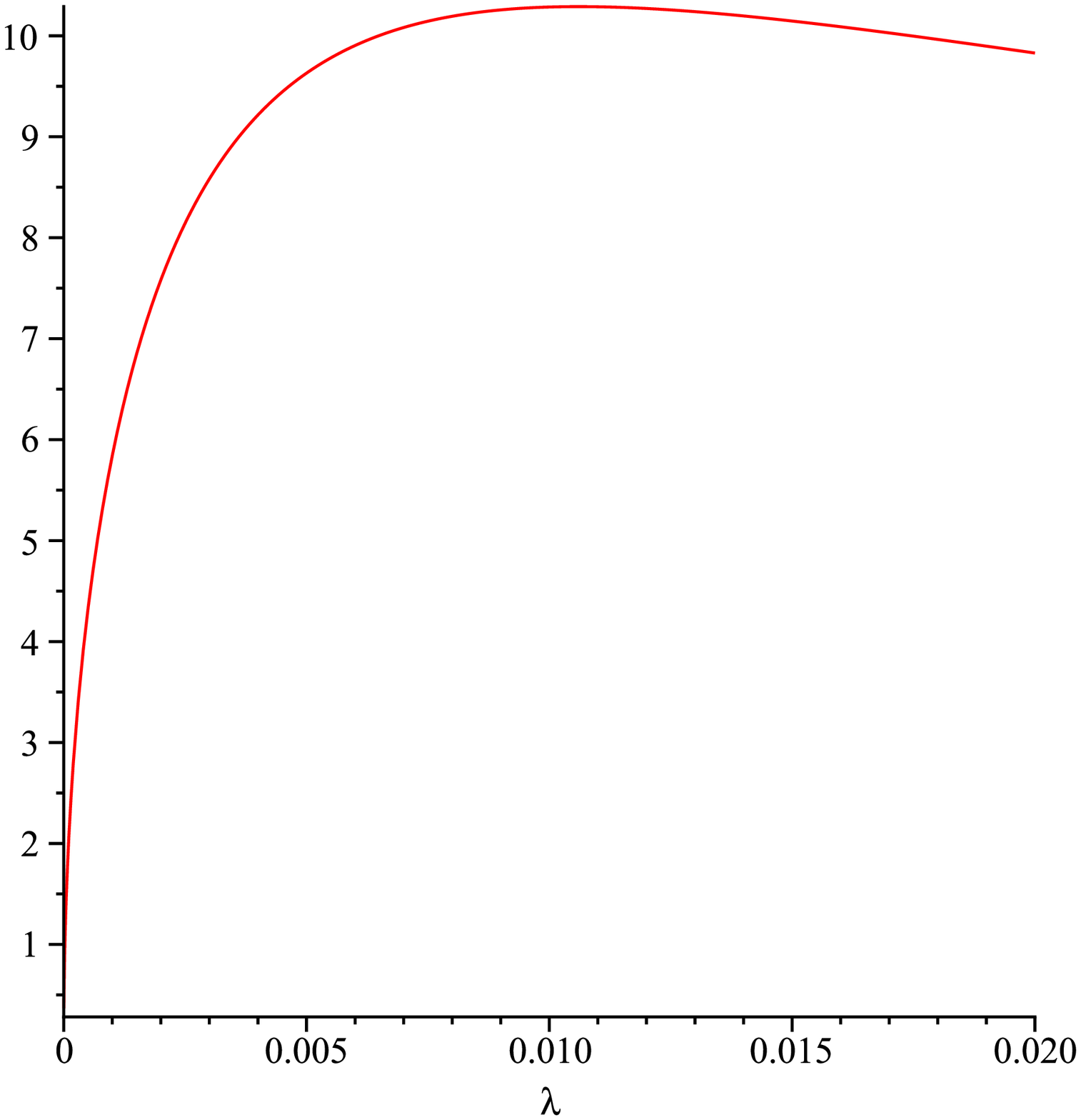,width=4.2cm,height=5cm,angle=0}
\psfig{file=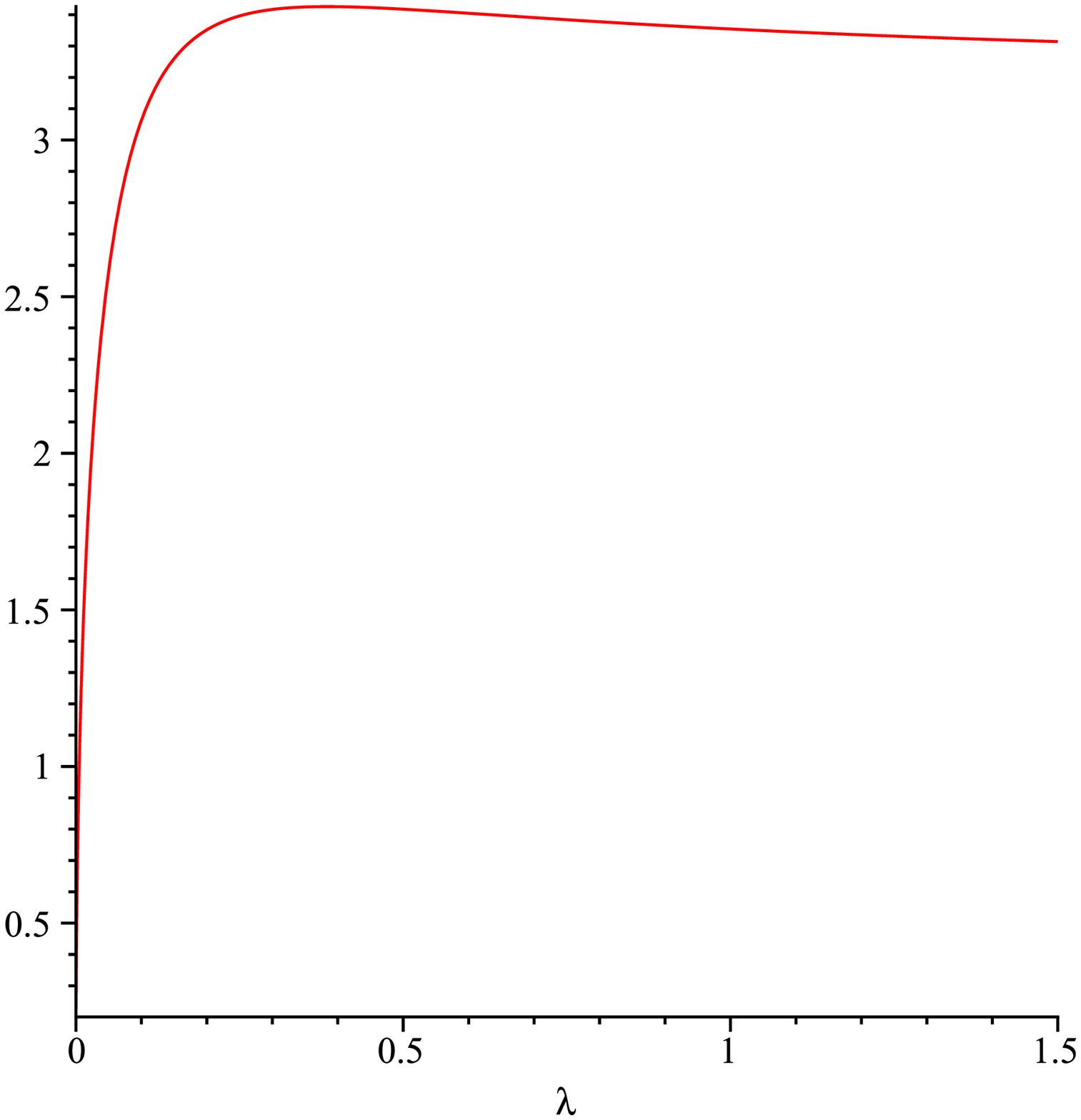,width=4.2cm,height=5cm,angle=0}
}
\caption{Global maximums of $F(\alpha, \lambda)$
 for $\alpha=0.99$, $\alpha=0.9$,
$\alpha=0.6$.}
\label{fig:comp3}
\end{figure}

Thus, with the help of reliable computer calculations we obtain the following result.
\begin{theorem}\label{Th:Intermediate}
Inequality \eqref{interCL} holds for
$\alpha\in[0,1/2)$. The constants are sharp,
no extremals exist.

For $\alpha\in(1/2,1)$ the sharp constant in~\eqref{interCLalpha}
is
$$
k(\alpha)=\max_{\lambda>0}
i\bigl(\psi(1-\alpha-i\sqrt{\lambda})-
\psi(1-\alpha+i\sqrt{\lambda})\bigr).
$$
The maximum is attained at a (unique) point $\lambda_*(\alpha)$
and there exists a unique extremal
$$
a_k=\frac1{(k-\alpha)^2+\lambda_*(\alpha)}.
$$
\end{theorem}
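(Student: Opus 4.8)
The plan is to handle the two ranges $\alpha\in[0,1/2)$ and $\alpha\in(1/2,1)$ separately, the endpoint $\alpha=1/2$ being already covered by Theorem~\ref{T:Landau}, and in both cases to reduce everything to the scalar maximization problem \eqref{VDseq} together with its variational representation \eqref{VDalpha}. For $\alpha\in[0,1/2)$, homogeneity turns \eqref{interCL} into the statement $\mathbb V(D,\alpha)<\pi D^{1/2}-(1-2\alpha)$ for every admissible $D\ge(1-\alpha)^2$, i.e. into \eqref{negative}. Since \eqref{VDalpha} is a minimum over $\lambda$, it suffices to exhibit one explicit substitution $\lambda=\lambda_*(D)$ with $(\lambda_*(D)+D)\,G(\lambda_*(D))<\pi D^{1/2}-(1-2\alpha)$; I would take the truncation $\lambda_*(D):=D-\tfrac{2(1-2\alpha)}{\pi}D^{1/2}$ of the exact expansion \eqref{expan-lam} (one checks $\lambda_*(D)\ge0$ on the relevant range, so $\sqrt{\lambda_*}$ is real). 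This converts the claim into the single inequality $R(D,\alpha):=\mathbb V_*(D,\alpha)-\pi D^{1/2}+(1-2\alpha)<0$ for $D\ge(1-\alpha)^2$, where $\mathbb V_*(D,\alpha)=(\lambda_*(D)+D)\,G(\lambda_*(D))$ is fully explicit through the closed form \eqref{Fal}. The expansion $R(D,\alpha)=-(1-2\alpha)^2D^{-1/2}+O(D^{-1})$ disposes of large $D$; the remaining bounded range of $D$ (uniformly in $\alpha\in[0,1/2)$) is checked directly from \eqref{Fal}, as illustrated in Fig.~\ref{fig:comp2}.

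Sharpness and non-existence of extremals for $\alpha\in[0,1/2)$ then follow from the finer expansion $\mathbb V(D,\alpha)=\pi D^{1/2}-(1-2\alpha)-\tfrac{(1-2\alpha)^2}{2\pi}D^{-1/2}+O(D^{-1})$ established above: testing \eqref{interCL} on the sequences $a_k=1/((k-\alpha)^2+\lambda)$ and letting $\lambda\to\infty$ shows both constants are optimal, while the fact that $\mathbb V(D,\alpha)$ is actually attained — at $a_k=1/((k-\alpha)^2+\lambda(D))$, the equality case of the Cauchy--Schwarz step underlying \eqref{freelambda} and \eqref{VDalpha} — yet lies strictly below $\pi D^{1/2}-(1-2\alpha)$, forces \eqref{interCL} to be strict for every nonzero sequence.

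For $\alpha\in(1/2,1)$ I would rerun the direct argument of Remark~\ref{R:direct} verbatim, with $\sum_{k\in\mathbb N}$ in place of $\sum_{k\in\mathbb Z}$, to obtain $k(\alpha)=2\sup_{\lambda>0}\sqrt\lambda\,G(\lambda)=\sup_{\lambda>0}F(\alpha,\lambda)$ with $F$ and $G$ as in \eqref{Fal}. Here $F(\alpha,\lambda)\to0$ as $\lambda\to0^+$ and $F(\alpha,\lambda)\to\pi$ as $\lambda\to\infty$, while the Stirling expansion of $\psi$ gives $F(\alpha,\lambda)=\pi-(1-2\alpha)\lambda^{-1/2}+O(\lambda^{-1})$, whose second term is positive for $\alpha>1/2$; hence $F$ overshoots $\pi$, the supremum is a genuine interior maximum attained at a finite point $\lambda_*(\alpha)>0$, and $k(\alpha)>\pi$. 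A critical point of $\sqrt\lambda\,G(\lambda)$ satisfies $D(\lambda)=\lambda$ with $D(\lambda):=-G(\lambda)/G'(\lambda)-\lambda$ strictly increasing (by the pairwise-sum computation of Lemma~\ref{L:impl}); uniqueness of the solution — hence of $\lambda_*(\alpha)$ — is confirmed by reliable numerics (Fig.~\ref{fig:comp3}). The sequence $a_k=1/((k-\alpha)^2+\lambda_*(\alpha))$ is the equality case of the underlying Cauchy--Schwarz step and is thus the unique extremal. Finally, $\psi(z)=-\gamma-1/z+O(z^2)$ near $0$ makes the $k=1$ term dominant as $\alpha\to1^-$: $G(\lambda)\approx((1-\alpha)^2+\lambda)^{-1}$ maximizes $\sqrt\lambda\,G(\lambda)$ at $\lambda=(1-\alpha)^2$, giving $\lambda_*(\alpha)\sim(1-\alpha)^2$ and $k(\alpha)\sim1/(1-\alpha)$.

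The main obstacle is exactly the two places where no closed form is available and one must rely on the explicit $\psi$-function representation \eqref{Fal} together with reliable computation: the global negativity of $R(D,\alpha)$ on the bounded part of the range $D\ge(1-\alpha)^2$, uniformly in $\alpha\in[0,1/2)$, and the rigorous uniqueness of the maximizer $\lambda_*(\alpha)$ for $\alpha\in(1/2,1)$ — both in the same ``experimental'' spirit as the substitution used in the proof of Theorem~\ref{T:est}.
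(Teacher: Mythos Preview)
Your proposal is correct and follows essentially the same route as the paper: for $\alpha\in[0,1/2)$ you reduce \eqref{interCL} to \eqref{negative} via \eqref{VDalpha}, use the identical truncated substitution $\lambda_*(D)=D-\tfrac{2(1-2\alpha)}{\pi}D^{1/2}$, handle large $D$ by the same asymptotic expansion, and defer the bounded range to the numerical check of Fig.~\ref{fig:comp2}; for $\alpha\in(1/2,1)$ you rerun Remark~\ref{R:direct} over $\mathbb N$, obtain $k(\alpha)=\sup_{\lambda>0}F(\alpha,\lambda)$, and rely on Fig.~\ref{fig:comp3} for uniqueness of $\lambda_*(\alpha)$. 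You are in fact slightly more explicit than the paper in justifying that the supremum is attained at a finite point (via the Stirling overshoot $F(\alpha,\lambda)=\pi-(1-2\alpha)\lambda^{-1/2}+\cdots$) and in noting that critical points satisfy $D(\lambda)=\lambda$, but you correctly identify the same two places where the argument ultimately rests on reliable computation rather than a closed-form estimate.
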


\setcounter{equation}{0}
\section{Lieb--Thirring estimates for magnetic
Schr\"odinger operators}\label{sec6}

\subsection*{One-dimensional
 Sobolev inequalities for matrices}\label{ss61}

In this section we give an alternative proof of the main result in
\cite{D-L-L} along with its generalization to higher order
derivatives and magnetic operators.

Let $\{\phi_n\}_{n=1}^N$ be an orthonormal family of
vector-functions
$$
\phi_n(x)=(\phi_n(x,1),\dots,\phi_n(x,M))^T
$$
and
$$
(\phi_n,\phi_m)
=
\sum_{j=1}^M\int_D\phi_n(x,j)\overline{\phi_m(x,j)}dx
=\int_D \phi_n(x)^T\overline{\phi_m(x)}dx=\delta_{nm}.
$$
Here $D=\mathbb{R}$ or $D=\mathbb{S}^1$.  In the latter case we
assume that for all $n$ and $j$
$$
\int_0^{2\pi}\phi_n(x,j)dx=0.
$$

We consider the $M\times M$ matrix $U(x,y)$
\begin{equation}\label{matrixU}
U(x,y)=\sum_{n=1}^N\phi_n(x)\overline{\phi_n(y)}^T
\end{equation}
so that
$[U(x,y)]_{jk}=\sum_{n=1}^N\phi_n(x,j)\overline{\phi_n(y,k)}$.
Clearly,
$$
U(x,y)^*=U(y,x)
$$
and by orthonormality
$$
\aligned
\int_D U(x,y)U(y,z)dy=\sum_{n,n'=1}^N\int_D \phi_n(x)
\overline{\phi_n(y)}^T\phi_{n'}(y)\overline{\phi_{n'}(z)}^Tdy=\\=
\sum_{n=1}^N \phi_n(x)
\overline{\phi_{n}(z)}^T=U(x,z).
\endaligned
$$
In addition, $U(x,x)$ is positive semi-definite, since $(U(x,x)a,a)=
\sum_{n=1}^N|a^T\phi_n(x)|^2\ge0$.

\begin{theorem}\label{T:matrix}
Let $m>1/2$. Then
\begin{equation}\label{trace}
\int_D\Tr[U(x,x)^{2m+1}]dx\le C(m)^{2m}\sum_{n=1}^N\sum_{j=1}^M
\int_D|\phi^{(m)}_n(x,j)|^2dx,
\end{equation}
where $C(m)$ is defined in~\eqref{Sob}. In particular, for $m=1,2$
$$
\aligned
\int_D\Tr[U(x,x)^{3}]dx\le\sum_{n=1}^N\sum_{j=1}^M
\int_D|\phi'_n(x,j)|^2dx,\\
\int_D\Tr[U(x,x)^{5}]dx\le \frac 4{27}\sum_{n=1}^N\sum_{j=1}^M
\int_D|\phi''_n(x,j)|^2dx.
\endaligned
$$
\end{theorem}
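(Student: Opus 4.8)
The plan is to reduce the matrix (trace) inequality \eqref{trace} to the scalar interpolation inequality \eqref{Sob} by a diagonalization argument at each fixed point $x$, combined with a clever choice of test vectors that keeps track of the orthonormality of the family $\{\phi_n\}$. Fix $x\in D$ and recall that $U(x,x)$ is a positive semi-definite $M\times M$ matrix. Diagonalize it: $U(x,x)=\sum_{\ell=1}^{M}\mu_\ell(x)\,e_\ell(x)\,e_\ell(x)^*$ with $\mu_\ell(x)\ge0$ and $\{e_\ell(x)\}$ an orthonormal basis of $\mathbb{C}^M$. Then $\Tr[U(x,x)^{2m+1}]=\sum_{\ell}\mu_\ell(x)^{2m+1}$, so the left-hand side of \eqref{trace} becomes $\sum_{\ell}\int_D\mu_\ell(x)^{2m+1}\,dx$. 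The key observation is the reproducing-kernel identity $\int_D U(x,y)U(y,z)\,dy=U(x,z)$ established above, which for $x=z$ gives $\int_D U(x,y)U(y,x)\,dy=U(x,x)$, i.e. $\int_D \|U(x,y)a\|^2\,dy=(U(x,x)a,a)$ for every $a\in\mathbb{C}^M$. Taking $a=e_\ell(x)$ we get the crucial "mass" bound: the vector-function $y\mapsto U(x,y)e_\ell(x)$ has $L_2(D)$-norm squared equal to $\mu_\ell(x)$.

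Next I would apply the scalar inequality \eqref{Sob} componentwise. For fixed $x$ and $\ell$, set $v_\ell(y):=U(x,y)e_\ell(x)\in\mathbb{C}^M$; its value at $y=x$ is $U(x,x)e_\ell(x)=\mu_\ell(x)e_\ell(x)$, so $\|v_\ell(x)\|_{\mathbb{C}^M}=\mu_\ell(x)$. Writing $v_\ell=(v_\ell^{(1)},\dots,v_\ell^{(M)})$ and applying \eqref{Sob} to each scalar component $v_\ell^{(j)}$, then summing an appropriate combination over $j$ (using $\sum_j |v_\ell^{(j)}(x)|^2 = \mu_\ell(x)^2$, $\sum_j\|v_\ell^{(j)}\|^2=\mu_\ell(x)$), one obtains a bound of the form
$$
\mu_\ell(x)^{2} \le C(m)\,\mu_\ell(x)^{\,2\theta\cdot\tfrac12}\Bigl(\sum_j\|(v_\ell^{(j)})^{(m)}\|_{L_2(D)}^2\Bigr)^{(1-\theta)},
$$
which after rearranging gives $\mu_\ell(x)^{2m+1}\le C(m)^{2m}\sum_j\|(v_\ell^{(j)})^{(m)}\|_{L_2(D)}^2$, since $\theta=1-\tfrac1{2m}$ makes the exponents match. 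Here the $m$-th derivative is in the $y$ variable, and $(v_\ell^{(j)})^{(m)}(y)=\partial_y^m\sum_n\phi_n(x,j)\overline{\phi_n(y,\cdot)}^T e_\ell(x)=\sum_n \phi_n(x,j)\overline{\phi_n^{(m)}(y,\cdot)}^T e_\ell(x)$. Integrating in $y$ and using orthonormality of $\{\phi_n\}$ together with $\sum_\ell e_\ell(x)e_\ell(x)^*=I$ collapses the double sum over $n$ and over $\ell$ into exactly $\sum_n\sum_j|\phi_n^{(m)}(x,j)|^2$ (after a further integration in $x$). Summing over $\ell$ and integrating over $x\in D$ then yields \eqref{trace}.

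The main obstacle — and the place requiring genuine care rather than bookkeeping — is the componentwise application of the one-dimensional inequality and the subsequent summation: one must verify that the "mixed" quantities $\sum_j\|v_\ell^{(j)}\|^2$ and $\sum_j|v_\ell^{(j)}(x)|^2$ assemble correctly into norms of $U$, and, more delicately, that after integrating the gradient term $\sum_\ell\int_D\bigl(\sum_j\|(v_\ell^{(j)})^{(m)}\|_{L_2(D)}^2\bigr)dx$ over $x$ one really recovers $\sum_n\sum_j\int_D|\phi_n^{(m)}(x,j)|^2dx$ with no loss. This is where the orthonormality hypothesis and, in the periodic case $D=\mathbb{S}^1$, the zero-average condition (needed so that \eqref{Sob} applies with the same constant $C(m)$, per \cite{I98JLMS}) enter; in particular the zero-average condition on each $\phi_n(\cdot,j)$ guarantees that each $v_\ell^{(j)}$ also has zero average in $y$, so that the periodic version of \eqref{Sob} is legitimately invoked. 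The special cases $m=1$ ($C(1)=1$) and $m=2$ ($C(2)^4=4/27$, read off from \eqref{rem1} or \eqref{Sob} with Taikov's constant) then follow immediately by substituting the explicit values of $C(m)$.
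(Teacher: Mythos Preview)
Your overall strategy is sound and does yield the theorem, but there is one genuine slip that needs correcting before the argument closes. From the reproducing identity $\int_D U(x,y)U(y,x)\,dy=U(x,x)$ and $U(x,y)^*=U(y,x)$ one gets
\[
(U(x,x)a,a)=\int_D (U(x,y)U(y,x)a,a)\,dy=\int_D\|U(y,x)a\|^2\,dy,
\]
not $\int_D\|U(x,y)a\|^2\,dy$ as you wrote. In general these two integrals differ (already for $N=1$, $M=2$). Hence you must take $v_\ell(y):=U(y,x)e_\ell(x)=\sum_n\bigl(\overline{\phi_n(x)}^{\,T}e_\ell(x)\bigr)\phi_n(y)$ rather than $U(x,y)e_\ell(x)$. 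With this swap everything you claim is true: $v_\ell(x)=\mu_\ell(x)e_\ell(x)$, $\|v_\ell\|_{L_2}^2=\mu_\ell(x)$ (now by genuine orthonormality of the $\phi_n$ in the $y$-integral), the componentwise application of \eqref{Sob} plus H\"older in $j$ gives $\mu_\ell(x)^{2m+1}\le C(m)^{2m}\|v_\ell^{(m)}\|^2$, and finally
\[
\sum_\ell\|v_\ell^{(m)}\|^2=\int_D\|\partial_y^{m}U(y,x)\|_{\mathrm{HS}}^2\,dy,
\qquad
\int_D\!\int_D\|\partial_y^{m}U(y,x)\|_{\mathrm{HS}}^2\,dy\,dx
=\sum_{n,j}\int_D|\phi_n^{(m)}|^2,
\]
the last identity coming precisely from $\int_D\overline{\phi_{n'}(x)}^{\,T}\phi_n(x)\,dx=\delta_{nn'}$. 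The zero-average condition is inherited by each $v_\ell^{(j)}$ because it is a finite combination of the $\phi_n(\cdot,j)$.

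This route is correct but \emph{not} the one the paper takes. The paper never invokes the scalar inequality \eqref{Sob}; instead it works directly in Fourier space, writing $\tilde U(k,x)$, applying a matrix Cauchy--Schwarz with the weight $[|k|^{2m}I+\Lambda(x)^{2m}]^{\pm1/2}$, using the matrix bound $\sum_{k\in\mathbb{Z}_0}[|k|^{2m}I+\Lambda^{2m}]^{-1}<2\pi c_0(m)\Lambda^{-(2m-1)}$, setting $\Lambda=\beta U(x,x)$, and optimizing over $\beta$. In effect the paper \emph{reproves} \eqref{Sob} at the matrix level, whereas you reduce to it via diagonalization of $U(x,x)$ and the kernel functions $v_\ell$. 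Your argument is the natural matrix-valued extension of the Eden--Foias method (cf.\ Remark~\ref{R:E-F}) and is arguably more elementary; the paper's Fourier-weight approach is more uniform across the scalar, magnetic and higher-order cases, since one only needs to replace the multiplier $|k|^{2m}$ by $(k+\alpha)^2$, etc., without redoing the reduction.
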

\begin{proof}
We first consider the periodic case.
We write
$$
\tilde U(n,x)  = \int_0^{2\pi}  \frac{ e^{- i y n}}{\sqrt{2\pi}} U(y,x) dy
$$
so that
$$
U(y,x) = \sum_{k\in\mathbb{Z}_0}  \frac{e^{ i y k}}{\sqrt{2\pi}}  \tilde U(k,x)  \ .
$$
We have
\begin{equation}\label{orthint}
\sum_{k\in\mathbb{Z}_0} \tilde U(k,x)^* \tilde U(k,x) =
\int_0^{2\pi}  U(y,x)^* U(y,x) dy  = U(x,x),
\end{equation}
where $\mathbb{Z}_0=\mathbb{Z}\setminus \{0\}$, and
we further have
$$
\sum_{k\in\mathbb{Z}_0} |k|^{2m}
\tilde U(k,x)^* \tilde U(k,x) =
\int_0^{2\pi} [\partial_y^{(m)} U (y,x)]^* \partial_y^{(m)} U(y,x)  dy
$$
so that by orthonormality
\begin{equation}\label{Dtrace}
\aligned
\Tr\left[\int_0^{2\pi} \sum_{k\in \mathbb{Z}} |k|^{2m}
\tilde U(k,x)^* \tilde U(k,x)  dx \right]=\\=
\Tr\left[\int_0^{2\pi} \int_0^{2\pi} \sum_{n,n'=1}^{N}
\phi_{n'}^{(m)}(y)\overline{\phi_{n'}(x)}^T
\phi_n(x)\overline{\phi_n^{(m)}(y)}^T
 dx dy\right]=\\
\Tr\left[ \int_0^{2\pi} \sum_{n=1}^{N}
\phi_{n}^{(m)}(y)\overline{\phi_n^{(m)}(y)}^T dy\right]=
\sum_{n=1}^N\sum_{j=1}^M
\int_0^{2\pi}|\phi^{(m)}_n(x,j)|^2dx.
\endaligned
\end{equation}

Now consider
$$
\aligned
\Tr[U(x,x)^{2m+1}] = \sum_{k\in\mathbb{Z}_0}
\Tr[U(x,x)^{2m} \tilde U(k,x) ]  \frac{e^{ i x k}}{\sqrt{2 \pi}}=\\
\sum_{k\in\mathbb{Z}_0}
\Tr\biggl [ [|k|^{2m} I + \Lambda(x)^{2m}]^{-1/2} U(x,x)^{2m}
\tilde U(k,x)\times\\ [|k|^{2m} I + \Lambda(x)^{2m}]^{1/2}\biggr ]
\frac{e^{ i x k}}{\sqrt{2 \pi}}\,,
\endaligned
$$
where $\Lambda(x)$ is an arbitrary positive definite matrix. Using
 below the  Cauchy--Schwarz inequality for
matrices  we get the upper bounds
$$
\aligned
\Tr[U(x,x)^{2m+1}] \le\\
\frac 1{\sqrt{2 \pi}} \sum_{k\in\mathbb{Z}_0}
\left|\Tr\left [ [|k|^{2m} I + \Lambda(x)^{2m}]^{-1/2} U(x,x)^{2m}\right.\right.\times\\
\left.\left.\tilde U(k,x)[|k|^{2m} I + \Lambda(x)^{2m}]^{1/2}\right ]
\right|\le\\
\frac 1{\sqrt{2 \pi}} \sum_{k\in\mathbb{Z}_0}
\left(\Tr\left [ U(x,x)^{2m}[|k|^{2m} I +\Lambda(x)^{2m}]^{-1}
U(x,x)^{2m}\right]\right)^{1/2}\times\\
\left(\Tr\left[
[|k|^{2m} I + \Lambda(x)^{2m}]
\tilde U(k,x)^*\tilde U(k,x)\right]\right)^{1/2}\le\\
\frac 1{\sqrt{2 \pi}} \left(\sum_{k\in\mathbb{Z}_0}
\Tr\left [ U(x,x)^{2m}[|k|^{2m} I + \Lambda(x)^{2m}]^{-1}
U(x,x)^{2m}\right]\right)^{1/2}\times\\
\left(\sum_{k\in\mathbb{Z}_0}\Tr\left[
[|k|^{2m} I + \Lambda(x)^{2m}]
\tilde U(k,x)^*\tilde U(k,x)\right]\right)^{1/2}\,.
\endaligned
$$
For the first factor we have
$$
\aligned
\sum_{k\in\mathbb{Z}_0}
\Tr\left [ U(x,x)^{2m}[|k|^{2m} I + \Lambda(x)^{2m}]^{-1}
U(x,x)^{2m}\right]=\\
\Tr\left [ U(x,x)^{2m}\sum_{k\in\mathbb{Z}_0}[|k|^{2m} I + \Lambda(x)^{2m}]^{-1}
U(x,x)^{2m}\right]<\\
2\pi c(m)\Tr\left [ U(x,x)^{2m}\Lambda(x)^{-(2m-1)}
U(x,x)^{2m}\right],
\endaligned
$$
where we have used the  matrix inequality
\begin{equation}\label{matrineq}
\sum_{k\in\mathbb{Z}_0}[|k|^{2m} I + \Lambda(x)^{2m}]^{-1}<
2\pi c_0(m)\Lambda(x)^{-(2m-1)},\quad c_0(m)=\frac1{2m\sin\frac{\pi}{2m}}.
\end{equation}
In fact, the action of the matrix on the left-hand side on each
eigenvector $e=e(x)$ of $\Lambda(x)$ with eigenvalue
$\lambda=\lambda(x)>0$ from the orthonormal basis
$\{e_j(x),\ \lambda_j(x)\}_{j=1}^M$  results in multiplication of it by the number
$\sum_{k\in\mathbb{Z}_0}\frac1{|k|^{2m}+\lambda^{2m}}$ for which we
have
$$
\aligned
\sum_{k\in\mathbb{Z}_0}\frac1{|k|^{2m}+\lambda^{2m}}=
\lambda^{-(2m-1)}\frac1\lambda\sum_{k\in\mathbb{Z}_0}\frac1{(|k|/\lambda)^{2m}+1}<\\
\lambda^{-(2m-1)}\,2\int_{0}^\infty\frac{dx}{x^{2m}+1}=
\lambda^{-(2m-1)}2\pi c_0(m),
\endaligned
$$
since the function $1/({x^{2m}+1})$ is monotone decreasing on
$[0,\infty)$.

For the second factor we simply have
$$
\aligned
\sum_{k\in\mathbb{Z}_0}\Tr\left[
[|k|^{2m} I + \Lambda(x)^{2m}]
\tilde U(k,x)^*\tilde U(k,x)\right]=\\
\sum_{k\in\mathbb{Z}_0}\Tr
\left[|k|^{2m} \tilde U(k,x)^*\tilde U(k,x)\right] +
\sum_{k\in\mathbb{Z}_0}\Tr\left[\Lambda(x)^{2m}
\tilde U(k,x)^*\tilde U(k,x)\right].
\endaligned
$$

If we now chose $\Lambda(x)=\beta(U(x,x)+\varepsilon I)$ and let $\varepsilon\to0$
we obtain (observing that $\lambda^{4m}/(\lambda+\varepsilon)^{2m-1}\to
\lambda^{2m+1}$ as $\varepsilon\to0$ for $\lambda\ge0$; this is required in case when $U(x,x)$
is not invertible)
$$
\aligned
\Tr[U(x,x)^{2m+1}] \le
c_0(m)^{1/2}\beta^{-(2m-1)/2}\Tr[U(x,x)^{2m+1}]^{1/2}\times\\
\left(
\sum_{k\in\mathbb{Z}_0}\Tr
\left[|k|^{2m} \tilde U(k,x)^*\tilde U(k,x)\right]
+\beta^{2m}\Tr[U(x,x)^{2m+1}]
\right),
\endaligned
$$
where we have also used~\eqref{orthint}, or
$$
\aligned
&\Tr[U(x,x)^{2m+1}] \le\\
c_0(m)
\biggl(\beta^{-(2m-1)}
\sum_{k\in\mathbb{Z}_0}
&\Tr
\left[|k|^{2m} \tilde U(k,x)^*
\tilde U(k,x)\right]
+\beta\Tr[U(x,x)^{2m+1}]
\biggr).
\endaligned
$$
If we optimize over $\beta$, we obtain
$$
\aligned
\Tr[U(x,x)^{2m+1}] \le c_0(m)\frac{1}{\theta^\theta(1-\theta)^{1-\theta}}\times\\
\biggl(\Tr[U(x,x)^{2m+1}]\biggr)^{\theta}
\biggl(
\sum_{k\in\mathbb{Z}_0}
\Tr
\left[|k|^{2m} \tilde U(k,x)^*
\tilde U(k,x)\right]
\biggr)^{1-\theta}
\endaligned
$$
or
$$
\Tr[U(x,x)^{2m+1}] \le
C(m)^{2m}
\sum_{k\in\mathbb{Z}_0}
\Tr
\left[|k|^{2m} \tilde U(k,x)^*
\tilde U(k,x)\right].
$$
If we integrate with respect to $x$ and use~\eqref{Dtrace},
 we obtain~\eqref{trace}.

In the case of $x\in\mathbb{R}$ the proof is similar. We
use the Fourier transform instead of the Fourier series
and the matrix equality
$$
\int_{-\infty}^\infty[|p|^{2m} I + \Lambda(x)^{2m}]^{-1}dp=
2\pi c_0(m)\Lambda(x)^{-(2m-1)}
$$
instead of~\eqref{matrineq}.
\end{proof}

The one-dimensional periodic magnetic case is treated similarly.
Suppose that as before we have a family of orthonormal
periodic vector-functions (no zero average condition is assumed).
As before we construct the matrix $U$~\eqref{matrixU}.
\begin{theorem}\label{T:matrix_magn}
The following inequality holds
\begin{equation}\label{trace_mag}
\int_0^{2\pi}\Tr[U(x,x)^{3}]dx\le K(\alpha)^2\sum_{n=1}^N
\sum_{j=1}^M  \int_0^{2\pi}|(i \partial_x -a(x)) \phi_n(x,j)|^2 dx,
\end{equation}
where $K(\alpha)$ is defined in~\eqref{Kalpha1}.
\end{theorem}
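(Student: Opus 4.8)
The plan is to repeat, essentially verbatim, the proof of Theorem~\ref{T:matrix} in the case $m=1$, replacing the plain Fourier system $\{e^{ikx}/\sqrt{2\pi}\}_{k\in\mathbb{Z}_0}$ by the complete magnetic orthonormal system $\{\varphi_n\}_{n\in\mathbb{Z}}$ introduced in Section~\ref{sec3}, and the multiplier $|k|^2$ by $(n+\alpha)^2$. First I would expand the kernel $U(y,x)$ in its first variable along $\{\varphi_n\}$, setting $\tilde U(n,x)=\int_0^{2\pi}\overline{\varphi_n(y)}\,U(y,x)\,dy$, so that $U(y,x)=\sum_{n\in\mathbb{Z}}\varphi_n(y)\,\tilde U(n,x)$ and in particular $U(x,x)=\sum_{n\in\mathbb{Z}}\varphi_n(x)\,\tilde U(n,x)$. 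Using the orthonormality of $\{\varphi_n\}$, the semigroup property $\int_0^{2\pi}U(x,y)U(y,x)\,dy=U(x,x)$, and the relation $(i\partial_y-a(y))\varphi_n=-(n+\alpha)\varphi_n$, one obtains the two Parseval-type identities
$$
\sum_{n\in\mathbb{Z}}\tilde U(n,x)^*\tilde U(n,x)=U(x,x),\qquad
\sum_{n\in\mathbb{Z}}(n+\alpha)^2\tilde U(n,x)^*\tilde U(n,x)=\int_0^{2\pi}[(i\partial_y-a(y))U(y,x)]^*[(i\partial_y-a(y))U(y,x)]\,dy .
$$
Taking the trace of the second identity and integrating in $x$ turns its right-hand side into the magnetic Dirichlet form $\sum_{n=1}^N\sum_{j=1}^M\int_0^{2\pi}|(i\partial_x-a(x))\phi_n(x,j)|^2\,dx$, exactly as in~\eqref{Dtrace}.

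Next I would write $\Tr[U(x,x)^3]=\sum_{n\in\mathbb{Z}}\varphi_n(x)\,\Tr[U(x,x)^2\tilde U(n,x)]$, insert the factors $[(n+\alpha)^2 I+\Lambda(x)^2]^{\mp1/2}$ for an arbitrary positive-definite matrix $\Lambda(x)$, and apply the matrix Cauchy--Schwarz inequality twice, precisely as in the proof of Theorem~\ref{T:matrix}. The first resulting factor is controlled by the matrix inequality
$$
\sum_{n\in\mathbb{Z}}\bigl[(n+\alpha)^2 I+\Lambda^2\bigr]^{-1}\le \pi K(\alpha)\,\Lambda^{-1},
$$
which, after simultaneous diagonalization of $\Lambda$, reduces to the scalar bound $\sum_{n\in\mathbb{Z}}\frac1{(n+\alpha)^2+\lambda}=2\pi G(\lambda)\le \pi K(\alpha)\lambda^{-1/2}$, i.e.\ exactly to the identity $2\sup_{\lambda>0}\sqrt{\lambda}\,G(\lambda)=K(\alpha)$ established in~\eqref{Kalpha}. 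The second factor is bounded using the two Parseval identities above.

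Finally I would choose $\Lambda(x)=\beta\,(U(x,x)+\varepsilon I)$ and let $\varepsilon\to0^+$, so that $U(x,x)^2\Lambda^{-1}U(x,x)^2\to\beta^{-1}U(x,x)^3$ and $\Lambda^2U(x,x)\to\beta^2U(x,x)^3$ (this handles the possible non-invertibility of $U(x,x)$), and then optimize over $\beta>0$. Collecting the three numerical contributions — the factor $1/\sqrt{2\pi}$ coming from $|\varphi_n(x)|=1/\sqrt{2\pi}$, the factor $(\pi K(\alpha))^{1/2}$ from the matrix inequality, and the factor $\theta^{-\theta}(1-\theta)^{-(1-\theta)}=2$ at $\theta=1/2$ produced by the optimization in $\beta$ — yields the pointwise estimate $\Tr[U(x,x)^3]\le K(\alpha)^2\sum_{n\in\mathbb{Z}}(n+\alpha)^2\Tr[\tilde U(n,x)^*\tilde U(n,x)]$; integrating in $x$ and using the second Parseval identity gives~\eqref{trace_mag}. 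The only genuinely new ingredient beyond Theorem~\ref{T:matrix} is the displayed matrix inequality, and its (routine) verification via the spectral theorem for $\Lambda(x)$ is the main point to record; everything else, including the harmlessness of non-commutativity — all inserted matrices being functions of the single matrix $\Lambda(x)$ — is a transcription of the earlier argument, the completeness of $\{\varphi_n\}_{n\in\mathbb{Z}}$ needed for the expansion of $U(y,x)$ having already been noted in Section~\ref{sec3}.
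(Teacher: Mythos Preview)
Your proposal is correct and follows essentially the same route as the paper's proof: expand $U(y,x)$ in the magnetic orthonormal system $\{\varphi_n\}_{n\in\mathbb{Z}}$, establish the two Parseval-type identities, apply the matrix Cauchy--Schwarz argument with the free matrix $\Lambda(x)$, invoke the scalar bound $2\sqrt{\lambda}\,G(\lambda)\le K(\alpha)$ via the spectral theorem to get the key matrix inequality, then set $\Lambda(x)=\beta(U(x,x)+\varepsilon I)$, let $\varepsilon\to0$, and optimize in $\beta$. The only difference is presentational---you spell out the use of the basis $\{\varphi_n\}$ and the factor $|\varphi_n(x)|=1/\sqrt{2\pi}$ explicitly, whereas the paper leaves this implicit by simply writing ``we define the matrix Fourier coefficients for all $n\in\mathbb{Z}$''.
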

\begin{proof} We define the matrix Fourier coefficients for all
$n\in\mathbb{Z}$.
We now  have
$$
\sum_{k\in\mathbb{Z}}  \tilde U(k,x)^* \tilde U(k,x) =
\int_0^{2\pi}  U(y,x)^* U(y,x) dy  = U(x,x) \ ,
$$
and
$$
\sum_{k\in\mathbb{Z}} (k+\alpha)^2   \tilde U(k,x)^* \tilde U(k,x)
 = \int_0^{2\pi} [(i \partial_y -a)U (y,x)]^*  (i \partial_y - a) U(y,x)  dy
$$
so that instead of \eqref{Dtrace} we now have
\begin{equation}\label{magnD}
\int_0^{2\pi} \sum_{k\in\mathbb{Z}} (k+\alpha)^2 \tilde U(k,x)^* \tilde U(k,x)  dx =
\sum_{n=1}^N \sum_{j=1}^M  \int_0^{2\pi} |(i \partial_x -a) \phi_n(x,j)|^2 dx.
\end{equation}

As in the proof of  Theorem~\ref{T:matrix}  we have
$$
\aligned
\Tr[U(x,x)^{3}]\le  \qquad\qquad\\
\frac 1{\sqrt{2 \pi}} \left(\sum_{k\in\mathbb{Z}}
\Tr\left [ U(x,x)^{2}[(k+\alpha)^{2} I + \Lambda(x)^{2}]^{-1}
U(x,x)^{2}\right]\right)^{1/2}\times\\
\left(\sum_{k\in\mathbb{Z}}\Tr\left[
[(k+\alpha)^{2} I + \Lambda(x)^{2}]
\tilde U(k,x)^*\tilde U(k,x)\right]\right)^{1/2}\,.
\endaligned
$$
Now as a matrix inequality
$$
\sum_{k\in\mathbb{Z}}
[(k+\alpha)^{2} I + \Lambda(x)^{2}]^{-1}<\pi K(\alpha)\Lambda(x)^{-1},
$$
since the action of the matrix on the left-hand side on an
eigenvector $e=e(x)$ of $\Lambda(x)$ with eigenvalue
$\lambda=\lambda(x)$ is a multiplication of it by the number
$\sum_{k\in\mathbb{Z}}\frac1{(k+\alpha)^{2}+\lambda^{2}}$
and in view of~\eqref{Kalpha}
$$
\sum_{k\in\mathbb{Z}}\frac1{(k+\alpha)^{2}+\lambda^{2}}
<\pi K(\alpha)\frac1\lambda\,.
$$
If we again set $\Lambda(x)=\beta(U(x,x)+\varepsilon I)$
and let $\varepsilon\to0$ we obtain
\begin{multline*}
\Tr[U(x,x)^3] \le \\ \le \frac{K(\alpha) }{2 }  \biggl(\beta^{-1}
\sum_{k\in\mathbb{Z}}  (k+\alpha)^2\Tr[\tilde U(k,x)^*\tilde U(k,x)]+
\beta \Tr[U(x,x)^3]\biggr)\,.
\end{multline*}
If we optimize over $\beta$, we get
\begin{multline*}
\Tr[U(x,x)^3]  \le \\\le K(\alpha)
\biggl(\sum_{k\in\mathbb{Z}} (k+\alpha)^2  \Tr[\tilde U(k,x)^*  \tilde U(k,x)]\biggr)^{1/2}
\biggl( Tr[U(x,x)^3]\biggr)^{1/2}
\end{multline*}
and hence
$$
\Tr[U(x,x)^3]  \le K(\alpha)^2 \sum_{k\in\mathbb{Z}}  (k+\alpha)^2 \Tr[\tilde U(k,x)^*
  \tilde U(k,x)]
$$
from which our inequality follows by integration in  $x$ and
using~\eqref{magnD}.
\end{proof}
\begin{remark}\label{R:E-F}
{\rm
In the scalar case $M=1$ inequality \eqref{trace_mag} becomes
\begin{equation}\label{EF}
\int_0^{2\pi}\biggl(\sum_{n=1}^N|\phi_n(x)|^2\biggr)^3dx\le
K(\alpha)^2\sum_{n=1}^N \int_0^{2\pi}
\left|\left(i\partial_x-a(x)\right) \phi_n(x)\right|^2dx
\end{equation}
and follows from \eqref{magnetic}
by the method  of~\cite{E-F}.
}
\end{remark}

Theorem~\ref{T:matrix_magn} is equivalent to the estimate of
the negative trace
of the magnetic Schr\"odinger  operator
\begin{equation}\label{matr_oper}
H=\left(i\frac d{dx}-a(x)\right)^2-V
\end{equation}
in $L_2(\mathbb{S}^1)$ with matrix-valued potential $V$.
\begin{theorem}\label{Th:matrS1}
Let $V\ge0$ be a $M\times M$ Hermitian matrix such that $\Tr V^{3/2} \in L_1(\mathbb R)$.
Then the spectrum of operator~\eqref{matr_oper} is discrete and the negative eigenvalues $-\lambda_n\le0$
satisfy the estimate
\begin{equation}\label{matrtrace}
\sum_n\lambda_n\le\frac 2{3\sqrt{3}}K(\alpha)
\int_0^{2\pi}  \Tr[V(x)^{3/2}]dx.
\end{equation}
\end{theorem}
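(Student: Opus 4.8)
The plan is to recognize that this estimate is exactly the Lieb--Thirring dual of the Sobolev-type inequality~\eqref{trace_mag} of Theorem~\ref{T:matrix_magn}, and to pass from one to the other by the familiar duality argument together with a single scalar optimization lifted to matrices. Since $\mathbb S^1$ is compact, the operator $\bigl(i\,d/dx-a(x)\bigr)^2$ has compact resolvent, and the hypothesis $\Tr V^{3/2}\in L_1$ makes $V$ a form-bounded perturbation with relative bound zero (by H\"older together with the interpolation embedding $H^1(\mathbb S^1)\hookrightarrow L_\infty$ used throughout the paper); hence $H$ in~\eqref{matr_oper} is self-adjoint, bounded below and has purely discrete spectrum accumulating only at $+\infty$, so its negative eigenvalues $-\lambda_1\le-\lambda_2\le\cdots$ form a finite or vanishing sequence and
\[
\sum_n\lambda_n=\Tr(H_-)=\sup\Bigl\{\sum_{n=1}^N\bigl[(V\phi_n,\phi_n)-\|(i\partial_x-a)\phi_n\|^2\bigr]\colon N\in\mathbb N,\ \{\phi_n\}_{n=1}^N\ \text{orthonormal}\Bigr\}.
\]

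Next I would insert Theorem~\ref{T:matrix_magn}. Writing $U(x,x)=\sum_{n=1}^N\phi_n(x)\overline{\phi_n(x)}^T\ge0$ as in~\eqref{matrixU}, one has the exact identity $\sum_n(V\phi_n,\phi_n)=\int_0^{2\pi}\Tr[V(x)U(x,x)]\,dx$, while~\eqref{trace_mag} gives $\sum_n\|(i\partial_x-a)\phi_n\|^2\ge K(\alpha)^{-2}\int_0^{2\pi}\Tr[U(x,x)^3]\,dx$. Hence for every admissible orthonormal system
\[
\sum_n\lambda_n\le\int_0^{2\pi}\Bigl(\Tr[V(x)U(x,x)]-K(\alpha)^{-2}\,\Tr[U(x,x)^3]\Bigr)\,dx ,
\]
and it remains to bound the integrand pointwise, uniformly over the positive semidefinite matrix $W=U(x,x)$, by $\tfrac{2}{3\sqrt3}K(\alpha)\,\Tr[V(x)^{3/2}]$.

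The one new ingredient is this pointwise matrix estimate. For positive semidefinite Hermitian $V,W$ I would combine the Schatten--H\"older inequality $\Tr[VW]\le\|V\|_{3/2}\|W\|_3$ (equivalently the von Neumann trace inequality followed by the scalar H\"older inequality with exponents $3/2,3$) with the identity $\Tr[W^3]=\|W\|_3^3$ to reduce everything to the scalar problem $\max_{t\ge0}\bigl(\|V\|_{3/2}\,t-\kappa t^3\bigr)$ with $\kappa=K(\alpha)^{-2}$; this maximum is attained at $t=\bigl(\|V\|_{3/2}/3\kappa\bigr)^{1/2}$ and equals $\tfrac{2}{3\sqrt{3\kappa}}\|V\|_{3/2}^{3/2}=\tfrac{2}{3\sqrt{3\kappa}}\Tr[V^{3/2}]$. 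Since $\sqrt{3\kappa}=\sqrt3/K(\alpha)$, this is exactly $\tfrac{2}{3\sqrt3}K(\alpha)\Tr[V^{3/2}]$; applying it with $W=U(x,x)$ and integrating in $x$ yields~\eqref{matrtrace}. The scalar case $M=1$ is immediate, since there $\Tr[VW]=VW$ and the matrix step is vacuous.

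I do not expect a serious obstacle once Theorem~\ref{T:matrix_magn} is in hand: the work is the duality bookkeeping. The two points needing genuine care are (i) justifying the variational identity for $\Tr(H_-)$ --- that the negative spectrum is discrete and that the supremum over finite orthonormal systems really reproduces the negative trace, which rests on compactness of the resolvent on $\mathbb S^1$ and on the relative form boundedness of $V$ --- and (ii) checking that the non-commutativity of $V$ and $U(x,x)$ entails no loss in the matrix optimization, which is precisely what $\Tr[VW]\le\|V\|_{3/2}\|W\|_3$ provides.
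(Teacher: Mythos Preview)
Your proposal is correct and follows essentially the same duality argument as the paper: both pass from Theorem~\ref{T:matrix_magn} to the Lieb--Thirring bound via the identity $\sum_n(V\phi_n,\phi_n)=\int\Tr[VU]\,dx$, the trace H\"older inequality, and a cubic scalar optimization. The only cosmetic difference is that the paper applies H\"older in the integrated form $\int\Tr[VU]\,dx\le\bigl(\int\Tr V^{3/2}\bigr)^{2/3}\bigl(\int\Tr U^3\bigr)^{1/3}$ and then maximizes once over the single scalar $X=\int\Tr U^3$, whereas you apply H\"older pointwise and optimize at each $x$; both routes produce the identical constant $\tfrac{2}{3\sqrt3}K(\alpha)$.
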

\begin{proof} (See~\cite{D-L-L}.) Let $\{\phi_n\}_{n=1}^N$
be the orthonormal eigen-vector functions corresponding
to $\{-\lambda_n\}_{n=1}^N$:
$$
\left(i\frac d{dx}-a(x)\right)^2\phi_n-V\phi_n=-\lambda_n\phi_n.
$$
Then, using~\eqref{trace_mag} and H\"older's inequality for
 traces
 $$\Tr[AB]\le(\Tr([(A^*A)^{p/2}])^{1/p}(\Tr([(B^*B)^{p'/2}])^{1/p'}$$
 and setting below $X:=\int_0^{2\pi}\Tr [U(x,x)^3]dx$
we obtain
$$
\aligned
&\sum_{n=1}^N\lambda_n=\\
&-\sum_{n=1}^N \sum_{j=1}^M  \int_0^{2\pi} |(i \partial_x -a(x)) \phi_n(x,j)|^2 dx
+\int_0^{2\pi}\Tr [V(x)U(x,x)]dx\le\\
&\le
\left(\int_0^{2\pi}\Tr[V(x)^{3/2}]dx\right)^{2/3}
X^{1/3}-K(\alpha)^{-2}X.
\endaligned
$$
Calculating the maximum with respect to $X$ we
obtain~\eqref{matrtrace}.
\end{proof}

Let %
\begin{equation}\label{class}
L_{\gamma,d}^{\mathrm{cl}} = \frac{1}{(2\pi)^d}\, \int_{\mathbb R^d}  (1-|\xi|^2)_+^\gamma\, d\xi=
\frac{\Gamma(\gamma+1)}{2^d\pi^{d/2}\Gamma(\gamma+d/2+1)}.
\end{equation}
By using the Aizenmann-Lieb argument \cite{AisLieb} we 
immediately obtain the following statement for the Riesz means 
of the eigenvalues for magnetic Schr\"odinger operators with matrix-valued potentials.
\begin{corollary}\label{1D gamma-moments}
Let $V\ge0$ be a  $M\times M$ Hermitian matrix, such that 
$\Tr V^{\gamma + 1/2} \in L_1(0,2\pi)$.
Then for any $\gamma \ge 1$  the negative eigenvalues of 
the operator \eqref{matr_oper} satisfy the inequalities
\begin{equation*}
\sum \lambda_n^\gamma \le L_{\gamma,1} \int_0^{2\pi} \Tr[V(x)^{1/2 + \gamma}] \, dx,
\end{equation*}
where
\begin{equation*}
L_{\gamma,1} \le \frac 2{3\sqrt{3}}K(\alpha) \frac{L_{\gamma,1}^{\mathrm{cl}}}{L_{1,1}^{\mathrm{cl}}}=
\frac\pi{\sqrt{3}}K(\alpha)L_{\gamma,1}^{\mathrm{cl}}.
\end{equation*}
\end{corollary}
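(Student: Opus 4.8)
The plan is to derive the $\gamma$-moment bound from the $\gamma=1$ estimate of Theorem~\ref{Th:matrS1} by the Aizenman--Lieb lifting in the spectral power. Since for $\gamma=1$ the asserted inequality is exactly Theorem~\ref{Th:matrS1} (note $L^{\mathrm{cl}}_{1,1}/L^{\mathrm{cl}}_{1,1}=1$), I may assume $\gamma>1$. The starting point is the elementary Beta-integral identity
$$
\lambda^\gamma=\frac1{B(2,\gamma-1)}\int_0^\infty(\lambda-t)_+\,t^{\gamma-2}\,dt,\qquad\lambda\ge0,
$$
obtained from the substitution $t=\lambda s$ and $B(2,\gamma-1)=1/(\gamma(\gamma-1))$. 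Applying this to each $\lambda_n$ and summing gives
$$
\sum_n\lambda_n^\gamma=\frac1{B(2,\gamma-1)}\int_0^\infty\Bigl(\sum_n(\lambda_n-t)_+\Bigr)t^{\gamma-2}\,dt.
$$

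Next I would estimate the inner sum for each fixed $t>0$. The shifted operator $H+tI=(i\,d/dx-a)^2-(V-tI)$ has as its negative eigenvalues precisely the numbers $-(\lambda_n-t)$ with $\lambda_n>t$; replacing the Hermitian matrix potential $V-tI$ by its positive part $(V(x)-tI)_+\ge V(x)-tI$ only pushes the eigenvalues down, so by the variational principle and Theorem~\ref{Th:matrS1} applied to $(V(x)-tI)_+\ge0$,
$$
\sum_n(\lambda_n-t)_+\le\frac2{3\sqrt3}K(\alpha)\int_0^{2\pi}\Tr\bigl[\bigl((V(x)-tI)_+\bigr)^{3/2}\bigr]\,dx.
$$
Substituting this into the $t$-integral, using Tonelli's theorem to interchange the $x$- and $t$-integrations (the integrand is nonnegative), and diagonalising $V(x)$ pointwise with eigenvalues $v_j(x)\ge0$, the $t$-integral decouples into scalar pieces $\int_0^\infty(v_j-t)_+^{3/2}t^{\gamma-2}\,dt=B(\gamma-1,5/2)\,v_j^{\gamma+1/2}$, whence $\int_0^\infty\Tr[((V(x)-tI)_+)^{3/2}]\,t^{\gamma-2}\,dt=B(\gamma-1,5/2)\,\Tr[V(x)^{\gamma+1/2}]$.

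Collecting constants then gives $L_{\gamma,1}\le\frac2{3\sqrt3}K(\alpha)\,B(\gamma-1,5/2)/B(2,\gamma-1)$, and the last step is the Gamma-function bookkeeping: $B(\gamma-1,5/2)/B(2,\gamma-1)=\Gamma(5/2)\Gamma(\gamma+1)/\Gamma(\gamma+3/2)=L^{\mathrm{cl}}_{\gamma,1}/L^{\mathrm{cl}}_{1,1}$ by the definition~\eqref{class}, together with $L^{\mathrm{cl}}_{1,1}=2/(3\pi)$, which converts $\frac2{3\sqrt3}K(\alpha)/L^{\mathrm{cl}}_{1,1}$ into $\frac\pi{\sqrt3}K(\alpha)$ and yields both forms of the bound. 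The only genuinely delicate point will be the matrix bookkeeping around the shifted operator: one must justify the variational comparison with $(V-tI)_+$ for operator-valued potentials and check that $t\mapsto\int_0^{2\pi}\Tr[((V-tI)_+)^{3/2}]\,dx$ is finite for a.e.\ $t$ and integrable against $t^{\gamma-2}$ near $t=0$ --- this is exactly where the hypotheses $\gamma>1$ and $\Tr V^{\gamma+1/2}\in L_1(0,2\pi)$ enter, through the explicit Beta integral above. Everything else is the routine Aizenman--Lieb computation.
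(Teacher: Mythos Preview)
Your proposal is correct and follows the same Aizenman--Lieb lifting as the paper. The only cosmetic difference is that the paper first rewrites Theorem~\ref{Th:matrS1} in the phase-space form $\sum_n\lambda_n\le\frac{2}{3\sqrt3}K(\alpha)(L_{1,1}^{\mathrm{cl}})^{-1}\int\!\!\int\Tr[(|\xi|^2-V(x))_-]\,\frac{d\xi\,dx}{2\pi}$ and then applies the Beta-integral identity directly to this expression, whereas you apply Theorem~\ref{Th:matrS1} to the shifted potential $(V-tI)_+$ and do the Beta bookkeeping afterwards; the two computations are equivalent (the phase-space integrand $(|\xi|^2-V+t)_-$ automatically ignores the negative part of $V-tI$, which is exactly your variational replacement by $(V-tI)_+$), and your version is arguably more explicit about the comparison step.
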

\begin{proof}It is enough to prove this result for smooth matrix-valued potentials.
Note that Theorem \ref{Th:matrS1} is equivalent to
$$
\sum_n\lambda_n\le\frac 2{3\sqrt{3}}K(\alpha) 
(L_{1,1}^{\mathrm{cl}})^{-1}  \,\int_0^{2\pi}
\int_{-\infty}^\infty \Tr\left[\left(|\xi|^2 - V(x)\right)_-\right]\, \frac{d\xi dx}{2\pi}.
$$
Scaling gives the simple identity for all $s\in \mathbb R$
$$
s_-^\gamma = C_\gamma \, \int_0^\infty t^{\gamma-2} (s+t)_- dt ,
 \qquad C_\gamma^{-1} = \mathcal B(\gamma - 1, 2),
$$
where $\mathcal B$ is the Beta function. Let
$\{\mu_j(x)\}_{j=1}^M $ be eigenvalues of the matrix-function $V(x)$.
Then
 \begin{align*}
& \sum_n \lambda_n^\gamma = C_\gamma \, 
\sum_n \int_0^\infty t^{\gamma-2} (-\lambda_n + t)_- dt\\
& \le \frac{2K(\alpha)}{3\sqrt{3}} \, \frac{C_\gamma}{L_{1,1}^{\mathrm{cl}}} \, 
\int_0^\infty \int_0^{2\pi} \int_{-\infty}^\infty  t^{\gamma-2}   
\Tr\left[\left(|\xi|^2 - V(x) + t\right)_-\right]\,\frac{d\xi dx}{2\pi} \,  dt \\
& = \frac{2K(\alpha)}{3\sqrt{3}}\, \frac{C_\gamma}{L_{1,1}^{\mathrm{cl}}} \, 
\sum_{j=1}^M\, \int_0^\infty \int_0^{2\pi} \int_{-\infty}^\infty  t^{\gamma-2} 
 \Tr\left[\left(|\xi|^2 - \mu_j + t\right)_-\right]\,\frac{d\xi dx}{2\pi} \,  dt\\
&= \frac{2K(\alpha)}{3\sqrt{3}}(L_{1,1}^{\mathrm{cl}})^{-1} \, \int_0^{2\pi} 
\int_{-\infty}^\infty   \Tr \left[\left(|\xi|^2 - V(x)\right)_-^\gamma\right]\,
\frac{d\xi dx}{2\pi} \\
& = \frac 2{3\sqrt{3}}K(\alpha)  \frac{L_{\gamma,1}^{\mathrm{cl}}}{L_{1,1}^{\mathrm{cl}}}\, 
\int_0^{2\pi}  \Tr[V(x)^{1/2 + \gamma}] \, dx.
\end{align*}
\end{proof}

\subsection*{Magnetic Schr\"odinger operator in $\mathbb T^n\times \mathbb R^m$}\label{ss62}
Let us consider the eigenvalue problem for the
 Schr\"odinger
operator $\mathcal{H}$ in $L_2(\mathbb T^{d_1}\times \mathbb R^{d_2})$:
\begin{multline}\label{cyl}
\mathcal{H}\Psi= -\Delta_y\Psi+
\left(i\,\nabla_x - A(x)\right)^2 \Psi-V(x,y)\Psi=-\lambda\Psi, \\(x,y)\in 
\mathbb T^{d_1}\times \mathbb R^{d_2} ,
\end{multline}
where $\mathbb T^{d_1} = \underbrace{\mathbb S^1\times \mathbb S^1}_{d_1-times}$ is 
the standard torus of dimension $d_1$ and
$$A(x) = \left(a_1(x_1),\dots a_{d_1} (x_{d_1})\right)$$
is the magnetic vector potential in the ``diagonal'' case $a_j(x)=a_j(x_j)$.
Assume that
$$
\alpha_j = \frac{1}{2\pi}\, \int_0^{2\pi} a_j(x_j) \, dx_j \not\in\mathbb Z, \qquad 1\le j \le d_1.
$$
Then we have

\begin{theorem}\label{Th gamma} Suppose that the potential
$V(x,y)\ge0$ in \eqref{cyl} and $V\in L_{\gamma + (d_1+d_2)/2}(\mathbb T^{d_1} 
\times \mathbb R^{d_2})$.
If $\gamma \ge1/2$, then the following bound holds
for the negative eigenvalues:
\begin{equation}\label{gamma-moments}
\sum_{n}\lambda_n^{\gamma}
\le L_{\gamma, d_1+d_2} \, \int_{\mathbb T^{d_1}
\times \mathbb R^{d_2}} V^{\gamma + (d_1+d_2)/2} (x,y)\, dx dy.
\end{equation}
Here, if $d_1, d_2\ge 1$ and $1/2\le \gamma <1$, then
$$
\aligned
L_{\gamma, d_1+d_2}\le R_{\gamma, d_1+d_2} :=  2\, 
\left(\frac{2}{3\sqrt3}\right)^{d_1} 
\frac{L_{\gamma,d_1+d_2}^{\mathrm{cl}}}{(L_{1,1}^{\mathrm{cl}})^{d_1}} 
\prod_{j=1}^{d_1} K(\alpha_j)=\\=
2\, \left(\frac{\pi}{\sqrt3}\right)^{d_1} 
L_{\gamma,d_1+d_2}^{\mathrm{cl}} \prod_{j=1}^{d_1} K(\alpha_j),
\endaligned
$$
 and, if $d_1\ge1$, $d_2\ge 0$ and $\gamma\ge1$, then
$$
L_{\gamma, d_1+d_2}\le \frac12  \, R_{\gamma, d_1+d_2}.
$$
\end{theorem}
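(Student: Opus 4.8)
The plan is to establish \eqref{gamma-moments} by the Laptev--Weidl dimensional-reduction (``lifting'') argument \cite{Lap-Weid}, removing the $d:=d_1+d_2$ coordinates of $\mathbb T^{d_1}\times\mathbb R^{d_2}$ one at a time: for each of the $d_1$ torus variables $x_j$ one applies the operator-valued form of Corollary~\ref{1D gamma-moments}, and for each of the $d_2$ Euclidean variables the sharp operator-valued one-dimensional Lieb--Thirring inequality on $\mathbb R$ \cite{Lap-Weid}; the moment is then raised to all admissible $\gamma$ via the Aizenman--Lieb argument \cite{AisLieb}.

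First I would reduce to smooth, compactly supported $V\ge0$, so that every trace below is finite, and record that Corollary~\ref{1D gamma-moments} (equivalently Theorem~\ref{T:matrix_magn}) remains valid when the potential takes values in the bounded self-adjoint operators on an arbitrary separable Hilbert space $\mathfrak H$: the proof uses only the matrix Cauchy--Schwarz inequality and the bound $\sum_{k\in\mathbb Z}[(k+\alpha)^2I+\Lambda^2]^{-1}<\pi K(\alpha)\Lambda^{-1}$, both of which hold verbatim for every positive self-adjoint $\Lambda$ on $\mathfrak H$ (alternatively one truncates $\mathfrak H$ to finite dimension and passes to the limit). I would likewise recall the two standard one-dimensional inputs on $\mathbb R$: the \emph{sharp} operator-valued bound $\Tr\bigl(-\partial_t^2\otimes I-W(t)\bigr)_-^\mu\le L_{\mu,1}^{\mathrm{cl}}\int_{\mathbb R}\Tr W(t)^{\mu+1/2}\,dt$ valid for $\mu\ge3/2$ \cite{Lap-Weid}, and the operator-valued bound at $\mu=1/2$ with constant $2L_{1/2,1}^{\mathrm{cl}}$ \cite{H-L-W},\cite{D-L-L}.

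The core of the iteration is the ``peeling'' step: writing $\mathcal H=(i\partial_x-a(x))^2\otimes I+\mathfrak h(x)$ with $\mathfrak h(x)=\mathcal H_0-V(x,\cdot)$ and $\mathcal H_0\ge0$ the free operator in the remaining $d-1$ variables, one has $\mathfrak h(x)\ge-(V(x,\cdot)-\mathcal H_0)_+$, so the relevant one-dimensional inequality, applied with the operator-valued potential $(V(x,\cdot)-\mathcal H_0)_+$, gives $\Tr\mathcal H_-^{\mu}\le C\int\Tr\bigl((\mathcal H_0-V(x,\cdot))_-^{\,\mu+1/2}\bigr)\,dx$, and the integrand is again a Lieb--Thirring trace for a magnetic Schr\"odinger operator in one fewer coordinate, now at moment $\mu+1/2$; one recurses. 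For $\gamma\ge1$ I would peel the $d_1$ torus coordinates first --- the running moment stays $\ge\gamma\ge1$, so Corollary~\ref{1D gamma-moments} is legitimate at each step and contributes $\tfrac{\pi}{\sqrt3}K(\alpha_j)L_{\mu_j,1}^{\mathrm{cl}}$ --- and then the $d_2$ Euclidean coordinates, where the running moment is $\ge\gamma+d_1/2\ge3/2$ (this is where $d_1\ge1$ enters), so the sharp input applies with constant $L_{\mu,1}^{\mathrm{cl}}$. Collecting the factors and using the telescoping identity $\prod_{l=0}^{d-1}L_{\gamma+l/2,1}^{\mathrm{cl}}=L_{\gamma,d}^{\mathrm{cl}}$ --- the iterated form of $L_{\gamma,d}^{\mathrm{cl}}=L_{\gamma,1}^{\mathrm{cl}}\,L_{\gamma+1/2,d-1}^{\mathrm{cl}}$, a direct computation from \eqref{class} --- one obtains \eqref{gamma-moments} with constant $(\pi/\sqrt3)^{d_1}\prod_jK(\alpha_j)L_{\gamma,d}^{\mathrm{cl}}=\tfrac12R_{\gamma,d}$. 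In the range $1/2\le\gamma<1$ (where $d_2\ge1$ is assumed) I would instead first peel one Euclidean coordinate at moment $\gamma=1/2$, using the $2L_{1/2,1}^{\mathrm{cl}}$ input --- the sole source of the overall factor $2$ --- after which the running moment equals $1$ and the scheme above (the $d_1$ torus coordinates, then the remaining $d_2-1$ Euclidean ones at moment $\ge3/2$) applies and yields \eqref{gamma-moments} at $\gamma_0=1/2$ with constant $2(\pi/\sqrt3)^{d_1}\prod_jK(\alpha_j)L_{1/2,d}^{\mathrm{cl}}=R_{1/2,d}$.

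Finally I would invoke the Aizenman--Lieb identity $\mathcal H_-^{\gamma}=c_{\gamma,\gamma_0}\int_0^\infty t^{\gamma-\gamma_0-1}(\mathcal H+t)_-^{\gamma_0}\,dt$ for $\gamma>\gamma_0$: combined with the same scaling identity for $L_{\cdot,d}^{\mathrm{cl}}$ it promotes the base case at $\gamma_0$ to every $\gamma\ge\gamma_0$, with the constant multiplied by $L_{\gamma,d}^{\mathrm{cl}}/L_{\gamma_0,d}^{\mathrm{cl}}$ --- exactly as in the proof of Corollary~\ref{1D gamma-moments}. Starting from $\gamma_0=1/2$ this gives $L_{\gamma,d}\le R_{\gamma,d}$ for $1/2\le\gamma<1$, while the direct iteration just described (or, equivalently, $\gamma_0=1$ together with Aizenman--Lieb) gives $L_{\gamma,d}\le\tfrac12R_{\gamma,d}$ for $\gamma\ge1$; this is the assertion of the theorem. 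I expect the main difficulty to be not any individual estimate but the bookkeeping of the running moment along the iteration --- one must check at every step that it lies in the admissibility range of the one-dimensional inequality being invoked, which is exactly what forces $d_1\ge1$, forces $d_2\ge1$ in the regime $1/2\le\gamma<1$, and confines the extra factor $2$ to a single Euclidean peeling --- together with making rigorous the operator-valued extensions of Corollary~\ref{1D gamma-moments} and of the $\mathbb R$-inputs on infinite-dimensional fibers.
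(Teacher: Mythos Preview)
Your proposal is correct and follows essentially the same route as the paper: reduction to smooth compactly supported $V$, then the Laptev--Weidl lifting argument peeling one Euclidean variable first via the Hundertmark--Laptev--Weidl $1/2$-moment bound (the source of the factor~$2$), then the $d_1$ torus variables via Corollary~\ref{1D gamma-moments}, then the remaining $d_2-1$ Euclidean variables via the sharp semiclassical bound at moments $\ge3/2$, with the telescoping identity $\prod_{l}L_{\gamma+l/2,1}^{\mathrm{cl}}=L_{\gamma,d}^{\mathrm{cl}}$; for $\gamma\ge1$ the first Euclidean peel is omitted. Your treatment is in fact a bit more explicit than the paper's on two points --- the operator-valued extension of Corollary~\ref{1D gamma-moments} to infinite-dimensional fibers, and the use of Aizenman--Lieb to pass from $\gamma_0=1/2$ to $1/2\le\gamma<1$ --- both of which the paper leaves implicit.
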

\begin{proof}As in Corollary \ref{1D gamma-moments} it is enough to prove 
this result for smooth compactly supported potentials.
We shall use the so-called ``lifting argument with respect 
to dimensions", see \cite{Lap-Weid}.

Let $x=(x_1,x')$ and $y=(y_1,y')$, where $x' \in 
\mathbb R^{d_1-1}$ and $y' \in \mathbb R^{d_2-1}$ and let
$A(x)=(a_1(x_1), A'(x'))$ . Denote
$$
-\Delta' = -(\nabla_{y'})^2, \quad -\Delta_{A} = 
\left(i\,\nabla_x - A(x)\right)^2 , \quad  -\Delta_{A'} = 
 \left(i\nabla'_{x'} - A'(x')\right)^2.
$$
By applying the result in \cite{H-L-W} on the 1/2-moments we have
\begin{align*}
&\sum_n \lambda_n^{1/2}(\mathcal H)
= \sum_n \lambda_n^{1/2} (-\partial_{y_1}^2 -\Delta_A - \Delta'  - V)  \\
&\le  \sum_n \lambda_n^{1/2} \left(-\partial_{y_1}^2 - 
\left( -\Delta_A - \Delta'  - V\right)_-\right)\\
&\le 2L_{1/2,1}^{\mathrm{cl}}\, \int_{\mathbb R}\Tr \left[ -\Delta_A - \Delta'  - V\right]_- dy_1\\
&= 2L_{1/2,1}^{\mathrm{cl}} \sum_n \int_{\mathbb R} 
\lambda_n\left( \left(i\partial_{x_1} - a_1(x_1)\right)^2
-\Delta_{A'} - \Delta' -V(x,y_1,y')\right) dy_1\\
&\le
2L_{1/2,1}^{\mathrm{cl}} \sum_n \int_{\mathbb R}
\lambda_n\Big( \left(i\partial_{x_1} - a_1(x_1)\right)^2
- \left(-\Delta_{A'}- \Delta' -V(x,y_1,y')\right)_-\Big) dy_1.
\end{align*}
Then Theorem~\ref{Th:matrS1} implies
\begin{multline*}
\sum_n \lambda_n^{1/2} (\mathcal H) \le
2L_{1/2,1}^{\mathrm{cl}}L_{1,1}^{\mathrm{cl}}
\frac{2}{3\sqrt3\, L_{1,1}^{\mathrm{cl}}}\, {K(\alpha_1)} \\
\times\int_{\mathbb R} \int_0^{2\pi}
\Tr\left[ -\Delta_{A'} - \Delta' -V(x,y_1,y') \right]_-^{3/2} dx_1 dy_1.
\end{multline*}
Now we first repeat this argument $d_1-1$ times ``splitting"
the operator $\left(i\nabla' - A'(x)\right)^2$ and using
Corollary~\ref{1D gamma-moments}. Then repeat it again $d_2-1$ times
``splitting"  the operator $\Delta'$ and using the semiclassical estimates
for the $\gamma$-Riesz means with $\gamma\ge 3/2$ for the
negative eigenvalues  of the Schr\"odinger operators with
matrix-valued potentials \cite{Lap-Weid}. Finally we obtain
\begin{multline*}
\sum_n \lambda_n^{1/2} (\mathcal H)
\le 2 \left(\prod_{l=0}^{d_1+d_2-1} L^{\mathrm{cl}}_{\gamma + l/2,1}\right) 
\left(\frac{2}{3\sqrt3 \ L_{1,1}^{\mathrm{cl}}}\right)^{d_1} \prod_{j=1}^{d_1} K(\alpha_j)\\
\times \int_{\mathbb T^{d_1}\times \mathbb R^{d_2}} V^{\gamma + (d_1+d_2)/2} (x,y)\, dx dy.
\end{multline*}
In order to prove \eqref{gamma-moments}  for the case $d_1,d_2\ge1$
and $1/2\le \gamma<1$, it remains to notice that (see~\eqref{class})
$$
\prod_{l=0}^{d_1+d_2-1} L^{\mathrm{cl}}_{\gamma + l/2,1} = L_{\gamma, d_1+d_2}^{\mathrm{cl}}.
$$
For the proof of the case $d_1\ge 1$, $d_2\ge0$ and $\gamma\ge1$
we argue similarly, but we omit the first step in the previous
argument starting directly with 1-moments.
\end{proof}

For the special cases $d_1=d_2=1$ and $d_1=2$, $d_2=0$
we state the following corollary of Theorem \ref{Th gamma}:

\begin{corollary}\label{Th:1/2-mom} Suppose that the potential
$V(x,y)\ge0$ in \eqref{cyl}. Then
 for $d_1=d_2=1$  the following bounds hold
for the $1/2$- and $1$-moments of the negative eigenvalues:
$$
\aligned
\sum_{k}\lambda_k^{1/2}&\le
\frac1{3\sqrt{3}}K(\alpha)\int_{\mathbb{R}\times\mathbb{S}^1}
V^{3/2}(x,y)dydx,\\
\sum_{k}\lambda_k&\le
\frac1{8\sqrt{3}}K(\alpha)\int_{\mathbb{R}\times\mathbb{S}^1}
V^{2}(x,y)dydx.
\endaligned
$$
For $d_1=2$, $d_2=0$ we have
$$
\sum_{k}\lambda_k\le
\frac{\pi}{24}\,K(\alpha_1)K(\alpha_2)\int_{\mathbb{T}^2}
V^{2}(x_1,x_2)dx_1dx_2.
$$
\end{corollary}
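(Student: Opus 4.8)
The plan is to derive all three estimates as immediate specializations of Theorem~\ref{Th gamma}, so the only work is evaluating the semiclassical constants $L_{\gamma,d}^{\mathrm{cl}}$ from~\eqref{class} at the relevant values of $\gamma$ and $d=d_1+d_2$, and keeping track of which branch of Theorem~\ref{Th gamma} applies. Recall that in all three cases $d_1+d_2=2$, and that the bound of Theorem~\ref{Th gamma} is phrased through
$R_{\gamma,d_1+d_2}=2\,(\pi/\sqrt3)^{d_1}\,L_{\gamma,d_1+d_2}^{\mathrm{cl}}\prod_{j=1}^{d_1}K(\alpha_j)$, valid as stated for $L_{\gamma,d_1+d_2}$ when $1/2\le\gamma<1$ (with $d_1,d_2\ge1$), and with an extra factor $1/2$ when $\gamma\ge1$ (with $d_1\ge1$, $d_2\ge0$).

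First, for the $1/2$-moment in the case $d_1=d_2=1$ I would take $\gamma=1/2$, so that $d_1+d_2=2$ and $1/2\le\gamma<1$; Theorem~\ref{Th gamma} then gives $L_{1/2,2}\le R_{1/2,2}=2\,(\pi/\sqrt3)\,L_{1/2,2}^{\mathrm{cl}}\,K(\alpha)$. Using $\Gamma(3/2)=\tfrac12\sqrt\pi$ and $\Gamma(5/2)=\tfrac34\sqrt\pi$ one computes $L_{1/2,2}^{\mathrm{cl}}=\Gamma(3/2)/(4\pi\,\Gamma(5/2))=1/(6\pi)$, hence $R_{1/2,2}=\tfrac1{3\sqrt3}K(\alpha)$, which is exactly the asserted constant.

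Next, for the $1$-moment with $d_1=d_2=1$ I would take $\gamma=1$; again $d_1+d_2=2$, but now $\gamma\ge1$, so Theorem~\ref{Th gamma} gives $L_{1,2}\le\tfrac12 R_{1,2}$ with $R_{1,2}=2\,(\pi/\sqrt3)\,L_{1,2}^{\mathrm{cl}}\,K(\alpha)$. Since $\Gamma(2)=1$ and $\Gamma(3)=2$, one has $L_{1,2}^{\mathrm{cl}}=1/(8\pi)$, hence $R_{1,2}=\tfrac1{4\sqrt3}K(\alpha)$ and $\tfrac12 R_{1,2}=\tfrac1{8\sqrt3}K(\alpha)$, as claimed. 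Finally, for $d_1=2$, $d_2=0$ I would again take $\gamma=1$ (so $\gamma\ge1$ and $d_1+d_2=2$): now $R_{1,2}=2\,(\pi/\sqrt3)^2\,L_{1,2}^{\mathrm{cl}}\,K(\alpha_1)K(\alpha_2)=\tfrac{\pi}{12}K(\alpha_1)K(\alpha_2)$, and the factor $\tfrac12$ from the $\gamma\ge1$ branch yields the constant $\tfrac{\pi}{24}K(\alpha_1)K(\alpha_2)$. The summability hypotheses reduce in each case to $V\in L_{\gamma+(d_1+d_2)/2}$ with $\gamma+(d_1+d_2)/2$ equal to $3/2$ or $2$, matching the exponents on $V$ in the statement.

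There is essentially no obstacle here beyond bookkeeping: the whole content lives in Theorem~\ref{Th gamma} (and, through it, in Theorem~\ref{Th:matrS1}, Corollary~\ref{1D gamma-moments} and the lifting argument), and Corollary~\ref{Th:1/2-mom} merely records its constants in three concrete low-dimensional instances. The only point requiring a moment of care is to confirm that $d=d_1+d_2=2$ in all three cases and to apply the correct branch of Theorem~\ref{Th gamma} — the $1/2\le\gamma<1$ branch for the first estimate and the $\gamma\ge1$ branch (carrying the extra $1/2$) for the other two.
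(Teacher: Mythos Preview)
Your proposal is correct and matches the paper's approach exactly: the paper presents Corollary~\ref{Th:1/2-mom} as an immediate specialization of Theorem~\ref{Th gamma} without writing out a proof, and your derivation simply supplies the routine evaluation of the constants $R_{\gamma,d_1+d_2}$ via~\eqref{class} in the three stated cases, correctly distinguishing the $1/2\le\gamma<1$ and $\gamma\ge1$ branches.
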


\subsection*{Acknowledgements}\label{SS:Acknow}
A.I. and S.Z.  acknowledge financial support
from the Russian Science Foundation (grant no. 14-21-00025) and M.L.'s research is
supported in part by US NSF grant DMS - 1301555.

\end{document}